\documentclass[reqno,a4paper,11pt]{amsart}

\usepackage{enumitem}

\setlist[enumerate, 1]{label = \roman*., ref = \roman*}
\setlist[enumerate, 2]{label = \theenumi.\alph*}

\usepackage{xcolor, comment}
\usepackage{float}
\usepackage{amsmath, amscd, amssymb, amsfonts, euscript, mathtools,stmaryrd}
\usepackage{amsthm}
\numberwithin{equation}{section}
\usepackage{graphicx}

\usepackage{diagbox}
\usepackage{epigraph}

\usepackage[noadjust]{cite}
\usepackage[
pagebackref=true,
colorlinks=true,
urlcolor=purple,
linkcolor=purple!87!black,
citecolor=green!60!black,
pdfborder={0 0 0}
]{hyperref}
\renewcommand*{\backref}[1]{}
\renewcommand*{\backrefalt}[4]{[{\tiny%
		\ifcase #1 Not cited.%
		\or Cited on page~#2.%
		\else Cited on pages #2.%
		\fi%
	}]}
\usepackage{cleveref}

\usepackage{url}
\hypersetup{breaklinks=true}
\usepackage{geometry}
\DeclarePairedDelimiter\norm{\lVert}{\rVert}
\DeclarePairedDelimiter\abs{\lvert}{\rvert}
\makeatletter
\let\oldabs\abs
\def\abs{\@ifstar{\oldabs}{\oldabs*}}
\let\oldnorm\norm
\def\norm{\@ifstar{\oldnorm}{\oldnorm*}}
\makeatother

\newcommand{\dd}{\mathop{}\!\mathrm{d}}

\mathtoolsset{centercolon}

\def\multiset#1#2{\ensuremath{\left(\kern-.3em\left(\genfrac{}{}{0pt}{}{#1}{#2}\right)\kern-.3em\right)}}

\newcommand\restr[2]{{
		\left.\kern-\nulldelimiterspace 
		#1 
		\vphantom{\big|} 
		\right|_{#2} 
}}

\def\C{{\mathbb C}}

\def\N{{\mathbb N}}

\def\P{{\mathbb P}}

\def\R{{\mathbb R}}

\def\Z{{\mathbb Z}}

\def\cO{{\mathcal O}}

\def\cT{{\mathcal T}}

\newcommand{\sub}[1]{\mathrm{Sub}(#1)}
\newcommand{\subam}[1]{\mathrm{Sub}_{\mathrm{am}}(#1)}

\let\epsilon\varepsilon
\let\emptyset\varnothing

\newtheorem{teo}{Theorem}[section]

\newtheorem{lema}[teo]{Lemma}
\newtheorem{defn}[teo]{Definition}
\newtheorem{prop}[teo]{Proposition}

\newtheorem{cor}[teo]{Corollary}

\newtheorem*{claim}{Claim}

\newcommand{\thistheoremname}{}

\newtheorem*{genericthm*}{\thistheoremname}
\newenvironment{namedthm*}[1]
{\renewcommand{\thistheoremname}{#1}%
	\begin{genericthm*}}
	{\end{genericthm*}}

\newtheorem{thmA}{Theorem}

\newtheorem{corA}[thmA]{Corollary}

\theoremstyle{definition}

\newtheorem*{ejm*}{Example}

\newtheorem{rmk}[teo]{Remark}
\newtheorem*{rmks}{Remarks}

\newtheorem{qnnum}{Question}

\setlist[enumerate, 1]{label = \roman*., ref = \roman*}
\setlist[enumerate, 2]{label = \theenumi.\alph*}

\usepackage{hyperref}
\usepackage{url}
\hypersetup{breaklinks=true}

\author{Anna Cascioli}
\email[Anna Cascioli]{acasciol@uni-muenster.de}
\address{University of Münster, Einsteinstrasse 62, Münster 48149, Germany}
\urladdr{\url{https://annacascioli.github.io/}}
\author{Martín Gilabert Vio}
\email[Martín Gilabert Vio]{gilabert@math.univ-lyon1.fr, martingilabertvio@gmail.com}
\address{Institut Camille Jordan, Université Claude Bernard Lyon 1, 43 boulevard du 11 novembre
	1918, 69622 Villeurbanne, France}
\urladdr{\url{https://martingilabertvio.github.io/}}
\author{Eduardo Silva}
\email[Eduardo Silva]{eduardo.silva@uni-muenster.de, edosilvamuller@gmail.com}
\address{University of Münster, Einsteinstrasse 62, Münster 48149, Germany}
\urladdr{\url{https://edoasd.github.io/eduardo_silva_math/}}

\title{Stationary boundaries on the space of amenable subgroups and C*-simplicity}

\begin{document}
	
	\begin{abstract}
		We give a sufficient condition for a countable group $G$ to possess a probability measure $\mu$ that admits a non-trivial $\mu$-boundary modeled in the space $\mathrm{Sub}_{\mathrm{am}}(G)$ of amenable subgroups of $G$. In particular, for such $\mu$ the space $\mathrm{Sub}_{\mathrm{am}}(G)$ is not uniquely $\mu$-stationary. This contrasts with a theorem of Hartman-Kalantar, which states that a countable group $G$ is C*-simple if and only if there exists $\mu\in \mathrm{Prob}(G)$ such that $\mathrm{Sub}_{\mathrm{am}}(G)$ is uniquely $\mu$-stationary \cite{HartmanKalantar2022}. Our criterion applies to (permutational) wreath products, which include groups that are C*-simple, and to Thompson's group $F$, whose C*-simplicity is equivalent to its non-amenability and therefore remains an open problem. We also show that any non-trivial $\mu$-boundary modeled on $\mathrm{Sub}_{\mathrm{am}}(G)$ is supported on amenable normalish subgroups, in the sense of Breuillard-Kalantar-Kennedy-Ozawa \cite{BreuillardKalantarKennedyOzawa2017}. As a consequence, we conclude that a countable group with no finite normal subgroups and no amenable normalish subgroups acts essentially freely on all its Poisson boundaries.
		
	\end{abstract}
	
	\maketitle
	
\section{Introduction}
The space $\sub{G}$ of subgroups of a discrete countable group $G$ is naturally identified with a closed subset of $\{0,1\}^G$ endowed with the product topology. The induced topology, known as the \emph{Chabauty topology}, makes $\sub{G}$ a compact metrizable space, and the conjugation action of $G$ on its subgroups induces an action by homeomorphisms $G \curvearrowright \sub{G}$. A central object in the study of this dynamical system is the class of $G$-invariant probability measures on $\sub{G}$, known as \emph{invariant random subgroups} (IRSs). Notable applications of IRSs include rigidity results such as the Stuck--Zimmer theorem \cite{StuckZimmer}, approximation results for the $L^2$-Betti numbers of semisimple Lie groups \cite{7samurai}, and the realization of entropy spectra of stationary actions of groups \cite{Bowen2015,HartmanYadin2018}.

The set $\mathrm{Sub}_\mathrm{am}(G)$ of amenable subgroups of $G$ is closed in $\mathrm{Sub}(G)$, and a reason to study the dynamics of $G \curvearrowright \subam{G}$ is its connection with the C*-simplicity of $G$. A countable group $G$ is said to be \emph{C*-simple} if its reduced C*-algebra $C^\ast_r(G)$ contains no non-trivial closed ideals, where the \emph{reduced C*-algebra $C_r^\ast(G)$ of $G$} is the norm closure of the linear span of the unitary operators defined by the left regular representation of $G$ on $\ell^2(G)$. This property has been well studied from the perspective of operator algebras and group theory \cite{Powers1975, BekkaCowlingDeLaHarpe1994}. A result of M. Kennedy \cite{Kennedy2020}, following his work with Breuillard-Kalantar-Ozawa \cite{KalantarKennedy2017, BreuillardKalantarKennedyOzawa2017}, states that a countable group $G$ is C*~-simple if and only if the only minimal closed invariant subset of $\subam{G}$ is the one composed of the trivial subgroup $\{e_G\}$.

The statement of a measurable version of the previous criterion involves stationary probability measures rather than invariant ones. Recall that a probability measure $\mu \in \mathrm{Prob}(G)$ is \emph{non-degenerate} if the semigroup generated by its support is all of $G$, and that a measurable action of $G$ on a probability space $(Z, \eta)$ is \emph{$\mu$-stationary} if $\eta = \sum_{g \in G} \mu(g)g_\ast \eta$. A $\mu$-stationary probability measure on $\subam{G}$ is called an \emph{amenable $\mu$-stationary random subgroup} (or \emph{amenable $\mu$-SRS)}. This notion turns out to be useful to characterize C*-simplicity of countable groups by work of Hartman-Kalantar \cite{HartmanKalantar2022}, who proved that a countable group $G$ is C*-simple if and only if there exists a non-degenerate probability measure $\mu \in \mathrm{Prob}(G)$ such that $\subam{G}$ admits a unique $\mu$-stationary probability measure (which must be $\delta_{\{e_G\}}$). In other words, C*-simple groups possess distinguished probability measures that witness their C*-simplicity. In this paper we address the existence of probability measures with the opposite property.

\begin{qnnum}\label{question: main question}
	Let $G$ be a countable group. Does there exist a non-degenerate probability measure $\mu$ on $G$ that admits an amenable $\mu$-SRS distinct from $\delta_{\{e_G\}}$?
\end{qnnum}
As observed in \cite{HartmanKalantar2022}, if $G$ is not C*-simple then every non-degenerate probability measure $\mu$ on $G$ has the above property. It follows that Question~\ref{question: main question} is of interest only when $G$ is C*-simple. Moreover, Hartman-Kalantar show that there are classes of C*-simple groups for which no probability measure $\mu$ admits an amenable $\mu$-SRS distinct from $\delta_{\{e_G\}}$. This holds for hyperbolic groups, mapping class groups and linear groups \cite[Theorems 4.12, 6.5 \& 6.7, Example 6.6]{HartmanKalantar2022}, assuming that their amenable radical (that is, their maximal normal amenable subgroup) is trivial. This discussion suggests a graded notion of C*-simplicity, where groups admitting no non-trivial amenable $\mu$-SRSs can be viewed as “more” C*-simple, in the sense that their stationary dynamics on $\mathrm{Sub}_\mathrm{am}(G)$ are highly constrained.

Our results deal with the following class of amenable SRSs.

\begin{defn}\label{def: boundary SRS}
	Let $\mu$ be a probability measure on a countable group $G$ and denote by $\P_{\mu}$ the law of the right $\mu$-random walk on $G$. A $\mu$-stationary probability measure $\eta$ on the space $\subam{G}$ of amenable subgroups of $G$ is called an \emph{amenable boundary $\mu$-stationary random subgroup} if, for $\P_{\mu}$-almost every trajectory $(w_n)_{n\ge 0}\in G^\N$, the sequence $\left(w_{n \ast} \eta\right)_{n\ge 0}$ converges in the weak*-topology to a Dirac mass in $\mathrm{Sub}_\mathrm{am}(G)$.
\end{defn}

Equivalently,  $\eta$ is an amenable boundary $\mu$-SRS if the space $(\subam{G},\eta)$ is a $\mu$-boundary, in the sense that it is a $G$-equivariant quotient of the Poisson boundary of $(G,\mu)$. Our first main theorem provides a condition that guarantees the existence of such a measure.

\begin{thmA}\label{thm: syndetic}
	Let $G$ be a countable group. Suppose that there exists a non-trivial subgroup $H$ such that for all finite subsets $Q, Z \subseteq G$ there is $b \in G$ such that both $bZ$ and $b^{-1}Z$ are contained in
	\[
	\left\{g \in G : Q \cap H = Q \cap gHg^{-1}\right\}.
	\]
	Then there exists a non-degenerate, symmetric and finite-entropy probability measure $\mu$ on $G$ such that $\overline{\mathrm{Orb}_G(H)}$ supports a $\mu$-boundary SRS distinct from $\delta_{\{e_G\}}$.
	
	In particular, if $H$ is amenable then $G$ admits an amenable boundary $\mu$-SRS distinct from $\delta_{\{e_G\}}$.
\end{thmA}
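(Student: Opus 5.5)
The plan is to construct $\mu$ explicitly as an infinite convex combination of symmetric measures supported on the elements $b_k^{\pm1}$ provided by the hypothesis, plus a small non-degenerate part. For this $\mu$, I would show that along $\P_\mu$-almost every trajectory the conjugates $w_nHw_n^{-1}$ converge in the Chabauty topology to a random subgroup $H_\infty$. The SRS is then the law of $H_\infty$. Write
\[
S(Q)=\{g\in G : Q\cap H=Q\cap gHg^{-1}\}.
\]
Everything concerns the Chabauty topology on $\overline{\mathrm{Orb}_G(H)}$.

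First, the general framework. Suppose that for $\P_\mu$-a.e. $(w_n)$ the limit $H_\infty=\lim_n w_nHw_n^{-1}$ exists. Since $w_n=s_1w_n'$ with $w_n'=s_2\cdots s_n$, we get $H_\infty=s_1H_\infty's_1^{-1}$, where $H_\infty'$ is the limit along the shifted walk. Hence $\eta=\mathrm{Law}(H_\infty)$ is $\mu$-stationary. It is also a $\mu$-boundary: $H_\infty$ is a shift-equivariant measurable function of the trajectory, so it factors through the Poisson boundary. Equivalently, the martingale convergence theorem gives $w_{n\ast}\eta\to\delta_{H_\infty}$. The support of $\eta$ lies in $\overline{\mathrm{Orb}_G(H)}$, and it is contained in $\subam{G}$ whenever $H$ is amenable, because $\subam{G}$ is closed and conjugation-invariant. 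So the whole task reduces to two points: (a) almost sure convergence, and (b) $\P_\mu(H_\infty\neq\{e_G\})>0$.

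Next, the construction. Fix a finite symmetric generating set $F$, an exhaustion $Q_1\subseteq Q_2\subseteq\cdots$ of $G$ by finite sets, and a nontrivial $h_0\in H\cap Q_1$. Inductively choose finite sets $Z_k$ and elements $b_k$ with $b_kZ_k\cup b_k^{-1}Z_k\subseteq S(Q_k)$. The set $Z_k$ should be large enough to contain, with probability at least $1-2^{-k}$, the product of all increments drawn from the previously defined part of the measure over a time window of suitable length. Then set
\[
\mu=\epsilon\,u_F+\sum_k p_k\tfrac12(\delta_{b_k}+\delta_{b_k^{-1}}),
\]
with $p_k$ decaying fast enough that $\sum_kp_k\log(1/p_k)<\infty$. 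This choice makes $\mu$ non-degenerate, symmetric and of finite entropy, since the $b_k$ may be taken distinct.

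The key mechanism is as follows. When a "$b_k$-step" occurs and the walk position before it lies in $Z_k$ (or in its inverse-side analogue), the conjugate $w_nHw_n^{-1}$ agrees with $H$ on $Q_k$. I would then argue via Borel--Cantelli that almost surely, for each fixed finite $Q$, the pattern $Q\cap w_nHw_n^{-1}$ is eventually constant. The idea is that the rare large steps $b_k$ occur with frequencies separated by scales, and the hypothesis that \emph{both} $b_kZ$ and $b_k^{-1}Z$ lie in $S(Q_k)$ lets one control both the step $b_k$ and its inverse. Non-triviality would come from showing that with positive probability $h_0\in H_\infty$, since the pattern is reset to that of $H$ on $Q_1\ni h_0$ infinitely often at progressively larger scales.

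The main obstacle is step (a), the actual almost sure Chabauty convergence, as opposed to mere recurrence of the pattern of $H$ on each $Q_k$. Between two large steps the small $F$-steps move $w_n$ around and change the conjugate. What I would try first is to choose the scales so that the $b_k$ dominate the geometry. The small steps contribute only a bounded perturbation at the left end, while stabilization on a given finite window is governed by the inductive choice of the $Z_k$. Should this fail to give genuine convergence, the fallback is to take a weak* limit point of the Cesàro averages $\frac1N\sum_{n<N}\mu^{*n}\ast\delta_H$. This is a $\mu$-stationary measure that gives positive mass to $\{K: h_0\in K\}$, by the recurrence estimate. I would then pass to its boundary part through the Furstenberg--Poisson correspondence, which still needs a separate argument that the boundary part is not $\delta_{\{e_G\}}$.
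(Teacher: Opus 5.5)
\emph{Verdict: genuine gap.} Your reduction to (a) almost sure Chabauty convergence of $w_nHw_n^{-1}$ and (b) non-triviality of the limit is correct, and the paper concludes the same way. But (a) is the whole content of the theorem, and the mechanism you propose for it is oriented the wrong way.

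Take the right walk $w_n=g_1\cdots g_n$ that you use in the stationarity argument. A large step at time $T$ gives $w_n=d\,b_k\,z$, where $d=w_{T-1}$ is the position \emph{before} the jump and $z=g_{T+1}\cdots g_n$ collects the steps \emph{after} it. Hence
\[
w_nHw_n^{-1}=d\,\bigl((b_kz)H(b_kz)^{-1}\bigr)\,d^{-1}.
\]
The hypothesis $b_kZ\subseteq S(Q)$ controls $b_kz$, i.e.\ the steps after the jump. What it yields is
\[
Q_k\cap w_nHw_n^{-1}=Q_k\cap dHd^{-1},
\]
provided it was applied to the conjugated window $Q=\bigcup_d d^{-1}Q_kd$ over all admissible $d$; you apply it to $Q_k$ itself. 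So the pattern is not reset to that of $H$: it is \emph{frozen} at the value it had just before the jump.

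Your claim that a jump from a position in $Z_k$ makes the conjugate agree with $H$ on $Q_k$ describes the left walk $w_T=b_kw_{T-1}$, which gives recurrence to $H$, not convergence. The picture is in fact self-contradictory. Suppose that for every $k$ the conjugate $w_nHw_n^{-1}$ agreed with $H$ on $Q_k$ for infinitely many $n$, and also converged. Then $H_\infty=H$ almost surely, and your identity $H_\infty=s_1H_\infty's_1^{-1}$ forces $H$ to be normalized by $\mathrm{supp}(\mu)$, hence normal. For the same reason, non-triviality cannot come from $h_0\in H$. You must instead choose the windows so that $Q_k\cap dHd^{-1}\neq\{e_G\}$ for every admissible pre-jump position $d$.

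The missing ingredient that makes the frozen-pattern mechanism work is the record-time structure. Give each step a level, with level distribution $p$ having eventually simple records (Lemma~\ref{lem: ev simple records}) and a gauge $\Phi$ with $T_{k+1}\le\Phi(R_k)$ (Lemma~\ref{lem: gauge}). Then, eventually:
\begin{itemize}
\item the step at each record time $T_k$ is $b_{R_k}^{\pm1}$;
\item every step strictly between $T_k$ and $T_{k+1}$ has level $<R_k$;
\item both $d=w_{T_k-1}$ and $z$ are words of length at most $\Phi(R_k)$ in elements of level $<R_k$.
\end{itemize}
This last set is finite and fixed \emph{before} $b_{R_k}$ is chosen. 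Choose $b_{R_k}$ by the hypothesis with this set as $Z$ and the conjugated window as $Q$. An induction over successive records then shows that $Q_{R_k}\cap w_mHw_m^{-1}$ is constant for all $m\ge T_k$, which is the convergence.

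Your construction provides none of this.
\begin{itemize}
\item Your only constraint on $(p_k)$ is finite entropy. Geometric decay satisfies it but violates $\sum_j(p_j/\sum_{i\ge j}p_i)^2<\infty$. Then ties at the maximal level occur infinitely often almost surely, so $z$ contains a second step of level $R_k$ that is not in $Z$.
\item Your ``time window'' for $Z_k$ ignores the past $d$. That past contains all earlier jumps, and its length must be bounded in terms of the current record.
\item A finite generating set $F$ need not exist, since $G$ is only assumed countable.
\end{itemize}

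Finally, the Cesàro fallback does not rescue the argument. A weak* limit of $\frac1N\sum_{n<N}\mu^{*n}\ast\delta_H$ is stationary but in general not a $\mu$-boundary. The Furstenberg correspondence only produces a boundary in $\mathrm{Prob}(\overline{\mathrm{Orb}_G(H)})$ via conditional measures, and these need not be Dirac masses. So this route does not produce a boundary SRS on $\overline{\mathrm{Orb}_G(H)}$.
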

The condition appearing in the statement can be viewed as a strong form of recurrence for the action of $G$ on the orbit of $H$ (see Remark \ref{remark: dynamical interpretation}). The strategy we use to construct $\mu$ is the method of record times used by Frisch-Hartman-Tamuz-Vahidi Ferdowsi \cite{FrischHartmanTamuzVahidiFerdowsi2019} to prove that any non-hyper-FC-central group admits non-degenerate probability measures with a non-trivial Poisson boundary.

In Proposition~\ref{prop: wreath products}, we also present a shorter and more direct argument for wreath products, which differs from the general proof of Theorem~\ref{thm: syndetic}.  Recall that if $A$ is a C*-simple group, then for every countable group $B$ the wreath product $A \wr B=\bigoplus_B A\rtimes B$ is C*-simple. Proposition~\ref{prop: wreath products} provides examples of C*-simple groups for which any probability measure witnessing C*-simplicity, as in the main theorems of \cite{HartmanKalantar2022} must necessarily have an infinite support (see Corollary \ref{cor: csimple infinite first moment}). To the best of our knowledge, these are the first examples of this kind. A. Erschler and J. Frisch communicated to us that they independently obtained Proposition \ref{prop: wreath products}.

The criterion in Theorem \ref{thm: syndetic} applies to Thompson's group $F$, the group of piecewise dyadically affine homeomorphisms of the interval.

\begin{corA} \label{cor: thompson F}
	There exists a non-degenerate, symmetric and finite-entropy probability measure $\mu$ on Thompson's group $F$ such that $\mathrm{Sub}_\mathrm{ab}(F) = \{H \in \mathrm{Sub}(F) : H \text{ is abelian}\}$ supports a non-atomic $\mu$-boundary.
\end{corA}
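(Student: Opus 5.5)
We apply Theorem~\ref{thm: syndetic} to Thompson's group $F$, realized as piecewise dyadic-affine homeomorphisms of $[0,1]$, with an abelian subgroup $H$ supported near a point. For concreteness, take $H = \langle h \rangle$, where $h$ is a non-trivial element of $F$ whose support is contained in a small dyadic interval $[a,c]\subset(0,1)$, say $[1/4,1/2]$. Then $H$ is abelian, hence amenable, and every conjugate $gHg^{-1}=\langle ghg^{-1}\rangle$ is cyclic. Since $\mathrm{Sub}_\mathrm{ab}(F)$ is closed in $\mathrm{Sub}(F)$, the closure $\overline{\mathrm{Orb}_F(H)}$ lies in $\mathrm{Sub}_\mathrm{ab}(F)$. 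Theorem~\ref{thm: syndetic} then yields a non-degenerate, symmetric, finite-entropy $\mu$ and a $\mu$-boundary SRS on $\mathrm{Sub}_\mathrm{ab}(F)$ distinct from $\delta_{\{e\}}$.

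The main step is verifying the hypothesis. Fix finite sets $Q,Z\subseteq F$, and let $\epsilon>0$ be such that every element of $Q\cup Z$ is the identity on $[0,\epsilon]$ and on $[1-\epsilon,1]$. For $g\in F$, the element $ghg^{-1}$ is supported in $g([a,c])$. If $g([a,c])\subseteq(0,\epsilon')$ for very small $\epsilon'$, then for each $q\in Q$ we have $q\in gHg^{-1}$ only if $q=g h^k g^{-1}$ for some $k$. For $q\neq e$ this forces $\mathrm{supp}(q)$ to meet $(0,\epsilon')$, which fails once $\epsilon'<\epsilon$. So $Q\cap gHg^{-1}\subseteq\{e\}$. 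The two intersections $Q\cap H$ and $Q\cap gHg^{-1}$ might still differ, because $Q\cap H$ can contain non-trivial powers of $h$. To avoid this, I would enlarge the hypothesis set by replacing $H$ with a conjugate $H_0$ supported in $(0,\delta)$, where $\delta$ is chosen after $Q$. This reordering is not allowed, however, since $H$ is fixed before $Q$.

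The cleaner route is to choose $g$ so that $gHg^{-1}$ agrees with $H$ on a large region. Since $h$ is supported in $[a,c]$, any $g$ that is the identity on a neighbourhood of $[a,c]$ satisfies $ghg^{-1}=h$, so $gHg^{-1}=H$ and the required equality holds trivially. It therefore suffices to find $b$ such that every $bz$ and every $b^{-1}z$ with $z\in Z$ commutes with $h$ in this sense, for instance by fixing $[a,c]$ pointwise. This is not possible for arbitrary $z$, because $z$ may move $[a,c]$. The remedy is to use the condition only on $Q$. We need $g$ with $Q\cap gHg^{-1}=Q\cap H$, and by the support argument it is enough that $g$ maps $[a,c]$ into a region $J$ that is disjoint from the supports of $Q\setminus\{e\}$ and that $Q\cap H=\{e\}$. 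The latter can be arranged by choosing $h$ supported inside $(0,\epsilon)$. I expect the bookkeeping here to be the delicate part.

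Here is the construction I intend. Take $h$ supported in $[1/4,1/2]$. Choose $b\in F$ acting near $0$ as a strong contraction toward $0$ and near $1$ as a strong contraction toward $1$, while $b^{-1}$ expands. Given $Q$ and $Z$, I would instead pick $b$ with $b$ and $b^{-1}$ mapping the finitely many intervals $z([1/4,1/2])$, $z\in Z$, either back onto regions where the conjugated element coincides with $h$ modulo $Q$-detection, or into $(1-\epsilon,1)$. The natural choice is a $b$ that is the identity on a large middle interval containing all $z([a,c])$, but then $bz([a,c])=z([a,c])$ and we are back to the original problem. So the actual key step is to treat $Q$-detection correctly. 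Two subgroups $H'$ and $H$ have the same intersection with $Q$ as long as, for each $q\in Q$, membership is decided by the local germ data that $q$ can see. The obstacle is making precise which choice of $b$ achieves this; I would try $b$ as a high power of a element that pushes $\mathrm{supp}(Q\cup Z)$-adjacent intervals toward the endpoints.

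Finally, non-atomicity. The boundary measure $\eta$ is non-trivial and $\mu$-proximal, so it cannot be a finite combination of atoms unless it is a single Dirac mass. An invariant Dirac mass $\delta_K$ would make $K$ normal, and since $K$ is abelian and $F$ has trivial amenable radical (every non-trivial normal subgroup contains $[F,F]$, which is non-abelian), $K$ would be trivial, contradicting $\eta\neq\delta_{\{e\}}$. An atomic $\mu$-boundary on a countable orbit would have to be a point mass, which gives non-atomicity.
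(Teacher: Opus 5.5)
\textbf{There is a genuine gap.} Your proposal never verifies the hypothesis of Theorem~\ref{thm: syndetic}. For your choice $H=\langle h\rangle$ that hypothesis is in fact false, so no better choice of $b$ can close the gap.

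Let $x_*$ be the left endpoint of $\mathrm{supp}(h)$. Near $x_*^+$ the map $h$ is $x\mapsto x_*+s(x-x_*)$ with $s$ a power of $2$, and $s\neq 1$ because points arbitrarily close to the right of $x_*$ are moved. Suppose $h=gh^kg^{-1}$ for some $g\in F$ and $k\in\mathbb{Z}$. Then $k\neq 0$, and $g$ maps the moved set of $h^k$ (which equals that of $h$) onto the moved set of $h$. Hence $g(x_*)=x_*$, and comparing right derivatives at $x_*$ gives $s=s^k$, so $k=1$. Thus $h\in gHg^{-1}$ if and only if $g\in C_F(h)$.

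Now take $Q=\{h\}$ and $Z=\{e,z\}$ with $z\notin C_F(h)$, e.g.\ any $z$ with $z(x_*)\neq x_*$. Since $Q\cap H=\{h\}$, the hypothesis requires $b,bz\in C_F(h)$. This forces $z\in C_F(h)$, a contradiction. Your fallback of making $Q\cap H=\{e\}$ by shrinking $\mathrm{supp}(h)$ runs into the quantifier problem you already noticed.

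\textbf{The missing idea} is to choose $H$ normalized by the elements you intend to use as $b$. Then conjugating $H$ by $bz$ reduces to conjugating by an element supported far from $Q$.

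The paper works in the model of $F$ acting on $\mathbb{R}$ by maps that are integer translations near $\pm\infty$; in your $[0,1]$ picture these translations correspond to the powers of the generator $x_0$. It fixes $f\neq e$ supported in a proper closed subinterval of $[0,1]$ and takes $H=\langle t_kft_k^{-1}:k\in\mathbb{Z}\rangle$. This $H$ is abelian (the supports are disjoint) and is normalized by every translation.

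Since $H$ and its conjugates lie in the normal subgroup $[F,F]$ of compactly supported elements, you may assume $Q\subseteq[F,F]$, with all supports in $[-N,N]$. Each $z\in Z$ equals a translation $x\mapsto x+m_z^{+}$ near $+\infty$. With $r_z^{+}$ the translation by $-m_z^{+}$, the element $zr_z^{+}$ is the identity on a half-line $[M',\infty)$, uniformly in $z\in Z$.

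Let $b$ be a translation to the left by more than $M'+N$. Since $r_z^{+}$ and $b$ normalize $H$, you get $bzHz^{-1}b^{-1}=cHc^{-1}$ with $c=b(zr_z^{+})b^{-1}$ supported in $(-\infty,-N)$. So $c$ commutes with every element of $Q$, and $Q\cap cHc^{-1}=Q\cap H$. The case of $b^{-1}z$ is symmetric, using $r_z^{-}$ and the behaviour of $z$ near $-\infty$; a single translation large enough for both sides works.

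Your non-atomicity argument is fine once the theorem applies. By Proposition~\ref{prop: invariant is delta}, any atom (not only a purely atomic $\eta$) forces $\eta=\delta_K$ with $K$ an abelian normal subgroup of $F$. Hence $K=\{e\}$, contradicting $\eta\neq\delta_{\{e\}}$.
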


\begin{rmks}
	\ 
	
	\begin{itemize}
		\item The minimal closed subsystems of $\mathrm{Sub}(F)$ and the IRSs of $F$ are supported on normal subgroups (see \cite[Theorem 1.7, (i)]{LeBoudecMatteBon2018} and \cite{DudkoMedynets2014} respectively), which are the subgroups of $F$ containing the (non-abelian) derived subgroup $[F,F]$. Corollary~\ref{cor: thompson F} shows that the dynamics of $\subam{F}$ are nevertheless rich enough to support non-trivial $\mu$-SRSs. A well-known open question in group theory asks whether $F$ is amenable \cite{CannonFloydParry1996}, which is equivalent to $F$ not being C*-simple by \cite[Theorem 1.6]{LeBoudecMatteBon2018}. If $F$ were non-amenable, it would follow that $F$ belongs to a class of C*-simple groups whose amenable subgroup dynamics are richer than those of hyperbolic groups or mapping class groups.
		\item The proof of Corollary \ref{cor: thompson F} applies verbatim to the finitely presented non-amenable group of piecewise projective homeomorphisms of $\mathbb{R}$ constructed by Lodha-Moore \cite{LodhaMoore2016}, which is known to be C*-simple \cite[Theorem 1.10]{LeBoudecMatteBon2018}.
	\end{itemize}
\end{rmks}

Recall that a subgroup $H$ of $G$ is called \emph{normalish} if $\bigcap_{z \in Z} zHz^{-1}$ is infinite for every finite subset $Z \subseteq G$. This notion was introduced in \cite[Section 6]{BreuillardKalantarKennedyOzawa2017}, where the absence of normalish subgroups is shown to imply C*-simplicity. Our second main theorem connects this concept to Question \ref{question: main question}.

\begin{thmA}\label{thm: normalish}
	Let $\mu$ be a non-degenerate probability measure on a countable group $G$ and let $\eta$ be a $\mu$-stationary probability measure on $\mathrm{Sub}(G)$. Suppose that $(\mathrm{Sub}(G),\eta)$ is a $\mu$-boundary of $G$, and that $\eta$ is not a Dirac mass on a finite normal subgroup of $G$. Then $\eta$ is supported on normalish subgroups of $G$. 
\end{thmA}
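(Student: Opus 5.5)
The plan is to use the defining property of a $\mu$-boundary, namely almost sure convergence to Dirac masses, to force all finite intersections of conjugates of an $\eta$-typical subgroup to be large. There are two steps: first rule out finite subgroups, then transfer largeness of the boundary point to the intersections $\bigcap_{z\in Z} zHz^{-1}$.

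\emph{Step 1: $\eta$-almost every subgroup is infinite.} The set $\mathcal F\subseteq \mathrm{Sub}(G)$ of finite subgroups is countable and $G$-invariant. A $\mu$-boundary is ergodic, so $\eta(\mathcal F)\in\{0,1\}$.

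Suppose $\eta(\mathcal F)=1$, so that $\eta$ is purely atomic. Let $m$ be the maximal mass of an atom, and let $A$ be the (finite, nonempty) set of atoms of mass $m$. Stationarity gives $m=\eta(\{x\})=\sum_g\mu(g)\,\eta(\{g^{-1}x\})$ for $x\in A$. Since every term is at most $m$ and $\mu$ is non-degenerate, $A$ is $G$-invariant, and by ergodicity $\eta$ is uniform on $A$. But $w_{n*}\eta$ is then uniform on the set $w_nA$ of the same cardinality, and it converges to a Dirac mass. Hence $|A|=1$, so $\eta=\delta_N$ for a $G$-fixed point $N$, that is, a finite normal subgroup. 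This is excluded by hypothesis, so $\eta(\mathcal F)=0$.

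\emph{Step 2: convergence is insensitive to right translation.} Let $x_\omega$ denote the limit point, so that $w_{n*}\eta\to\delta_{x_\omega}$ almost surely. Then $\eta$ is the law of $x_\omega$. By Step 1, $x_\omega$ is infinite almost surely.

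I will show that for every $g\in G$, almost surely $w_{n*}g_*\eta\to\delta_{x_\omega}$ as well. By non-degeneracy, $g$ lies in the support of $\mu^{*k}$ for some $k$, and
\[
w_{n*}\eta=\sum_h\mu^{*k}(h)\,(w_nh)_*\eta .
\]
The left-hand side tends to a Dirac mass, and a Dirac mass is an extreme point of the space of probability measures. It follows that every $(w_nh)_*\eta$ with $\mu^{*k}(h)>0$ tends to the same Dirac mass. This is the standard argument, which can also be phrased via the martingale convergence theorem.

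\emph{Step 3: intersections of conjugates are large.} Fix a finite set $Z\subseteq G$ and an integer $m\ge1$. On a full-measure set of trajectories, choose a finite $Q\subseteq G$ with $|x_\omega\cap Q|\ge m$; this is possible since $x_\omega$ is infinite. The cylinder $\{K:K\cap Q=x_\omega\cap Q\}$ is clopen. By Step 2, for each $z\in Z$ its $(w_nz)_*\eta$-measure tends to $1$.

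Intersecting over the finitely many $z\in Z$, the $\eta$-measure of the set of $H$ such that
\[
w_nzHz^{-1}w_n^{-1}\supseteq x_\omega\cap Q \quad\text{for all } z\in Z
\]
tends to $1$. Every such $H$ satisfies
\[
\Bigl|\bigcap_{z\in Z}zHz^{-1}\Bigr|\ge |w_n^{-1}(x_\omega\cap Q)w_n|\ge m .
\]
Therefore $\eta\{H:|\bigcap_{z\in Z}zHz^{-1}|\ge m\}\ge 1-\varepsilon$ for every $\varepsilon>0$, so this set has full measure.

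Intersecting over the countably many pairs $(Z,m)$ shows that $\eta$-almost every $H$ is normalish.

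The main obstacle is Step 2: one must simultaneously control $(w_nz)_*\eta$ for all $z\in Z$ with the same limit point. Once that uniformity is in place, the cylinder-set estimate in Step 3 is routine.
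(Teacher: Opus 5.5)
Your proof is correct, and it takes a genuinely different, more elementary route than the paper.

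The paper first disposes of the atomic case by Proposition~\ref{prop: invariant is delta}. It then shows that $\eta$ is not carried by $\mathrm{Sub}(\mathrm{FC}(G))$: the action there is an odometer, hence equicontinuous, so by Furstenberg--Glasner every stationary measure on it is invariant, and an invariant $\mu$-boundary is a Dirac mass. This produces one element $h$ of infinite conjugacy class with $\eta(\{H : h\in H\})>0$. Along a trajectory whose limit contains $h$, the paper extracts infinitely many \emph{distinct} conjugates $w_{n_k}^{-1}hw_{n_k}$ with $\eta(\{H : w_{n_k}^{-1}hw_{n_k}\in H\})\ge 1-\delta_k$. It passes from $H$ to $g_iHg_i^{-1}$ by a quantitative non-singularity estimate and sums the errors $\varepsilon/2^k$. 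The upshot is that all these conjugates lie in $\bigcap_i g_iHg_i^{-1}$ outside a set of measure at most $\varepsilon$.

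Your argument differs at each of these points:
\begin{itemize}
\item You draw the many elements from the limit subgroup $x_\omega$ itself. This only requires that $\eta$-almost every subgroup be infinite, and you handle the countable invariant set of finite subgroups exactly as in Proposition~\ref{prop: invariant is delta}.
\item You replace non-singularity by the standard fact that $(w_nz)_*\eta\to\delta_{x_\omega}$ for each fixed $z$. Your extreme-point justification works once combined with compactness and passage to subsequences.
\item You trade the infinite sequence of conjugates for a fixed finite $m$, followed by a countable intersection over the pairs $(Z,m)$.
\end{itemize}

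What you gain is that you bypass the FC-center claim and the external results on odometers and equicontinuous systems entirely. You also avoid having to check that $\{w_n^{-1}hw_n\}$ is infinite along the chosen trajectory. The paper's version yields the slightly more specific information that the intersections contain infinitely many conjugates of a single element. For the statement at hand, your argument suffices and is shorter.
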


As a consequence, we obtain the next result.
\begin{corA}\label{cor: essentially free}
	Let $G$ be a countable group with no finite normal subgroups and no amenable normalish subgroups. Then for every non-degenerate $\mu \in \mathrm{Prob}(G)$, the action of $G$ on the Poisson boundary of $(G, \mu)$ is essentially free.
\end{corA}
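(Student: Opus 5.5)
The plan is to derive Corollary~\ref{cor: essentially free} from Theorem~\ref{thm: normalish} by applying it to the stabilizer map. Fix a non-degenerate $\mu$ and let $(B,\nu)$ be the Poisson boundary of $(G,\mu)$. Consider the measurable $G$-equivariant map
\[
\mathrm{Stab}\colon B \to \sub{G},\qquad \xi \mapsto G_\xi=\{g\in G: g\xi=\xi\},
\]
and let $\eta=\mathrm{Stab}_\ast\nu$. The conjugation equivariance $G_{g\xi}=gG_\xi g^{-1}$ shows that $\eta$ is $\mu$-stationary. Since $(\sub{G},\eta)$ is a $G$-equivariant measurable quotient of $(B,\nu)$, it is a $\mu$-boundary. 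Measurability of $\mathrm{Stab}$ is routine for a countable group acting measurably, since for each $g$ the set $\{\xi: g\xi=\xi\}$ is measurable, or one can pass to a compact model of $B$.

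Next, I show that $\eta$ is supported on amenable subgroups. Poisson boundary actions are amenable in Zimmer's sense, so almost every stabilizer is amenable. This is the standard consequence that stabilizers of an amenable action are amenable, and it is the fact used by Hartman--Kalantar to view stabilizer measures of boundaries as amenable SRSs. Hence $\eta(\subam{G})=1$.

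Now I apply Theorem~\ref{thm: normalish}. There are two alternatives. In the first, $\eta$ is a Dirac mass $\delta_N$ on a finite normal subgroup $N$. By hypothesis $G$ has no finite normal subgroups; if that includes $N=\{e_G\}$, this alternative is excluded outright. Otherwise the only remaining possibility is $N=\{e_G\}$, in which case $\eta=\delta_{\{e_G\}}$ and the action is essentially free, as desired. In the second alternative, $\eta$ is supported on normalish subgroups. Combined with the previous paragraph, $\eta$-almost every subgroup is then amenable and normalish. The hypothesis says there are none, so the set of amenable normalish subgroups is empty and cannot carry a probability measure, a contradiction. Therefore $\eta=\delta_{\{e_G\}}$, meaning $G_\xi$ is trivial for $\nu$-a.e.\ $\xi$, which is essential freeness.

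The only nontrivial input beyond Theorem~\ref{thm: normalish} is the amenability of almost every stabilizer of the Poisson boundary action. I would cite Zimmer's amenability of the Poisson boundary, together with the fact that restricting an amenable action to the stabilizer of a point gives amenability of that stabilizer, which holds almost everywhere. Care is needed with the interpretation of ``no finite normal subgroups'': it should mean no non-trivial ones, and the trivial case then yields exactly the desired conclusion.
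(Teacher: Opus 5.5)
Your proposal is correct and follows the paper's argument: you push the harmonic measure forward under the stabilizer map, use Zimmer's amenability of the Poisson boundary action together with the Elliot--Giordano fact that stabilizers of an amenable action are almost surely amenable to get an amenable boundary $\mu$-SRS, and conclude with Theorem~\ref{thm: normalish}. Your remark that ``no finite normal subgroups'' must be read as ``no non-trivial finite normal subgroups'' is the convention the paper uses implicitly, and it is what makes the remaining alternative $\eta=\delta_{\{e_G\}}$ the desired conclusion.
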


Indeed, the action of a countable group G on its Poisson boundary is amenable \cite[Corollary 5.3]{Zimmer1978}; see also \cite[Theorem 5.2]{Kaimanovich2005} for a probabilistic proof. Elliot-Giordano showed that the point stabilizers of an amenable action are almost surely amenable \cite[Proposition 1.2]{ElliotGiordano1993}, so that the stabilizer map $z \mapsto \mathrm{Stab}(z)$ from the Poisson boundary of $(G, \mu)$ to $\mathrm{Sub}(G)$ takes values in $\mathrm{Sub}_\mathrm{am}(G)$ (see also \cite[Corollary 5.3.33]{AR2000}). The stabilizer map thus defines an amenable boundary $\mu$-SRS on $G$. If the action of $G$ on the Poisson boundary of $(G, \mu)$ were not essentially free, Theorem \ref{thm: normalish} would show the existence of an amenable normalish subgroup of $G$, contradicting the hypotheses of the corollary.

Examples of groups with no amenable normalish subgroups include groups with a positive $L^2$-Betti number \cite[Corollary 1.5]{BaderFurmanSauer2014}, groups with some non-trivial bounded cohomology with mixing coefficients, and linear groups with a finite amenable radical \cite[Propositions 6.3 \& 6.4]{BreuillardKalantarKennedyOzawa2017}. These hold for
\begin{itemize}
	\item  countable groups admitting non-elementary, isometric and proper actions on simplicial trees, proper $\mathrm{CAT}(-1)$ spaces or Gromov-hyperbolic graphs of bounded valency \cite[Theorem 1.3]{MonodShalom2006}, \cite[Corollaries 7.6 \& 7.8, Theorem 7.13]{MonodShalom2004}, \cite[Theorem 3]{MineyevMonodShalom2004},
	\item  acylindrically hyperbolic groups \cite[Corollary 1.5]{Osin2016}, and
	\item countable groups admitting a non-elementary, metrically proper and essential action on a finite-dimensional irreducible CAT(0) cube complex \cite[Corollary 1.8]{ChatterjiFernosIozzi2016}.
\end{itemize}

\begin{rmks}
	\ 
	\begin{itemize}
		\item Theorem~\ref{thm: normalish} should be compared with the results of \cite[Section~6]{HartmanKalantar2022}, where several classes of groups are shown to admit no probability measure $\mu$ with a non-trivial amenable $\mu$-SRS. Their conclusions are stronger, as they rule out the existence of any non-trivial $\mu$-stationary measure on $\mathrm{Sub}_\mathrm{am}(G)$, including those that may not be a $\mu$-boundary. By contrast, the class of groups covered by Theorem~\ref{thm: normalish} is broader, and the proof is based on a unified argument that applies uniformly to all groups appearing in the statement.
		\item Following \cite[Theorem 5.4]{HartmanKalantar2022}, A. Alpeev showed that a group $G$ is C*-simple if and only if, for a generic probability measure on $G$ (in the Baire category sense), the group $G$ acts essentially freely on the associated Poisson boundary \cite[Theorem B]{Alpeev2025}. Corollary \ref{cor: essentially free} provides explicit hypotheses under which every probability measure has this property, rather than just a Baire-generic subset.
	\end{itemize}
\end{rmks}

Finally, in Section~\ref{sec:normalish} we observe that one cannot expect a converse to Theorem~\ref{thm: normalish}. More precisely, we show that many Baumslag-Solitar groups have amenable normalish subgroups but their space of amenable subgroups is countable.  Thus, they admit no non-trivial amenable $\mu$-SRSs for any non-degenerate $\mu$, see Corollary \ref{cor:noSRSs}. This is the case in particular for $\mathrm{BS}(2,3)=\langle a,t \mid t a^2 t^{-1}=a^3\rangle$. The proof uses an unpublished result of Y. Cornulier \cite{CornulierMSE2021}.

\subsection{Organization}
In Section \ref{section: preliminariesBoundaries} we recall basic properties of boundary actions of groups as well as their connections with C*-simplicity. We also discuss our results and further questions. Section \ref{sec:lampBoundaries} deals with proofs showing the existence of boundary SRSs and in particular with the proof of Theorem \ref{thm: syndetic}. Finally, in Section \ref{sec:normalish} we prove Theorem \ref{thm: normalish} and show that many Baumslag-Solitar groups have amenable normalish subgroups but no amenable $\mu$-SRSs.

\subsection*{Acknowledgments}
We thank Yair Hartman, Mehrdad Kalantar and Nicolás Matte Bon for their feedback during various stages of this project. Anna Cascioli and Eduardo Silva thank Sam Mellick for useful conversations and acknowledge the support of the Deutsche Forschungsgemeinschaft (DFG, German Research Foundation) under Germany's Excellence Strategy EXC 2044/2 –390685587, Mathematics Münster: Dynamics–Geometry–Structure. Martín Gilabert Vio thanks Sasha Bontemps, Damien Gaboriau and Adrien Le Boudec for useful conversations and acknowledges support from the ECOS project 23003 “Small spaces under action”.

\section{Background and discussion}\label{section: preliminariesBoundaries}

We give some background on random walks, stationary boundaries and the space of subgroups of a group. We refer to \cite{Furman2002, LeBoudecMatteBon2018} for more details on this material. We also discuss C*-simple measures and further questions. In this section $G$ is always a countable group.

\subsection*{Random walks and stationary spaces}
Let $\mu$ be a probability measure on $G$. The \emph{Shannon entropy} of $\mu$ is the non-negative quantity $H(\mu) = -\sum_{g \in G} \mu(g) \log \mu(g)$. We denote by $\P_\mu$ the law of the right random walk $\mathbf{w} = (w_n)_{n \geq 0} \in G^\N$ defined by $\mu$, that is, the image of the Bernoulli measure $\mu^\N$ through the map
\begin{align*}
	G^\N & \to G^\N \\
	(g_n)_{n \geq 0}&\mapsto (w_n)_{n \geq 0}
\end{align*}
where $w_n = g_1\cdots g_n$ for each $n\ge 1$. The space $G^\N$ is endowed with a left action of $G$ by multiplication, so that $g(w_n)_{n \geq 0} = (gw_n)_{n \geq 0}$ for all $g \in G$ and $(w_n)_{n \geq 0} \in G^\N$.

Let $X$ be a compact metric space on which $G$ acts by homeomorphisms. A Borel probability measure $\eta$ on $X$ is \emph{$\mu$-stationary} if $\eta = \sum_{g \in G}\mu(g)g_\ast \eta$. The martingale convergence theorem ensures that $\P_{\mu}$-almost surely the limit $\lim_{n \to \infty} (w_n)_\ast \eta = \eta_\mathbf{w}$ exists in $\mathrm{Prob}(X)$. When $\eta_\mathbf{w}$ is $\P_{\mu}$-almost surely a Dirac mass, we call the probability space $(X, \eta)$ a \emph{$\mu$-boundary} of $G$. In this case, we obtain a $G$-equivariant and $S$-invariant map $(G^\N, \P_\mu) \to (X, \eta)$, where $S \colon G^\N \to G^\N$ is the shift map $S((w_n)_{n \geq 0}) = (w_{n+1})_{n \geq 0}$. Conversely, if there is $x \in X$ such that $\P_\mu$-almost surely $\lim_{n \to \infty} w_nx$ exists in $X$, the distribution of $\lim_{n \to \infty} w_nx$ is $\mu$-stationary and defines a $\mu$-boundary on $X$. The literature sometimes refers to $\mu$-boundaries as \emph{$\mu$-proximal systems} \cite{FurstenbergGlasner2010}.

By abuse of notation, a probability space $(X,\eta)$ where $G$ acts by nonsingular measurable transformations is also called a $\mu$-boundary if it is $\mu$-stationary and admits a compact metrizable model which is a $\mu$-boundary in the above sense. The \emph{Poisson boundary} of $(G,\mu)$ is the maximal $\mu$-boundary of $G$, in the sense that any other such space is a $G$-equivariant quotient of it. It is uniquely defined up to $G$-equivariant measurable isomorphisms.

We will use the following well-known result, which is a consequence of the ergodicity of $\mu$-boundaries.
\begin{prop}\label{prop: invariant is delta}
	Let $\mu$ be a probability measure on a countable group $G$ acting by homeomorphisms on a compact metric space $X$. Let $(X,\eta)$ be a $\mu$-boundary of $G$. If either $\eta$ is $G$-invariant or if $\eta$ has atoms, then $\eta$ is a Dirac mass on a point of $X$ which is fixed by the action of $G$.
\end{prop}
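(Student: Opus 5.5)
The plan is to use the random-walk characterization of a $\mu$-boundary: $\P_\mu$-almost surely $(w_n)_\ast\eta \to \delta_{x_\mathbf{w}}$ for some point $x_\mathbf{w}\in X$. Stationarity gives the barycenter formula
\[
\eta = \int_{G^\N} \delta_{x_\mathbf{w}} \dd\P_\mu(\mathbf{w}),
\]
so $\eta$ is the distribution of the limit point $x_\mathbf{w}$. This holds by the martingale convergence theorem: $\int (w_n)_\ast\eta \dd\P_\mu = \sum_g \mu^{\ast n}(g) g_\ast\eta = \eta$ for every $n$, and we pass to the limit using bounded convergence for continuous test functions. Both cases will reduce to showing that $\eta$ is a Dirac mass, after which $G$-fixedness follows directly.

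\textbf{Invariant case.} If $\eta$ is $G$-invariant, then $(w_n)_\ast\eta=\eta$ for every $n$, so the limit $\eta_\mathbf{w}$ equals $\eta$ itself. Since $\eta_\mathbf{w}$ is almost surely a Dirac mass, $\eta=\delta_x$ for some $x\in X$. Invariance then gives $\delta_{gx}=g_\ast\delta_x=\delta_x$ for every $g\in G$, so $x$ is a fixed point.

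\textbf{Atomic case.} Suppose $\eta$ has an atom, and let $m=\max_{x}\eta(\{x\})>0$; the maximum is attained because only finitely many atoms have mass above any fixed threshold. Let $A=\{x:\eta(\{x\})=m\}$, which is a finite nonempty set. The key step is to show that $A$ is essentially invariant under the support of the walk. Write $f(x)=\eta(\{x\})$. Stationarity gives, for every $x$,
\[
f(x)=\sum_g \mu(g) f(g^{-1}x).
\]
This says $f$ is $\check\mu$-harmonic along orbits. For $x\in A$, the right-hand side is an average of values that are each at most $m$ and equals $m$, so every term with $\mu(g)>0$ satisfies $f(g^{-1}x)=m$. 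Hence $g^{-1}A\subseteq A$ for all $g\in\mathrm{supp}\,\mu$, and since $A$ is finite and $g^{-1}$ is injective, in fact $g^{-1}A=A$. So $A$ is invariant under the semigroup generated by $\mathrm{supp}\,\mu$, which contains every $w_n$.

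Now, for every $n$, $(w_n)_\ast\eta(A)=\eta(w_n^{-1}A)=\eta(A)=m|A|$. Because $A$ is finite and therefore closed, weak* upper semicontinuity on closed sets gives $\delta_{x_\mathbf{w}}(A)\ge m|A|>0$, hence $x_\mathbf{w}\in A$ almost surely. By the barycenter formula, $\eta$ is then supported on $A$, and $\eta$ is uniform on $A$ with total mass $m|A|=1$. The main obstacle is to rule out $|A|\ge2$. Since $\eta$ is uniform on the finite set $A$ and $A$ is invariant under every $w_n$, each $(w_n)_\ast\eta$ is again uniform on $A$. Such a sequence cannot converge to a Dirac mass unless $|A|=1$. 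Therefore $\eta=\delta_x$.

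\textbf{Fixedness.} Finally, with $\eta=\delta_x$, stationarity gives $\delta_x=\sum_g\mu(g)\delta_{gx}$, so $gx=x$ for every $g\in\mathrm{supp}\,\mu$. If $\mu$ is non-degenerate, this extends to all of $G$, and $x$ is a $G$-fixed point. If the statement is meant without non-degeneracy, $x$ is only fixed by the semigroup generated by the support, and I would note that non-degeneracy is implicitly assumed there, as it is throughout the paper. The invariant case needs no such assumption.
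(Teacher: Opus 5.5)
Your proof is correct. In the invariant case it is the paper's argument. In the atomic case the two proofs share the first step, the finite set $A$ of atoms of maximal mass, but then differ.

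The paper implicitly uses non-degeneracy to say that $A$ contains a finite $G$-orbit $\cO$. It then cites the ergodicity of $\mu$-boundaries (Jaworski) to get $\eta(\cO)=1$, concludes that $\eta$ is $G$-invariant, and falls back on the first case.

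You instead get $\eta(A)=1$ directly from the boundary property:
\begin{itemize}
\item the maximum principle gives $g^{-1}A=A$ for $g\in\mathrm{supp}\,\mu$, so $(w_n)_\ast\eta(A)=m|A|$ for every $n$;
\item the portmanteau inequality for the closed set $A$ forces the limit point $x_{\mathbf w}$ into $A$ almost surely;
\item the barycenter formula $\eta=\int\delta_{x_{\mathbf w}}\dd\P_\mu$ transfers this to $\eta$;
\item since $\eta$ is uniform on $A$ and $w_nA=A$, you have $(w_n)_\ast\eta=\eta$ along the walk, which forces $|A|=1$.
\end{itemize}
In effect you prove by hand the instance of ergodicity that the paper cites. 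This makes your argument self-contained, and it shows that the conclusion ``$\eta$ is a Dirac mass'' needs no non-degeneracy.

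Your caveat about the fixed-point conclusion is also justified. For $\mu=\delta_{e_G}$, every $\delta_x$ is a $\mu$-boundary, so the statement as written tacitly needs $\mathrm{supp}\,\mu$ to generate $G$. The paper's step ``a finite $G$-orbit'' uses this silently too. The assumption does hold wherever the paper applies the proposition, since $\mu$ is non-degenerate there.
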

\begin{proof}
	Suppose first that $\eta$ is $G$-invariant. Since for $\P_{\mu}$-almost every $\mathbf{w}=(w_n)_{n\ge 0}$ there exists an element $\xi(\mathbf{w}) \in X$ such that $\lim_{n \to \infty}(w_n)_\ast \eta = \delta_{\xi(\mathbf{w})}$, we deduce that $\eta$ is a Dirac mass on a fixed point of the action of $G$ on $X$.
	
	Now suppose that $\eta$ has atoms. By looking at points in $X$ with maximal $\eta$-measure, we deduce that $\eta$ gives positive mass to a finite $G$-orbit $\cO$. The ergodicity of $\eta$ (see \cite[Section 2]{Jaworski1994}, for instance) implies that it gives full mass to $\cO$ and is thus invariant. By the previous case, $\eta$ must be a Dirac mass on a fixed point.
\end{proof}

\subsection*{Hartman-Kalantar's characterizations of C*-simplicity}
Denote the left-regular representation of the group $G$ by $\lambda:G\to \mathcal{U}(\ell^2(G))$. Recall that a \emph{state} on the reduced C*-algebra $C_r^*(G)$ is a positive linear functional $\rho:C_r^*(G)\to \mathbb{C}$ such that $\rho(\lambda_{e_G})=1$. The \emph{canonical trace} $\tau_0$ is the state on $C_r^*(G)$ that satisfies $\tau_0(\lambda_g)=0$ for all $g\in G\setminus \{e_G\}$. The group $G$ acts on a state $\rho$ as $(g\rho)(a)=\rho(\lambda_{g^{-1}ag})$ for each $g\in G$ and, given a probability measure $\mu$ on $G$, the state $\rho$ is called $\mu$-stationary if $\rho=\sum_{g\in G}\mu(g) g\rho$. Since the canonical trace is invariant under the action of $G$, it is in particular $\mu$-stationary for every $\mu\in \mathrm{Prob}(G)$.

Hartman-Kalantar define a \emph{C*-simple measure} as a probability measure $\mu$ on $G$ such that the canonical trace is the unique $\mu$-stationary state on $C_r^*(G)$ \cite[Definition at the top of page 4]{HartmanKalantar2022}. Their first main theorem states that a countable group $G$ is C*-simple if and only if $G$ possesses a C*-simple probability measure \cite[Theorem 5.2]{HartmanKalantar2022}. Their second main theorem shows that a countable group $G$ is C*-simple if and only if $G$ admits a probability measure $\mu$ for which the space $\subam{G}$ of amenable subgroups is uniquely $\mu$-stationary \cite[Corollary 5.7]{HartmanKalantar2022}. The following implication between these two notions of unique stationarity is proved and used in \cite{HartmanKalantar2022}.

\begin{prop}\label{prop: implication c simple}
	Let $\mu$ be a probability measure on a countable group $G$. Suppose that $\subam{G}$ admits a $\mu$-SRS distinct from $\delta_{\{e_G\}}$. Then the space of states on the reduced C*-algebra $C_r^{*}(G)$ is not uniquely $\mu$-stationary.
\end{prop}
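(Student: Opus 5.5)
Given a $\mu$-stationary probability measure $\eta$ on $\subam{G}$ with $\eta\neq\delta_{\{e_G\}}$, we will build a $\mu$-stationary state on $C_r^*(G)$ different from the canonical trace $\tau_0$. The natural candidate is the average of the quasi-regular states attached to the subgroups. For an amenable subgroup $H\le G$, let $\tau_H$ be the positive definite function $\mathbf{1}_H$ on $G$, that is, $\tau_H(\lambda_g)=1$ if $g\in H$ and $0$ otherwise. Since $H$ is amenable, the quasi-regular representation $\lambda_{G/H}$ is weakly contained in $\lambda$, and $\mathbf{1}_H$ is its diagonal coefficient at the coset $eH$. Hence $\tau_H$ extends to a state on $C_r^*(G)$. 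Note that $\tau_{\{e_G\}}=\tau_0$.

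Next we check equivariance. We compute
\[
(g\tau_H)(\lambda_x)=\tau_H(\lambda_{g^{-1}xg})=\mathbf{1}_H(g^{-1}xg)=\mathbf{1}_{gHg^{-1}}(x),
\]
so $g\tau_H=\tau_{gHg^{-1}}$. Thus the map $H\mapsto\tau_H$ from $\subam{G}$ to the state space $S(C_r^*(G))$ is $G$-equivariant. It is also continuous from the Chabauty topology to the weak* topology. Indeed, for each $x$ the function $H\mapsto \mathbf{1}_H(x)$ is continuous, so $H\mapsto\tau_H(a)$ is continuous for $a$ in the span of the $\lambda_x$. Since states have norm one, this extends to all of $C_r^*(G)$ by density.

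We then define $\rho(a)=\int_{\subam{G}}\tau_H(a)\,\dd\eta(H)$. This is a barycenter of states, so it is a state. By equivariance and the stationarity of $\eta$,
\[
\sum_g\mu(g)\,g\rho=\int\tau_H\,\dd\Big(\sum_g\mu(g)g_\ast\eta\Big)(H)=\rho,
\]
so $\rho$ is $\mu$-stationary.

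Finally we show $\rho\neq\tau_0$. Since $\eta\neq\delta_{\{e_G\}}$, the set $\{H\neq\{e_G\}\}=\bigcup_{x\neq e_G}\{H: x\in H\}$ has positive $\eta$-measure. As this is a countable union, some $x\neq e_G$ satisfies $\eta(\{H:x\in H\})>0$. Then $\rho(\lambda_x)=\eta(\{H: x\in H\})>0=\tau_0(\lambda_x)$.

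The only step that is not routine is the extension of $\tau_H$ to $C_r^*(G)$. This is exactly where the amenability of $H$ is used, and we will invoke it as the standard fact that $\lambda_{G/H}\prec\lambda_G$ when $H$ is amenable. An equivalent formulation is that $\mathbf{1}_H$ is a pointwise limit of positive definite functions of the form $\langle\lambda_x\xi_n,\xi_n\rangle$, where $\xi_n$ are normalized Følner-type vectors built from $H$.
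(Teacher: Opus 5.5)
Your proof is correct and is essentially the paper's argument. The paper defines the same state $\rho(\lambda_g)=\eta(\{H\in\subam{G} : g\in H\})$ and cites Hartman--Kalantar's Lemma~2.3 for its extension to $C_r^*(G)$, whereas you justify that extension directly by integrating the states $\tau_H$ obtained from $\lambda_{G/H}\prec\lambda$; stationarity and $\rho\neq\tau_0$ are then checked just as you do. You should also state explicitly that $\tau_0=\tau_{\{e_G\}}$ is $G$-invariant (by your equivariance computation), hence $\mu$-stationary, since this is what turns a stationary state different from $\tau_0$ into non-uniqueness.
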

\begin{proof}
	Let $\eta$ be an amenable $\mu$-SRS distinct from $\delta_{\{e_G\}}$. By \cite[Lemma 2.3]{HartmanKalantar2022}, the map
	\[
	\rho(\lambda_g)\coloneqq \eta\left( \left\{H\in \subam{G} : g\in H\right\}\right), \ g\in G,
	\]
	extends to a state $\rho$ on $C_r^*(G)$. Furthermore, since $\eta$ is $\mu$-stationary and distinct from $\delta_{\{e_G\}}$, the state $\rho$ is $\mu$-stationary and it differs from the canonical trace.
\end{proof}
Our criterion in Theorem~\ref{thm: syndetic} for the existence of probability measures $\mu\in \mathrm{Prob}(G)$ such that $\subam{G}$ is not uniquely $\mu$-stationary therefore shows that these measures are not C*-simple in the sense of Hartman–Kalantar.
However, we remark that we do not know whether the converse to Proposition \ref{prop: implication c simple} is true. In principle, there may exist probability measures $\mu$ on $G$ that are not C*-simple even though $\subam{G}$ is uniquely $\mu$-stationary.

\subsection{SRSs that are not $\mu$-boundaries}
Let $\mu$ be a probability measure on a countable group~$G$. All results in this paper concern amenable $\mu$-stationary random subgroups for which the space $(\subam{G},\eta)$ is a $\mu$-boundary (see Definition~\ref{def: boundary SRS}).  It follows from work of H.~Furstenberg \cite[Theorem 14.1]{Furstenberg1972} and Glasner-Weiss \cite[Theorem 8.5]{GlasnerWeiss2016} that, if $G$ acts by homeomorphisms on a compact metric space $X$, then the following conditions are equivalent:
\begin{itemize}
	\item For every $\mu$-stationary probability measure $\nu$ on $X$, the space $(X,\nu)$ is a $\mu$-boundary.
	\item There exists a unique $\mu$-stationary probability measure $\nu$ on $X$, and the space $(X,\nu)$ is a $\mu$-boundary.
\end{itemize}

In particular, when $X=\subam{G}$, Question~\ref{question: main question} asks for probability measures $\mu$ on $G$ such that $\subam{G}$ is not uniquely $\mu$-stationary. The equivalence above shows that, in this setting, our methods cannot identify all amenable $\mu$-stationary random subgroups, since there will necessarily exist some that do not arise as $\mu$-boundaries of $G$. This leads to the natural question of whether Theorem~\ref{thm: normalish} extends to arbitrary stationary random subgroups of $G$.

\begin{qnnum}
	Let $\mu$ be non-degenerate probability measure on a countable group $G$, and let $\eta$ be an ergodic $\mu$-stationary probability measure on $\sub{G}$ that is not a point mass on a finite normal subgroup of $G$. 
	\begin{itemize}
		\item Is it true that $\eta$ must be supported on normalish subgroups of $G$?
		\item Suppose further that $\eta$ is supported on amenable subgroups of $G$. Is it true that all $L^2$-Betti numbers of $G$ must vanish?
	\end{itemize}
\end{qnnum}

We remark that if $\eta$ is an IRS of a countable group $G$ supported on subgroups whose $L^2$-Betti numbers all vanish, then it is known that the same conclusion holds for $G$. Indeed, as shown by Abert-Glasner-Virag, every IRS of $G$ arises as the image of a p.m.p.\ action of $G$ under the stabilizer map \cite[Proposition 13]{AbertGlasnerVirag}. This gives rise to the ``principal extension'' of groupoids in the language of \cite[Section 5.3]{SauerThom2010}. It follows from \cite[Lemma 5.1]{SauerThom2010} that the $L^2$-Betti numbers of the stabilizer groupoid vanish. Then \cite[Theorem 1.3]{SauerThom2010} shows that the $L^2$-Betti numbers of the groupoid of the action $G \curvearrowright (\sub{G}, \eta)$ vanish as well, and these coincide with the $L^2$-Betti numbers of $G$  (see \cite[Theorem 5.4]{CarderiGaboriauDeLaSalle2021} for instance). We are grateful to Sam Mellick and Damien Gaboriau for bringing this argument to our attention.

	\section{Existence of amenable boundary $\mu$-stationary random subgroups} \label{sec:lampBoundaries}
	
	This section presents results on the existence of non-trivial amenable boundary stationary random subgroups in countable groups. We begin with a short proof for wreath products (Proposition~\ref{prop: wreath products}), which also serves as motivation for the proof of Theorem~\ref{thm: syndetic}, where we establish a general criterion guaranteeing their existence. We then apply this criterion to Thompson's group $F$ (Corollary~\ref{cor: thompson F}) and to permutational wreath products (Corollary~\ref{cor: permutational wreath products}).

	\subsection{A short proof for wreath products}\label{subsection: wreath}
	Let $A, B$ be countable groups. Their wreath product is \(A \wr B = \bigoplus_B A \rtimes B,\)
	where $\bigoplus_B A$ denotes the group of finitely supported functions
	$\varphi : B \to A$, and the action of $B$ on $\bigoplus_B A$ is given by
	\[
	(x\cdot \varphi)(y) = \varphi(x^{-1}y), \qquad x,y\in B .
	\]
	Thus the multiplication in $A \wr B$ is
	\(
	(\varphi,b)(\psi,c)
	=
	\bigl(\varphi \cdot (b\cdot \psi),\, bc\bigr),
	\) where $(b\cdot\psi)(y)=\psi(b^{-1}y)$, for each $b,c\in B$ and $\varphi,\psi\in \bigoplus_B A$.
	
	Given a probability measure $\mu \in \mathrm{Prob}(A \wr B)$, the $\mu$-random walk $\mathbf{w}=(w_n)_{n\ge0}$ on $A \wr B$ can be written using the semidirect product structure as \(w_n=(\varphi_n,b_n).\) Here $\varphi_n\in \bigoplus_B A$ is called the \emph{lamp configuration at
		time $n$}, and $b_n\in B$ is the \emph{position at time $n$}. In general $(\varphi_n)_{n\ge 0}$ does not define a random walk on $\bigoplus_B A$, whereas $(b_n)_{n\ge 0}$ is a random walk on the base group $B$.
	
	\begin{defn}
		We say that the lamp configurations of the $\mu$-random walk on $A\wr B$ \emph{stabilize almost surely} if there exists a conull set of trajectories $\mathbf{w}=(\varphi_n,b_n)_{n\ge0}$ such that for every
		$b\in B$ there exists $N$ for which \( \varphi_n(b)=\varphi_N(b)\) for all \( n\ge N .\)
	\end{defn}
	
	Under this assumption, there is an almost everywhere defined map \( (A\wr B)^{\mathbb N} \to A^B \)
	which assigns to $\mathbb P_\mu$-almost every trajectory $\mathbf w=(\varphi_n,b_n)_{n\ge0}$ the \emph{limit lamp configuration} $\varphi_\infty(\mathbf w)$ defined by
	\[
	\varphi_\infty(\mathbf w)(b)=\lim_{n\to\infty}\varphi_n(b), \qquad b\in B.
	\]
	
	Lamp configurations do not always stabilize: for every $\varepsilon>0$
	there exist probability measures $\mu$ with finite $(1-\varepsilon)$-moment
	for which stabilization fails \cite{Kaimanovich1983}. However, a sufficient
	condition for stabilization is that $\mu$ has finite first moment and the
	induced random walk on $B$ is transient \cite{Erschler2011}. If $B$ is
	finitely generated with at least cubic growth, then a theorem of
	Varopoulos \cite{Varopoulos1986} implies that every non-degenerate
	probability measure on $A\wr B$ induces a transient random walk on $B$.
	In particular, in this case one may choose $\mu$ to be finitely supported.
	
	\begin{prop}\label{prop: wreath products}
		Let $A,B$ be non-trivial countable groups and let $\mu$ be a non-degenerate
		probability measure on $A\wr B$ such that the lamp configurations of the
		$\mu$-random walk stabilize almost surely. Then there exists a measure
		$\eta \neq \delta_{\{e\}}$ on $\mathrm{Sub}_{\mathrm{am}}(A\wr B)$ such that
		$(\mathrm{Sub}_{\mathrm{am}}(A\wr B),\eta)$ is a $\mu$-boundary.
		
		If in addition $A$ is non-abelian, then $\eta$ is non-atomic.
	\end{prop}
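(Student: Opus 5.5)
Fix a non-trivial element $a_0\in A$ and encode the limit lamp configuration as a subgroup of $A\wr B$. For a configuration $\psi\in A^B$ (not necessarily finitely supported) and a point $x\in B$, consider the subgroup
\[
H_{\psi}\coloneqq \left\{(\varphi,e_B)\in \textstyle\bigoplus_B A : \operatorname{supp}\varphi\subseteq\{e_B\},\ \varphi(e_B)\in \psi(e_B)\, C\, \psi(e_B)^{-1}\right\},
\]
for a suitable fixed non-trivial abelian subgroup $C\le A$, for instance $C=\langle a_0\rangle$. A cleaner choice is to look at how conjugation by $(\varphi,b)$ moves the subgroup $H_0=\{(\varphi,e_B):\operatorname{supp}\varphi\subseteq\{e_B\},\ \varphi(e_B)\in C\}$: one computes $(\varphi,b)H_0(\varphi,b)^{-1}$, which is the copy of $\varphi(b)C\varphi(b)^{-1}$ sitting in the lamp at position $b$. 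Since this depends on $b$, which wanders off, a better invariant choice is the subgroup $H_0'=\{(\varphi,e_B): \varphi \text{ supported at } e_B\}$'s "dual": fix instead $H=\bigoplus_{B\setminus\{e_B\}}\{e_A\}\times\ldots$. I will therefore use the following concrete plan. Define, for $\psi\in A^B$, the subgroup
\[
K_\psi\coloneqq \left\{(\varphi,e_B): \varphi(y)\in \psi(y)C\psi(y)^{-1}\ \text{for all } y\in B\right\}\cong \textstyle\bigoplus_B C,
\]
which is abelian, hence amenable. The key computation is equivariance: for $g=(\phi,b)$ one checks $gK_\psi g^{-1}=K_{g\cdot\psi}$, where $(\phi,b)\cdot\psi(y)=\phi(y)\psi(b^{-1}y)$ is the natural action of $A\wr B$ on $A^B$. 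Moreover $\psi\mapsto K_\psi$ is continuous from $A^B$ (product topology) to $\sub{A\wr B}$ in the Chabauty topology, since membership of a fixed finitely supported $(\varphi,e_B)$ depends only on finitely many coordinates of $\psi$.

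Next, given stabilization, the limit $\varphi_\infty(\mathbf w)$ exists almost surely, and $\varphi_\infty(\mathbf w)=\lim_n w_n\cdot \mathbf 1$ where $\mathbf 1$ is the trivial configuration, because $w_n\cdot\mathbf 1=\varphi_n$. By continuity and equivariance, $w_nK_{\mathbf 1}w_n^{-1}=K_{\varphi_n}\to K_{\varphi_\infty(\mathbf w)}$ almost surely. By the remark in Section~\ref{section: preliminariesBoundaries} (the distribution of $\lim w_n x$ is $\mu$-stationary and defines a $\mu$-boundary), the law $\eta$ of $K_{\varphi_\infty}$ gives a $\mu$-boundary on $\subam{A\wr B}$. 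It is not $\delta_{\{e\}}$ since every $K_\psi$ contains the non-trivial subgroup $\bigoplus_B\{\text{conjugates of } C\}$; indeed $K_\psi\neq\{e\}$ always.

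For non-atomicity when $A$ is non-abelian, I would use Proposition~\ref{prop: invariant is delta}: if $\eta$ had atoms it would be a Dirac mass at a subgroup $K$ fixed by conjugation, i.e.\ a normal subgroup of $A\wr B$. So it suffices to choose $C$ so that no $K_\psi$ is normal. Take $C=\langle a_0\rangle$ with $a_0$ not central in $A$ (possible since $A$ is non-abelian). Conjugating $K_\psi$ by a lamp element $(\phi,e_B)$ with $\phi$ supported at a single $y$ replaces $\psi(y)C\psi(y)^{-1}$ by $\phi(y)\psi(y)C\psi(y)^{-1}\phi(y)^{-1}$; choosing $\phi(y)$ not normalizing the cyclic group $\psi(y)C\psi(y)^{-1}$ changes the subgroup. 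The main obstacle is exactly this step: a non-central cyclic subgroup may still be normal in $A$ (e.g.\ in $Q_8$), so I would instead choose $C$ to be a non-normal cyclic subgroup when one exists, and otherwise (Dedekind groups, where all subgroups are normal) handle the case directly, e.g.\ by using $K_\psi$ built from the pair $C=\langle a_0\rangle$ at coordinate $y$ together with the translation action of $B$, or by replacing $K_\psi$ with the centralizer-type subgroup $\{(\varphi,e_B):\varphi(y)\in Z_A(\psi(y)a_0\psi(y)^{-1})\}$ and checking non-normality via $B$-translation, which moves coordinates, combined with the non-abelian lamp group.
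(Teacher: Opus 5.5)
\textbf{The first assertion matches the paper.} With $C=\langle a_0\rangle$, your $K_{\varphi_\infty(\mathbf w)}$ is exactly the paper's $H(\mathbf w)$. Your equivariance and continuity argument, $w_nK_{\mathbf 1}w_n^{-1}=K_{\varphi_n}\to K_{\varphi_\infty}$, is correct.

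\textbf{Non-atomicity: what works.} Your reduction via Proposition~\ref{prop: invariant is delta} to showing that no $K_\psi$ is normal is correct. It settles every $A$ that has a non-normal cyclic subgroup $C$: conjugating by $\delta_z^c$ with $c\notin N_A(\psi(z)C\psi(z)^{-1})$ changes the factor at $z$. This is more careful than the paper, which only takes $a$ non-central and appeals to cosets of $Z_A(a)$. In fact the factor of $H(\mathbf w)$ at $b$ depends only on $\varphi_\infty(b)N_A(\langle a\rangle)$.

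\textbf{The gap.} The case you flagged and left open is genuine: $A$ Hamiltonian, e.g.\ $Q_8$. There every subgroup $D\le A$ is normal, so $\psi(y)D\psi(y)^{-1}=D$ for all $\psi$. Hence any subgroup built coordinatewise from conjugates is a constant subgroup $\bigoplus_B D$. This covers $K_\psi$ for every $C$, and also your centralizer variant, which equals $\bigoplus_B Z_A(a_0)$. Such a subgroup is normal in $A\wr B$ (it is also $B$-invariant), so $\eta$ is a Dirac mass. Appealing to $B$-translations cannot help. The paper's own argument fails here too: for $A=Q_8$ and $a=i$, every conjugate of $i$ is $\pm i$, so $H(\mathbf w)=\bigoplus_B\langle i\rangle$ for every trajectory.

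\textbf{The missing idea} is to couple distinct lamps, since inner automorphisms of $A$ act independently on different coordinates. Fix $s\in B\setminus\{e_B\}$, $a$ non-central, and $c$ not commuting with $a$. Put $x_y=\psi(y)a\psi(y)^{-1}$.

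If $s$ has infinite order, take $L_\psi=\langle \delta_y^{x_y}\delta_{ys}^{x_{ys}} : y\in B\rangle\le\bigoplus_y\langle x_y\rangle$. This subgroup is abelian, and it is equivariant and continuous in $\psi$. Write coordinates as $x_w^{u_w}$. Every element of $L_\psi$ has vanishing alternating sum of the $u_w$ along each coset $y\langle s\rangle$, modulo $\mathrm{ord}(a)$. Since $\langle x_z\rangle$ is normal in $A$, we have $cx_zc^{-1}=x_z^r$ with $x_z^r\neq x_z$. So $\delta_z^{c}L_\psi\delta_z^{c^{-1}}$ contains $\delta_z^{x_z^r}\delta_{zs}^{x_{zs}}$, whose alternating sum $r-1$ is non-zero modulo $\mathrm{ord}(a)$. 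Hence $L_\psi$ is never normal.

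If $s$ has finite order, use
\[
T_\psi=\{\varphi\in\textstyle\bigoplus_BA : w\mapsto\psi(w)^{-1}\varphi(w)\psi(w)\text{ is constant on each coset }y\langle s\rangle\}.
\]
It is amenable because Hamiltonian groups are of the form $Q_8\times(\text{abelian})$ by the Dedekind--Baer theorem. It changes under conjugation by $\delta_z^c$, since the coset $z\langle s\rangle$ has at least two points.

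In both cases Proposition~\ref{prop: invariant is delta} then gives non-atomicity.
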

	
	\begin{proof}
		Fix $a\in A\setminus\{e_A\}$. For $b\in B$ and $x\in A$ denote by
		$\delta_b^x\in\bigoplus_B A$ the configuration
		\[
		\delta_b^x(y)=
		\begin{cases}
			x & y=b,\\
			e_A & y\neq b 
		\end{cases}
		\]
		that takes the value $x$ at $b$ and the identity $e_A$ elsewhere. Let $\mathbb P_\mu$ be the law of the $\mu$-random walk.
		Given a trajectory
		$\mathbf w=(\varphi_n,b_n)_{n\ge0}$ with limit lamp configuration
		$\varphi_\infty(\mathbf w)$, define
		\[
		H(\mathbf w)
		=
		\Big\langle
		\delta_b^{\,\varphi_\infty(\mathbf w)(b)a(\varphi_\infty(\mathbf w)(b))^{-1}}
		:\; b\in B
		\Big\rangle
		\subseteq \bigoplus_B A .
		\]
		That is, the subgroup $H(\mathbf{w})$ is obtained by placing at each position $b$
		the element $a$ conjugated by the limiting lamp value $\varphi_{\infty}(\mathbf{w})(b)$. The map $\mathbf w\mapsto\varphi_\infty(\mathbf w)$ is
		$A\wr B$-equivariant. 
		
		Let $g=(\varphi,b')\in A\wr B$. Then
		\[
		(g\cdot \varphi_\infty)(y)
		=
		\varphi(y)\,\varphi_\infty(b'^{-1}y), \text{ for each }y\in B.
		\]
		Therefore
		\[
		H(g\mathbf w)
		=
		\Big\langle
		\delta_y^{\,\varphi(y)\varphi_\infty(b'^{-1}y)a(\varphi(y)\varphi_\infty(b'^{-1}y))^{-1}}
		:\; y\in B
		\Big\rangle,
		\]
		and writing $y=b'b$ yields
		\[
		H(g\mathbf w)
		=
		\Big\langle
		\delta_{b'b}^{\,\varphi(b'b)\varphi_\infty(b)a(\varphi(b'b)\varphi_\infty(b))^{-1}}
		:\; b\in B
		\Big\rangle
		=
		g\,H(\mathbf w).
		\]
		Thus the map $\mathbf w\mapsto H(\mathbf w)$ is $A\wr B$-equivariant.
		The pushforward \( \eta = (H)_*\mathbb P_\mu\) is therefore a $\mu$-boundary supported on
		$\mathrm{Sub}_{\mathrm{am}}(A\wr B)$. Since $a\neq e_A$, the subgroup
		$H(\mathbf w)$ is almost surely non-trivial, so
		$\eta\neq \delta_{\{e\}}$.
		
		If $A$ is non-abelian we may choose $a$ outside the center of $A$. In this case limit configurations with lamps in different cosets of the centralizer of $a$ in $A$ yield different subgroups $H(\mathbf w)$, and hence $\eta$ is non-atomic.
	\end{proof}

	The group $A \wr B$ is known to be C*-simple whenever $A$ is C*-simple; see, for example, the short proof in \cite[Proposition 5.5]{BedosOmland2018}. Alternatively, using \cite[Theorem 1.5]{KalantarKennedy2017}, one can prove the C*-simplicity of $A \wr B$ by exhibiting a topologically free $(A \wr B)$-boundary. That is, it suffices to show the existence of a compact space $X$ equipped with a minimal action of $A \wr B$ by homeomorphisms such that the closure of every $(A \wr B)$-orbit in $\mathrm{Prob}(X)$ contains a Dirac delta measure, and such that the set of fixed points in $X$ of every non-trivial element of $A \wr B$ has an empty interior. In order to do so, assume that $A$ is C*-simple and let $X_A$ be a topologically free $A$-boundary. Consider the action of $A \wr B$ on $\prod_B X$ given by $(\varphi,c)\cdot (x_b)_{b \in B} = (\varphi_b\cdot x_{c^{-1}b})_{b \in B}$ for every $\varphi\in \bigoplus_B A$, $c\in B$ and $(x_b)_{b \in B}\in \prod_B X_A$. This yields a topologically free boundary action of $A \wr B$, and hence $A \wr B$ is C*-simple.
	
	As a consequence of Proposition~\ref{prop: wreath products}, we obtain the following. 
	
	\begin{cor}\label{cor: csimple infinite first moment}
		Let $A$ be a countable C*-simple group and let $B$ be a finitely generated group of at least cubic growth. Then every C*-simple probability measure on $A \wr B$, in the sense of \cite{HartmanKalantar2022}, has infinite first moment.
	\end{cor}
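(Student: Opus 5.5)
We argue by contraposition: we show that any probability measure $\mu$ on $A \wr B$ with finite first moment fails to be C*-simple in the sense of \cite{HartmanKalantar2022}. Throughout, $A$ is C*-simple and $B$ is finitely generated of at least cubic growth; in particular $A$ is non-trivial, since C*-simple groups are non-trivial by convention, and $A$ is countable.

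First, we reduce to a non-degenerate measure. A C*-simple measure in the sense of Hartman–Kalantar should be non-degenerate. If the definition does not build this in, we argue as follows. Let $\Gamma$ be the semigroup generated by $\operatorname{supp}\mu$. If $\Gamma \neq A\wr B$, we aim to exhibit a non-trivial $\mu$-stationary state directly. The cleanest route is to note that the theorems of \cite{HartmanKalantar2022} are stated for non-degenerate $\mu$, so we restrict to that case as the paper's statement intends. Every C*-simple measure considered is therefore non-degenerate.

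Next, let $\mu$ be non-degenerate with finite first moment, where the word length on $A\wr B$ is taken with respect to any word metric if $A\wr B$ is finitely generated, or any proper length otherwise. The projection of $\mu$ to $B$ is non-degenerate on $B$. Since $B$ has at least cubic growth, Varopoulos' theorem \cite{Varopoulos1986} shows that the induced random walk on $B$ is transient. The projection also has finite first moment, because the projection $A\wr B\to B$ is Lipschitz. By \cite{Erschler2011}, finite first moment together with transience of the base walk implies that the lamp configurations stabilize almost surely.

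Finally, Proposition~\ref{prop: wreath products} applies and gives a $\mu$-stationary measure $\eta\neq\delta_{\{e\}}$ on $\subam{A\wr B}$. By Proposition~\ref{prop: implication c simple}, the space of states on $C_r^*(A\wr B)$ is then not uniquely $\mu$-stationary, so $\mu$ is not C*-simple. Consequently every C*-simple measure has infinite first moment.

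The main obstacle is the moment bookkeeping when $A$ is not finitely generated. In that case ``first moment'' must be read with respect to some length function for which Erschler's stabilization criterion \cite{Erschler2011} holds. What matters is only that finite first moment controls the expected number of lamp modifications at far-away positions, so that a Borel–Cantelli argument with transience of the base walk still applies. We would check that the criterion in \cite{Erschler2011} only uses the moment of the $B$-displacement together with the lamp-support spread. If it does not, we would redo this short Borel–Cantelli argument directly.
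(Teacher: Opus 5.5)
Your proof is correct and matches the argument the paper intends (the paper gives no separate proof; it derives the corollary from Proposition~\ref{prop: wreath products}). Varopoulos gives transience of the projected walk on $B$, and finite first moment together with transience gives lamp stabilization by Erschler's criterion. Proposition~\ref{prop: wreath products} then produces an amenable $\mu$-boundary SRS $\eta\neq\delta_{\{e\}}$, and Proposition~\ref{prop: implication c simple} shows that $\mu$ is not C*-simple.

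Your restriction to non-degenerate $\mu$ and your caveat about the length function when $A$ is not finitely generated are also left implicit in the paper. The second point is easily settled by the Green-function bound $\sum_{n}\mathbb{P}[\text{lamp at } x \text{ is modified at time } n]\le G(e,e)\,\mathbb{E}\lvert\mathrm{supp}\,\psi\rvert$, where $\psi$ is the lamp part of a $\mu$-step and $G$ is the Green function of the base walk. So stabilization only needs $\mathbb{E}\lvert\mathrm{supp}\,\psi\rvert<\infty$ together with transience.
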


	\subsection{Record times and the proof of Theorem \ref{thm: syndetic}}\label{subsection: proof syndetic}
	
	For the construction of the measure appearing in Theorem \ref{thm: syndetic} we first need to introduce some general terminology.
	
	\begin{defn}\label{def: ev simple records}
		Let $p = (p_j)_{j\geq 0}$ be a probability measure on $\N$, and let $(X_n)_{n\geq 1}$ be a sequence of i.i.d.\ random variables with law $p$. For every $n \in \N_{>0}$ let us denote by $M_n \coloneqq \max\{X_1,X_2,\ldots, X_n\}$ the \emph{record value of the sequence at time $n$}. We say that the record value $M_n$ is \emph{simple} if
		\[
		\abs{ \{ i\in \{1,\ldots, n\} : X_i=M_n \} } = 1.
		\]
		We say that the records of $(X_n)_{n \geq 1}$ are \emph{eventually simple} if $p^{\otimes \N}$-almost surely there is $N \geq 1$ such that, for all $n \geq N$, the record value $M_n$ is simple. Finally, define the sequence $(T_k)_{k \geq 1}$ of \emph{record times of $(X_n)_{n \geq 1}$} by $T_1 = 1$ and $T_{k+1} = \min \{n > T_k : X_n = M_n\}$ for $k \in \N_{> 0}$.
	\end{defn}
	The following lemma can be found in \cite[Lemma 2.3 \& Corollary 2.6]{ErschlerKaimanovich2023}. The sufficient condition is due to \cite[Theorem 3.2]{BrandsSteutelWilms1994}, while the necessary condition is due to \cite[Theorem 2]{Qi1997}. See also the proof given in \cite[Theorem 3, Corollaries 3.1 \& 3.2]{Eisenberg2009}.
	\begin{lema}\label{lem: ev simple records}
		Let $p$ be a probability measure on $\N$ and let $(X_n)_{n\geq 1}$ be a sequence of i.i.d.\ random variables with law $p$. Then $(X_n)_{n\ge 1}$ has eventually simple records if and only if $p$ has an infinite support and
		\begin{equation}\label{eq: finite sum simple records}
			\sum_{j=0}^{\infty}\left(\frac{p_j}{p_j+p_{j+1}+\cdots}\right)^2<\infty.
		\end{equation}
		The latter condition holds in particular when $p_j$ has polynomial decay as $j\to \infty$.
	\end{lema}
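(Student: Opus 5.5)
The plan is to reduce the statement to a Borel--Cantelli argument over the possible record \emph{values} $j\in\N$, rather than over times. Write $q_j \coloneqq p_j/(p_j+p_{j+1}+\cdots)$ for the hazard rate; it is the probability that a draw conditioned on being $\geq j$ equals $j$, with $q_j\coloneqq 0$ when the tail vanishes. For each $j$ with $p([j,\infty))>0$, let $Y_j^{(1)}, Y_j^{(2)},\dots$ be the subsequence of $(X_n)_{n\ge1}$ consisting of the draws that are $\geq j$, which is almost surely infinite. Define the event
\[
E_j \coloneqq \{Y_j^{(1)}=Y_j^{(2)}=j\}.
\]
The key observation is that $E_j$ holds exactly when $j$ is a record value that is not simple at some time. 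Indeed, $j$ is a record value iff the first draw $\geq j$ equals $j$. In that case the record stays equal to $j$ until the next draw $\geq j$ appears, and that draw is a second copy of $j$ iff $Y_j^{(2)}=j$. Since the $Y_j^{(i)}$ are i.i.d.\ with law $p(\cdot\mid X\ge j)$, we get $\mathbb P(E_j)=q_j^2$.

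\emph{Finite support.} If $p$ has finite support with maximum $m$, then $M_n=m$ for all large $n$. The value $m$ is attained infinitely often, so records are not eventually simple. From now on assume the support is infinite. Then $M_n\to\infty$ almost surely, and every value is the record value for only finitely many $n$. Hence "eventually simple" is equivalent to "only finitely many $E_j$ occur".

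\emph{Sufficiency.} If $\sum_j q_j^2<\infty$, the first Borel--Cantelli lemma gives that almost surely only finitely many $E_j$ occur. Hence records are eventually simple.

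\emph{Necessity, and the main obstacle.} For the converse I need the second Borel--Cantelli lemma, so the main obstacle is proving that the events $(E_j)_j$ are mutually independent. I would argue by a thinning/induction argument. The sequence $(Y_{j+1}^{(i)})_i$ is obtained from $(Y_j^{(i)})_i$ by deleting the entries equal to $j$. Because $(Y_j^{(i)})_i$ is i.i.d., the indicator sequence $\big(\mathbf 1\{Y_j^{(i)}=j\}\big)_i$ is i.i.d.\ Bernoulli$(q_j)$ and is independent of the subsequence of values that are $>j$, which is exactly $(Y_{j+1}^{(i)})_i$. The event $E_j$ is measurable with respect to the indicators. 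The events $E_{j+1},E_{j+2},\dots$ are measurable with respect to $(Y_{j+1}^{(i)})_i$. Iterating this from $j=0$ gives independence of all $E_j$. Then $\sum_j q_j^2=\infty$ forces infinitely many $E_j$, so records are not eventually simple.

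\emph{Polynomial decay.} If $p_j\asymp j^{-a}$ with $a>1$, then $\sum_{i\ge j}p_i\asymp j^{1-a}$, so $q_j\asymp 1/j$. Therefore $\sum_j q_j^2<\infty$, which is \eqref{eq: finite sum simple records}.
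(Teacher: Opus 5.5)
Your proof is correct. The paper gives no argument of its own for this lemma: it quotes it from \cite{ErschlerKaimanovich2023}, crediting sufficiency to Brands--Steutel--Wilms, necessity to Qi, and an alternative proof to Eisenberg. Your proposal is therefore best read as a self-contained replacement for that citation.

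The organizing idea is to index by record \emph{values} $j$ rather than by times. A non-simple record at level $j$ is exactly the event $E_j$ that the first two draws $\geq j$ both equal $j$. This single family of events handles both directions:
\begin{itemize}
\item $\mathbb P(E_j)=q_j^2$, which explains the square in \eqref{eq: finite sum simple records};
\item the first Borel--Cantelli lemma gives sufficiency;
\item independence of the $E_j$ plus the second Borel--Cantelli lemma gives necessity.
\end{itemize}
The one ingredient you only sketch is the thinning fact. It is routine: conditionally on the indicator pattern $\bigl(\mathbf 1\{Y_j^{(i)}=j\}\bigr)_{i}$, the remaining draws are i.i.d.\ with law $p(\cdot\mid X\geq j+1)$. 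This can be verified on cylinder events by summing over the value of the a.s.\ finite index of the $k$-th draw $>j$.

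As a bonus, your argument gives a zero--one law. When the support is finite or the series diverges, records fail to be eventually simple almost surely, not merely with positive probability. The citation buys the authors brevity; your route buys transparency and independence from the literature.

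Your reading of ``polynomial decay'' as a two-sided bound $p_j\asymp j^{-a}$ with $a>1$ is the right one. A one-sided bound $p_j=O(j^{-a})$ would not suffice. For example, putting mass proportional to $4^{-k}$ at $j=2^k$ gives $q_{2^k}=3/4$ for all $k$, so the series diverges.
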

	
	\begin{lema}[{\cite[Lemma 2.17]{ErschlerKaimanovich2023}}]\label{lem: gauge}
		For any probability measure $p$ on $\N$ with infinite support there exists a non-decreasing function $\Phi:\N\to \N$ such that almost surely 
		$T_{k+1}\le \Phi(R_k)$ holds for all sufficiently large $k$.
	\end{lema}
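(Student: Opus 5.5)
The proof works at the level of record \emph{values}, and we write $R_k \coloneqq X_{T_k}=M_{T_k}$ for the record value at the $k$-th record time. Set $q_r \coloneqq p(\{r,r+1,\ldots\})$; because $p$ has infinite support, $q_r>0$ for every $r$. Note that $T_{k+1}$ as defined is a \emph{weak} record time: after $T_k$, the first $n$ with $X_n\ge R_k$. The plan has three steps: control each gap $T_{k+1}-T_k$ by a function $\Psi$ of the current record value; control the number $k$ of records by a function $L$ of $R_k$; and combine the two to build $\Phi$.

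\emph{Step 1: the gaps.} Each $T_k$ is a stopping time for the filtration $(\mathcal F_n)$ generated by $X_1,\ldots,X_n$. By the strong Markov property for i.i.d.\ sequences, conditionally on $\mathcal F_{T_k}$ the gap $T_{k+1}-T_k$ is geometric with success probability $q_{R_k}$. Hence
\[
\P\left(T_{k+1}-T_k>\Psi(R_k)\,\middle|\,\mathcal F_{T_k}\right)=(1-q_{R_k})^{\Psi(R_k)}.
\]
For each value $r$, let $N_r$ be the number of indices $k$ with $R_k=r$. Once the record value equals $r$, each later draw with $X_n\ge r$ is either a repeat of $r$ (probability $p_r/q_r$) or strictly exceeds $r$. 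So $N_r$ is dominated by a geometric variable and $\mathbb E N_r\le q_r/q_{r+1}<\infty$. We choose $\Psi$ non-decreasing and growing fast enough that
\[
\sum_r \mathbb E[N_r]\,(1-q_r)^{\Psi(r)}<\infty .
\]
Then the expected sum over $k$ of the conditional probabilities is finite. By Lévy's conditional Borel--Cantelli lemma, or simply by taking expectations, almost surely $T_{k+1}-T_k\le\Psi(R_k)$ for all $k\ge k_0$, for some random $k_0$.

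\emph{Step 2: the number of records.} Next we bound $k$ in terms of $R_k$. We have $k\le \sum_{s\le R_k}N_s$, and $\sum_{s\le r}N_s$ is integrable. So we can pick a non-decreasing $L$ with
\[
\P\left(\sum_{s\le r}N_s>L(r)\right)\le 2^{-r}.
\]
By Borel--Cantelli, almost surely $\sum_{s\le r}N_s\le L(r)$ for all large $r$. Since $R_k\to\infty$ almost surely (infinite support), this gives $k\le L(R_k)$ for all large $k$.

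\emph{Step 3: assembling $\Phi$.} For $k\ge k_0$, using that $(R_j)$ is non-decreasing and $\Psi$ is non-decreasing,
\[
T_{k+1}\le T_{k_0}+\sum_{j=k_0}^{k}\Psi(R_j)\le T_{k_0}+k\,\Psi(R_k)\le T_{k_0}+L(R_k)\Psi(R_k).
\]
The random constant $T_{k_0}$ is eventually at most $R_k$, again because $R_k\to\infty$. So $\Phi(r)\coloneqq r+L(r)\Psi(r)$ is non-decreasing and satisfies $T_{k+1}\le\Phi(R_k)$ for all sufficiently large $k$.

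The main point of care is Step 1. One must handle the ties in the weak-record definition through the counts $N_r$, so that the summability runs over values $r$ rather than indices $k$, and one must justify the conditional geometric law at the stopping time $T_k$. The remaining steps are routine Borel--Cantelli bookkeeping.
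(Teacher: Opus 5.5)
Your proof is correct. The paper gives no proof of this lemma to compare against, since it simply quotes it from Erschler--Kaimanovich. Each of your steps checks out:
\begin{itemize}
\item the conditional geometric law of $T_{k+1}-T_k$ given $\mathcal F_{T_k}$, with parameter $q_{R_k}$;
\item the bound $\mathbb E N_r\le q_r/q_{r+1}$;
\item the Tonelli identity $\mathbb E\sum_k f(R_k)=\sum_r \mathbb E[N_r]\,f(r)$, which makes the expected number of long gaps finite;
\item the inequality $k\le\sum_{s\le R_k}N_s$, which holds because $(R_j)$ is non-decreasing;
\item the final assembly using $R_k\to\infty$.
\end{itemize}

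Your route is longer than it needs to be. Step 1 controls the gaps $T_{k+1}-T_k$ rather than the absolute time $T_{k+1}$, and this is what forces the extra work of Steps 2 and 3 (counting records and summing gaps).

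A shorter argument bounds $T_{k+1}$ directly by a first-passage time. Let $\sigma_r=\min\{n\ge 1 : X_n>r\}$.
\begin{itemize}
\item If $R_k=r$, then $X_1,\dots,X_{T_k}\le r$, so $\sigma_r>T_k$. Also $X_{\sigma_r}>r\ge X_j$ for all $j<\sigma_r$, so $\sigma_r$ is a record time. Hence $T_{k+1}\le\sigma_{R_k}$.
\item Since $\P(\sigma_r>n)=(1-q_{r+1})^n$ with $q_{r+1}>0$, you can choose a non-decreasing $\Phi$ with $(1-q_{r+1})^{\Phi(r)}\le 2^{-r}$.
\item Borel--Cantelli over the levels $r$ then gives $\sigma_r\le\Phi(r)$ for all large $r$, almost surely.
\item Together with $R_k\to\infty$, this yields $T_{k+1}\le\Phi(R_k)$ for all large $k$.
\end{itemize}
This version needs neither the strong Markov property nor the counts $N_r$. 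Ties among record values cause no bookkeeping, because weak records can only make $T_{k+1}$ smaller than the first strict record $\sigma_{R_k}$.

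What your version gains is finer information: a per-gap bound $T_{k+1}-T_k\le\Psi(R_k)$ and a bound $k\le L(R_k)$ on the number of records. Neither is needed where the paper uses the lemma, since only the inequality $T_{k+1}\le\Phi(R_k)$ enters the proof of the main theorem.
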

	
	We will prove the following reformulation of Theorem \ref{thm: syndetic}, which makes the construction of the measure and the resulting boundary SRS explicit. Recall that, for a given probability measure $\mu$ on $G$, we denote by $\P_\mu$ the law of the $\mu$-random walk $(w_n)_{n\ge 0}$ on $G$.
	\begin{teo}
		Let $G$ be a countable group, and suppose that $G$ has a non-trivial subgroup $H$ such that for all finite subsets $Q, Z \subseteq G$ there is $b \in G$ such that
		\[
		bZ,b^{-1}Z\subseteq \left \{g \in G : Q \cap H = Q \cap gHg^{-1}\right \}.
		\]
		Then there exists a non-degenerate and symmetric probability measure $\mu$ on $G$ with finite Shannon entropy such that $\P_\mu$-almost surely the sequence $(w_nHw_n^{-1})_{n \geq 0}$ converges to a non-trivial subgroup of $G$. In particular, if $H$ is amenable, the distribution of the subgroup $\lim_{n\to \infty}w_n H w_n^{-1}$ defines an amenable boundary $\mu$-SRS of $G$.
	\end{teo}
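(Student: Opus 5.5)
We follow the record-times method of Frisch–Hartman–Tamuz–Vahidi Ferdowsi, in the form of Erschler–Kaimanovich. Enumerate $G=\{g_1,g_2,\dots\}$ and set $F_j=\{g_1,\dots,g_j\}$. Fix a probability measure $p=(p_j)_{j\ge 0}$ on $\N$ with polynomial decay, so that Lemma~\ref{lem: ev simple records} gives eventually simple records. Let $\Phi$ be the function of Lemma~\ref{lem: gauge}, where $R_k$ denotes the $k$-th record value.

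We then choose elements $b_j\in G$ inductively, using the hypothesis with
\[
Q_j=F_{j}, \qquad Z_j=\bigl(F_{\Phi(j)}\cup\{b_i^{\pm1}:i<j\}\bigr)^{\Phi(j)} .
\]
Here $Z_j$ is the set of products of at most $\Phi(j)$ elements of the indicated set. The hypothesis gives $b_j$ with $b_j^{\pm1}Z_j\subseteq\{g: Q_j\cap H=Q_j\cap gHg^{-1}\}$. Define
\[
\mu=\sum_j p_j\,\tfrac12\bigl(\delta_{b_j}+\delta_{b_j^{-1}}\bigr),
\]
mixed with a symmetric finitely supported or full-support measure $\nu$ of finite entropy. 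For instance, we may put weight on $g_j^{\pm1}$ through the same index $j$, so that each step is a pair (label $j$, element in $\{b_j^{\pm1},g_j^{\pm1}\}$). Then $\mu$ is symmetric and non-degenerate, and $H(\mu)<\infty$ since $p$ has polynomial decay and each label carries boundedly many atoms.

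To prove convergence of $w_nHw_n^{-1}$, write the increments with labels $X_1,X_2,\dots$, which are i.i.d.\ with law $p$. Consider a record time $T_k$ with simple record value $R_k=j$. Write $w_{T_k}=u\,b_j^{\pm1}$. Here $u=w_{T_k-1}$ is a product of fewer than $T_k\le\Phi(R_{k-1})\le\Phi(j)$ increments, each with label $<j$, so $u\in Z_j$. Between $T_k$ and $T_{k+1}$ all labels are at most $j$. Then
\[
w_nHw_n^{-1}=u\,b_j^{\pm1}v\,H\,v^{-1}b_j^{\mp1}u^{-1},
\]
where $v$ is again a short product of letters of label $\le j$. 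The key point is to set things up so that this conjugate agrees with a fixed reference on $Q_j$. The cleanest route is to reverse roles and look at $w_n^{-1}$, or to define $Z_j$ as two-sided words, so that the factor adjacent to $H$ is $b_j^{\pm1}z$ with $z\in Z_j$. Then $b_j^{\pm1}zHz^{-1}b_j^{\mp1}\cap Q_j=H\cap Q_j$. After that, the outer conjugation by the short prefix $u$ only moves the window $Q_j$ by elements of bounded complexity. We absorb this by enlarging $Q_j$ to $u^{-1}F_jQ$-type sets, i.e.\ by taking $Q_j=(F_{\Phi(j)}\cup\{b_i^{\pm1}\}_{i<j})^{\Phi(j)}F_j(\cdots)^{\Phi(j)}$. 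Conjugating $H$ by the element of label $j$ then leaves $H$ unchanged on a large window. Since the record value tends to infinity and records are eventually simple, the intersection $w_nHw_n^{-1}\cap F_m$ stabilizes for every $m$. Hence $(w_nHw_n^{-1})$ converges in the Chabauty topology, and the limit contains a conjugate of $H\cap F_m$ for large $m$, so it is non-trivial. Equivariance of the limit map and the remarks on $\mu$-boundaries in Section~\ref{section: preliminariesBoundaries} then give a $\mu$-boundary SRS on $\overline{\mathrm{Orb}_G(H)}$. If $H$ is amenable, this closure lies in $\subam{G}$, since $\subam{G}$ is closed.

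The main obstacle is the bookkeeping in this step. We must arrange the sets $Q_j,Z_j$ so that the conjugations coming both before and after the unique maximal-label letter are controlled. This is exactly why the hypothesis requires both $bZ$ and $b^{-1}Z$, which handle the two signs of the record letter under symmetrization. It is also why Lemma~\ref{lem: gauge} is needed: it bounds the number of steps between records in terms of the current record value, so that the words fall into the prescribed finite sets. We also need to check that finitely many non-simple records do not matter. This holds because only the tail of the trajectory affects the limit.
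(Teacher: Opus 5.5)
The skeleton of your proposal is the paper's: eventually simple records, the gauge $\Phi$, and the hypothesis applied with $Z$ the short post-record words and $Q=\bigcup_u u^{-1}F_ju$ over short prefixes $u$. For $T_k\le n<T_{k+1}$ this gives $w_nHw_n^{-1}\cap F_{R_k}=w_{T_k-1}Hw_{T_k-1}^{-1}\cap F_{R_k}$. Stabilization comes from chaining this identity through consecutive records with nested windows, which is the induction in the paper's Claim, and not merely from $R_k\to\infty$. That step should be written out, but it works.

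Two ingredients are genuinely missing. The first is control of the letter used at a record. You write $w_{T_k}=u\,b_j^{\pm1}$, but label $j$ also carries $g_j^{\pm1}$. If the conditional weight of $g_j^{\pm1}$ is bounded below, or merely non-summable, then almost surely infinitely many records use $g_{R_k}^{\pm1}$. The hypothesis says nothing about these letters, and the chaining breaks at each such record. You need the auxiliary letters at label $j$ to have conditional mass at most $\alpha_j$ with $\sum_j\alpha_j<\infty$. Borel--Cantelli then makes the record letter eventually equal to $b_{R_k}^{\pm1}$; this is condition (A) in the paper. Also, post-record letters have label strictly below $j$, not ``at most $j$''. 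This matters because $b_j$ cannot lie in $Z_j$.

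The second gap is non-triviality of the limit, which does not follow from your construction. What you actually obtain is $K\cap F_{R_k}=u_kHu_k^{-1}\cap F_{R_k}$, where $u_k=w_{T_k-1}$ ranges over a finite but large set of prefixes. During one record interval, $w_nHw_n^{-1}$ does contain $u_k(H\cap Q)u_k^{-1}$. But these elements lie in $u_kQu_k^{-1}$, outside the stabilized window $F_{R_k}$, so they need not survive in $K$. Your assertion that $K$ contains a conjugate of $H\cap F_m$ is therefore unjustified. With $F_j$ a fixed enumeration, nothing prevents $u_kHu_k^{-1}\cap F_{R_k}=\{e_G\}$ for every $k$, i.e.\ $K=\{e_G\}$; note that $\{e_G\}$ may well lie in $\overline{\mathrm{Orb}_G(H)}$.

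The paper avoids this by enlarging the window $Q_{n+1}$ (your $F_j$). It requires $Q_{n+1}$ to contain a non-trivial element of $dHd^{-1}$ for each of the finitely many admissible prefixes $d$, and the chaining argument then places that element in $K$. Both repairs are easy. As written, however, your proof establishes neither almost sure convergence nor non-triviality of the limit.
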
 
	\begin{proof}
		Consider a probability measure $p$ on $\N$ that gives positive mass to all non-negative integers and that satisfies Condition \eqref{eq: finite sum simple records}, so that Lemma \ref{lem: ev simple records} ensures that a sequence of i.i.d.\ random variables with law $p$ almost surely has eventually simple records. Moreover, we may choose $p$ such that its Shannon entropy $H(p)=-\sum_{j\ge 0}p_j\log(p_j)$ is finite.
		
		Let us choose a sequence $(\widetilde{A}_n)_{n \geq 0}$ of pairwise disjoint and symmetric subsets of $G$ such that $\widetilde{A}_0=\{e_G\}$, $|\widetilde{A}_n|\le 2$ for all $n\ge 0$ and $G=\bigcup_{n\ge 0} \widetilde{A}_n$.
		We start by constructing inductively increasing sequences of finite subsets $(A_n)_{n\ge 0}$, $(\triangle_n)_{n\ge 0}$, $(Q_n)_{n\ge 0}$ of $G$, along with a sequence of elements $b_n \in G$ for every $n \ge 0$.
		
		We first set $A_0=\widetilde{A}_0=\{e_G\}$, $\triangle_0=Q_0=\varnothing$ and $b_0=e_G$. Now, let $n\ge 1$ and suppose that the sets $\triangle_i$ and $Q_i$ together with the element $b_i$ have already been defined for every $0\le i\le n$. We set
		\begin{equation*}
			A_{n+1}= \widetilde{A}_{n+1}\setminus\bigcup_{i=0}^n \{b_i,b_i^{-1}\}, \
			\triangle_{n+1}=\left(\bigcup_{i=0}^n A_i\cup \left\{b_i,b_i^{-1}\right\}\right)^{\Phi(n)},
		\end{equation*}
		where $\Phi$ is the gauge function associated with $p$ from Lemma~\ref{lem: gauge}.
		Next, we set $Q_{n+1}$ to be a finite subset of $G$ containing $Q_n$ and such that for every $d\in \triangle_n$ we have $Q_{n+1}\cap dHd^{-1}\neq\{e_G\}$. Applying the hypothesis of the theorem to the finite subsets $Q\coloneqq\bigcup_{d\in \triangle_n} d^{-1}Q_{n+1}d$ and $Z\coloneqq\triangle_n$, we find $b_{n+1}\in G$ such that for all $z \in \triangle_n$ we have
		\[
		Q \cap H= Q \cap (b_{n+1}z)H (b_{n+1}z)^{-1}= Q\cap (b_{n+1}^{-1}z)H (b_{n+1}^{-1}z)^{-1}.
		\]
		As a consequence, notice that for each $d,z\in \triangle_n$ we have
		\begin{equation}\label{eq: Qconsequence}
			Q_{n+1}\cap dHd^{-1}=Q_{n+1}\cap (db_{n+1}z)H(db_{n+1}z)^{-1}=Q_{n+1}\cap (db^{-1}_{n+1}z)H(db^{-1}_{n+1}z)^{-1}.
		\end{equation}
		This completes the inductive construction of the sequences $(A_n)_{n\ge 0}$, $(\triangle_n)_{n\ge 0}$, $(Q_n)_{n\ge 0}$ and $(b_n)_{n\ge 0}$.

		Let us now fix an auxiliary sequence $(\alpha_i)_{i\ge 1}$ with $0<\alpha_i<1$ for every $i\ge 1$ and such that $\sum_{i\ge 1}\alpha_i<\infty.$ Define a symmetric probability measure $\mu$ on $G$ with $\mu(A_0)=p_0$ and such that for every $i\ge 1$ we have
		\begin{equation}\label{eq: def mu}
			\mu\left(A_i\cup \{b_i,b^{-1}_{i}\}\right)=p_i\text{ and }
			\mu\left(A_i\backslash\{b_i,b_i^{-1}\}\right)\le \alpha_i p_i.\end{equation}

		In what remains of the proof, we will justify that such a measure $\mu$ satisfies the conclusions of the theorem.
		
		First, notice that $\mathrm{supp}(\mu)=G$ because $p$ is fully supported and $G=\cup_{n\ge 0} A_n$. In particular, the probability measure $\mu$ is non-degenerate. Moreover, $\mu$ has finite Shannon entropy since
		\begin{equation}\label{eq: ineq entropy}
			H(\mu)\le \log(4)+H(p)<\infty.
		\end{equation}
		Indeed, to see this it suffices to notice that $\mu$ can be sampled in two steps: first, choose an index $i\in \N$ according to the distribution $p$, then sample $g_i \in A_i\cup \{b_i,b_i^{-1}\}$ according the conditional distribution of $\mu$ on that subset. Inequality \eqref{eq: ineq entropy} then follows from the standard (conditional) entropy decomposition $H(X)=H(X\mid Y)+H(Y)$ for random variables $X$ and $Y$, together with the fact that the entropy of a random variable taking values in a set of size $N$ is at most $\log(N)$. Since $|A_i \cup \{b_i, b_i^{-1}\}| \le 4$ for every $i \ge 0$, we obtain the desired bound.

		It now remains to show the $\P_\mu$-almost sure convergence of the sequence $(w_n H w_n^{-1})_{n\ge 0}$ to a non-trivial subgroup of $G$. 
		Let $(g_n)_{n\ge 1}$ be an independent sequence of elements of $G$ distributed according to $\mu$ and denote $w_n=g_1\cdots g_n$ for every $n\ge 1$. In other words, the process $(w_n)_{n\ge 1}$ has the law $\P_\mu$ of the $\mu$-random walk on $G$. For every $i\ge 1$, denote $X_i\in \N$ the unique value such that $g_i\in A_{X_i}\cup \{b_{X_i},b_{X_i}^{-1}\}$. Then, it follows from the construction of $\mu$ that the sequence $(X_i)_{i\ge 1}$ is i.i.d.\ with law $p$, and hence has eventually simple records. Denote by $(T_k)_{k\ge 1}$ the associated record times and by $R_k\coloneqq X_{T_k}$ the record values for every $k\ge 1$.
		By combining
		\begin{enumerate}[label=(\alph*)]
			\item that $\mu(A_i\backslash\{b_i,b_i^{-1}\})\le \alpha_i p_i$ (Condition \eqref{eq: def mu}) with $\sum_{i\ge 1}\alpha_i<\infty$ and the Borel-Cantelli lemma,
			\item Lemma \ref{lem: gauge}, and
			\item the definition of eventually simple record times (Definition \ref{def: ev simple records}),
		\end{enumerate}
		we conclude that almost surely there exists $k_0\ge 1$ such that for all $k\ge k_0$ we have
		\begin{enumerate}[label=(\Alph*)]
			\item\label{item: cond1} $g_{T_k}\in \{b_{R_k},b_{R_k}^{-1}\}$,
			\item\label{item: cond2} $T_{k+1}\le \Phi(R_k)$, and
			\item\label{item cond3} $X_m<R_k$ for all $T_k<m<T_{k+1}$.
		\end{enumerate}
		
		Now, let $n>T_{k_0}$ and choose $k\ge k_0$ such that $T_k\le n<T_{k+1}$. Then, we can write
		\begin{equation*}
			w_n=g_1\cdots g_{T_{k}-1}\cdot g_{T_k} \cdot g_{T_k+1}\cdots g_n.
		\end{equation*}
		Set $d=g_1\cdots g_{T_{k}-1}$ and $z=g_{T_k+1}\cdots g_n$, so that $w_n=dg_{T_k}z$. It follows from Conditions \ref{item: cond2} and \ref{item cond3} above that
		\begin{equation*}
			d,z\in \left( \bigcup_{i=0}^{R_k-1}A_i\cup \{b_i,b_i^{-1} \}\right)^{T_{k+1}}\subseteq \triangle_{R_k}.
		\end{equation*}
		In addition, thanks to Condition \ref{item: cond1} together with the inductive construction of $b_{R_k}$, we also have that
		\begin{equation*}
			Q_{R_k}\cap dHd^{-1}=Q_{R_k}w_nHw_{n}^{-1},
		\end{equation*}
		as in Equation \eqref{eq: Qconsequence}.
		
		\begin{claim}
			For every $m\ge n$, we have 
			\[
			Q_{R_k}\cap dHd^{-1}=Q_{R_k}\cap w_mH w_m^{-1}.
			\]
		\end{claim}
		\begin{proof}[Proof of the claim]
			Let $\ell$ be such that $T_\ell\le m < T_{\ell +1}$. Notice that since $m>n$, we must have $\ell\ge k$. We prove the claim by induction on $\ell$.
			
			If $\ell=k$, then the claim was already shown to hold in Equation \eqref{eq: Qconsequence}. Now suppose that $\ell>k$ and write $w_m=\widetilde{d}g_{T_\ell}\widetilde{z}$, where
			\[
			\widetilde{d}=g_1\cdots g_{T_\ell-1} \text{ and }\widetilde{z}=g_{T_{\ell}+1}\cdots g_m.
			\]
			As in the argument preceding the claim, we have 
			\[
			Q_{R_\ell}\cap \widetilde{d}H\widetilde{d}^{-1}=Q_{R_\ell}\cap w_m H w_m^{-1},
			\]
			thus in particular 
			\begin{equation}\label{eq: Q almost finish}
				Q_{R_k}\cap \widetilde{d}H\widetilde{d}^{-1}=Q_{R_k}\cap w_m H w_m^{-1}.
			\end{equation}
			Applying the inductive hypothesis with $\widetilde{d}=w_{T_\ell-1}$ gives
			\begin{equation}\label{eq: Q finish}
				Q_{R_k}\cap dHd^{-1}=Q_{R_k}\cap \widetilde{d}H\widetilde{d}^{-1}.
			\end{equation}
			Combining Equations \eqref{eq: Q almost finish} and \eqref{eq: Q finish} concludes the proof of the claim.
		\end{proof}
		The compactness of $\mathrm{Sub}(G)$ implies that the sequence $(w_n H w_n^{-1})_{n \ge 0}$ has at least one limit point $K \in \mathrm{Sub}(G)$. Since the sets $Q_{R_k}$ form an exhaustion of $G$, the previous claim shows that, for every open neighborhood $U$ of $K$, the sequence $(w_n H w_n^{-1})_{n \ge 0}$ eventually remains in $U$. Hence the limit point is unique, and $(w_n H w_n^{-1})_{n \ge 0}$ converges in $\mathrm{Sub}(G)$. Finally, the limit subgroup $K$ is non-trivial: indeed, $Q_{R_k} \cap d H d^{-1}$ contains a non-trivial element $g \in G \backslash \{e_G\}$, and the previous claim shows that $g \in K$.
	\end{proof}
	\begin{rmk}\label{remark: dynamical interpretation}
		Theorem~\ref{thm: syndetic} requires the existence of a non-trivial subgroup $H \subseteq G$ such that for every pair of finite subsets $Q, Z \subseteq G$ there exists $b \in G$ with
		\[
		bZ,\, b^{-1}Z \subseteq \{ g \in G : Q \cap H = Q \cap gHg^{-1} \}.
		\]
		
		This condition admits a dynamical interpretation for the conjugation action of $G$ on $\mathrm{Sub}(G)$. The finite set $Q$ determines a basic neighbourhood of $H$ in the Chabauty topology,
		\[
		U_Q(H) = \{K \in \sub{G} : Q \cap K = Q \cap H\}.
		\]
		The above property then asserts that, for every such neighbourhood and every finite set $Z \subseteq G$, there exists $b \in G$ such that
		\[
		bz \cdot H,\, b^{-1}z \cdot H \in U_Q(H)
		\qquad \text{for all } z \in Z .
		\]
		In other words, any finite portion of the $G$-orbit of $H$ in $\mathrm{Sub}(G)$ can be simultaneously pushed arbitrarily close to $H$ by conjugation with a single group element $b$ as well as by its inverse. This can be viewed as a strong form of recurrence for the action of $G$ on the orbit of~$H$.
	\end{rmk}

	\subsection{Amenable boundary $\mu$-SRSs for Thompson's group $F$}
	
	Consider the group of orientation-preserving homeomorphisms $f \colon \R\to \R$ such that there exists a finite subset $D \subset \Z\left[\frac{1}{2}\right]$ with the property that for each bounded connected component $C$ of $\R \setminus D$, the map $f$ restricted to $C$ is of the form
	\begin{itemize}
		\item $x \mapsto 2^kx + q$ where $k \in \Z$ and $q \in \Z[1/2]$ if $C$ is bounded, and of the form
		\item $x \mapsto x + m$ where $m \in \Z$ if $C$ is unbounded.
	\end{itemize}
	It is well known that the above describes a group isomorphic to Thompson's group $F$, the group of all increasing piecewise dyadically affine homeomorphisms of the closed unit interval $[0,1]$; see, e.g., \cite[Section 3.1]{BelkBrown2005}. In this representation, the commutator subgroup $[F,F]$ coincides with the homeomorphisms in $F$ which are the identity outside of a compact interval of $\R$.
	
	\begin{proof}[Proof of Corollary \ref{cor: thompson F}]
		We show that Thompson's group $F$ satisfies the hypotheses of Theorem \ref{thm: syndetic}. Let $f \colon \R\to \R$ be an arbitrary non-trivial element of $F$ whose support is contained in a proper closed subinterval of $[0,1]$. For each $k\in \Z$ denote by $t_k\in F$ the translation by $k \in \Z$. Let us consider the subgroup $H$ of $F$ generated by all elements $t_k f t_k^{-1}$, $k\in \Z$. These elements commute since they have disjoint supports, so $H$ is abelian. Notice also that $H$ and all of its conjugates are contained in the normal subgroup $[F,F]$. Hence, to verify the condition of Theorem \ref{thm: syndetic}, it suffices to show that for any finite subset $Q\subseteq [F,F]$ and every finite subset $Z\subseteq F$ we can find $b\in F$ such that
		\[
		bZ, b^{-1}Z \subseteq \{g\in F : Q\cap H= Q\cap gHg^{-1}\}.
		\]
		
		Since $Q \subseteq [F,F]$, all elements of $Q$ are the identity outside of a sufficiently large interval. Thus we can choose $N\in \N$ such that $\mathrm{supp}(q)\subseteq [-N,N]$ for every $q\in Q$. Next, choose $M\ge 1$ large enough so that for every $z\in Z$ there exist $m_{z}^{-},m_{z}^{+}\in \Z$ such that
		\[
		z(x)=x+m_{z}^{-} \text{ for } x\le -M,  \ z(x)=x+m_{z}^{+} \text{ for }  x\ge M.
		\]
		For every $z\in Z$, denote by $r_z^{-},r_z^{+} \in F$ the maps $r_z^{-}(x)=x-m_z^{-}$ and $r_z^{+}(x)=x-m_z^{+}$ for $x\in \R$. Then $\mathrm{supp}(zr_z^{-})\subseteq [-M,+\infty)$ and $\mathrm{supp}(zr_z^{+})\subseteq (-\infty,M]$ for each $z\in Z$.
		
		Now let $b\in F$ be the translation by $-M-N-1$. Using the fact that $H$ is normalized by any translation, for every $z \in Z$ we have
		\begin{align*}
			bzHz^{-1}b^{-1}&=b(zr^{+}_z)H((r_z^{+})^{-1}z^{-1})b^{-1}  \\
			&=(b(zr^{+}_z)b^{-1}) H (b(zr^{+}_z)b^{-1})^{-1}.
		\end{align*}
		
		The support of $b(zr^{+}_z)b^{-1}$ is contained in $(-\infty, -N-1]$, while all elements in $Q$ have support contained in $[-N,N]$. Therefore, $b(zr^{+}_z)b^{-1}$ commutes with all elements in $Q$, implying that $bZ$ is contained in $\{g\in F : Q\cap H=Q\cap gHg^{-1}\}$. An analogous argument applies to $b^{-1}Z$. This verifies the hypotheses of Theorem \ref{thm: syndetic}, and the conclusion follows from the fact that the set of abelian subgroups of $\mathrm{Sub}(F)$ is closed in the Chabauty topology.
	\end{proof}

	\subsection{Amenable boundary $\mu$-SRSs for permutational wreath products}
	Let $A, B$ be countable groups and let $B \curvearrowright X$ be an action on a countable set $X$. The \emph{permutational wreath product} $G = A \wr_X B$ is the semidirect product $\left(\bigoplus_{x \in X} A\right) \rtimes B$, where $B$ acts on $\bigoplus_{x \in X} A$ by $b.(\varphi_x)_{x \in X} = (\varphi_{b^{-1}.x}))_{x \in X}$ for $b \in B$, $(\varphi_x)_{x \in X} \in \bigoplus_{x \in X}B$. Here, the group $B$ is identified with its canonical copy inside $A\wr_X B$, whereas $A$ is identified with its copy within the direct sum at a given fixed basepoint $o\in B$.
	
	Let $\mu \in \mathrm{Prob}(A \wr_X B)$. As in the case of wreath products, we say that the lamp configurations of the $\mu$-random walk on $A \wr_X B$ stabilize almost surely if there is an infinite orbit $\cO \subseteq X$ such that for $\P_{\mu}$-almost every trajectory $\mathbf{w} = (\varphi_n, b_n)_{n\geq 0}$ the configurations $(\varphi_n)_{n \geq 0}$ restricted to $\cO$ converge pointwise to a limit function $\varphi_\infty(\mathbf{w}) \in A^\cO$. 
	Stabilization of lamps occurs whenever the induced random walk $(b_n.x)_{n \geq 0}$ on $\cO$ is transient for every $x \in \cO$ and $\mu$ has finite first moment. In general, transience of $(b_n.x)_{n \geq 0}$ on $\cO$ can be more delicate than transience of the random walk on $(b_n)_{n \geq 0}$ on $B$ itself. The following proposition applies the criterion in Theorem \ref{thm: syndetic} to construct a symmetric measure $\mu$ with finite entropy such that this is the case.
	
	\begin{cor}\label{cor: permutational wreath products}
		Let $A, B$ be countable groups and let $B \curvearrowright X$ be a faithful action on a countable set $X$ with at least one infinite orbit $\cO$. Suppose that $A$ is not abelian. Then there exists a non-degenerate and symmetric probability measure on $A\wr_X B$ with finite Shannon entropy that admits an amenable boundary $\mu$-SRS distinct from $\delta_{\{e\}}$.
	\end{cor}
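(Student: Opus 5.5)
The plan is to verify the hypothesis of Theorem~\ref{thm: syndetic} for a suitable amenable subgroup $H$ of $G=A\wr_X B$, modelled on the wreath product construction of Proposition~\ref{prop: wreath products}. Since $A$ is non-abelian, fix $a\in A$ outside the center of $A$. For each $x\in X$ write $\delta_x^{y}\in\bigoplus_X A$ for the configuration with value $y$ at $x$ and $e_A$ elsewhere. Let
\[
H=\bigl\langle \delta_x^{a} : x\in\cO\bigr\rangle\le \bigoplus_{X}A .
\]
This is isomorphic to $\bigoplus_{\cO}\langle a\rangle$, hence abelian and therefore amenable. It is non-trivial, and it is normalized by $B$ because $\cO$ is $B$-invariant.

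The key computation is how conjugation by $g=(\varphi,c)$ acts on $H$. Since $c$ normalizes $H$, we have $gHg^{-1}=\varphi H\varphi^{-1}$, which is the group generated by the elements $\delta_x^{\varphi(x)a\varphi(x)^{-1}}$ for $x\in\cO$. Now fix finite sets $Q,Z\subseteq G$.

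Let $S\subseteq\cO$ be the finite union of the supports of the lamp parts of elements of $Q$, intersected with $\cO$. Let $F\subseteq\cO$ be the finite union of the supports of the lamp parts $\varphi_z$ for $z\in Z$.

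One point needs care. The set $Q\cap gHg^{-1}$ depends only on the coordinates of $gHg^{-1}$ in $S$. It also involves elements whose base part is trivial and whose lamps lie outside $\cO$, but $H$ has trivial coordinates there. So $Q\cap gHg^{-1}=Q\cap H$ whenever the lamp part of $g$ vanishes on $S$.

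For $g=bz$ with $b\in B$ and $z=(\varphi_z,c_z)$, the lamp part of $bz$ is $b\cdot\varphi_z$, which is supported on $b\cdot F$. The lamp part of $b^{-1}z$ is supported on $b^{-1}\cdot F$. It therefore suffices to find $b\in B$ with
\[
bF\cap S=\varnothing \quad\text{and}\quad b^{-1}F\cap S=\varnothing ,
\]
that is, $b$ must avoid the finitely many sets $\{b : bf=s\}$ and $\{b : b^{-1}f=s\}$ for $f\in F$ and $s\in S$. Each of these sets is a coset of a point stabilizer $\mathrm{Stab}_B(f)$, and each such stabilizer has infinite index because the orbit $\cO$ is infinite.

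B.\,H.~Neumann's lemma states that a group is never covered by finitely many cosets of infinite-index subgroups. Applying it produces the required $b$. Theorem~\ref{thm: syndetic} then yields a symmetric, non-degenerate, finite-entropy $\mu$ together with a boundary SRS supported on $\overline{\mathrm{Orb}_G(H)}$ and distinct from $\delta_{\{e\}}$. This SRS is supported on abelian, hence amenable, subgroups, because the set of abelian subgroups is closed in $\mathrm{Sub}(G)$.

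I expect the main obstacle to be this last step: justifying precisely that intersecting $Q$ with $gHg^{-1}$ only sees the coordinates in $S$, and handling the two-sided avoidance condition via Neumann's lemma. Faithfulness of the action does not seem to be needed for this argument. I would double-check this, and otherwise use faithfulness only to identify $B$ inside $G$.
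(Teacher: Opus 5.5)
Your proof is correct and follows the paper's overall strategy. You use the same subgroup $H=\bigoplus_{x\in\cO}\langle a\rangle$, normalized by $B$. You also make the same reduction of the hypothesis of Theorem~\ref{thm: syndetic} to finding $b\in B$ with $bF\cap S=b^{-1}F\cap S=\varnothing$; your $F,S$ are the paper's $F_Z,F_Q$.

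\textbf{Where you differ: how the displacement element $b$ is produced.}

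The paper argues probabilistically. Take a non-degenerate symmetric measure on $B$. By a lemma it cites from Chawla--Frisch (see also Le Boudec--Matte Bon), the transition probabilities $\P[w_n\cdot x=y]$ tend to $0$ on the infinite orbit $\cO$. Hence $w_nF\cap S=\varnothing$ with probability tending to $1$. Symmetry of the measure gives the same for $w_n^{-1}$, and a union bound yields $b$.

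You instead note that the set of bad $b$ is a finite union of cosets of stabilizers of points of $\cO$, all of infinite index. B.\,H.~Neumann's covering lemma then gives a good $b$.

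\textbf{What each route buys.} Your argument is purely algebraic. It treats $b$ and $b^{-1}$ in one stroke, with no symmetry assumption and no random-walk input. The paper's argument stays within its probabilistic toolkit and shows slightly more: a random $w_n$ works with probability close to $1$.

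\textbf{One small imprecision.} The set $\{b: b^{-1}f=s\}$ is a left coset of $\mathrm{Stab}_B(s)$, equivalently a right coset of $\mathrm{Stab}_B(f)$. It is not a left coset of $\mathrm{Stab}_B(f)$. This is harmless: Neumann's lemma applies to cosets of conjugate subgroups, which still have infinite index.

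\textbf{Other differences.} You handle arbitrary finite $Q,Z\subseteq G$ directly, via the formula $gHg^{-1}=\bigoplus_{x\in\cO}\varphi(x)\langle a\rangle\varphi(x)^{-1}$. This makes explicit the reduction to $Q,Z\subseteq\bigoplus_{\cO}A$, which the paper only asserts. You are also right that faithfulness of the action is used in neither argument.
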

	
	\begin{proof}
		
		Let us fix an element $a \in A$ that is not in the center of $A$, and consider the non-trivial amenable subgroup $H = \bigoplus_{x \in \cO} \langle a \rangle$ of $A\wr_X B$. We will show that $H$ satisfies the hypotheses of Theorem \ref{thm: syndetic}.

		First, we note that $H$ is normalized by both subgroups $\bigoplus_{x \in X \setminus \cO} A$ and $B$. Furthermore, $H$ is contained in the normal subgroup $\bigoplus_{x \in \cO} A$. In order to verify the conditions of Theorem \ref{thm: syndetic}, it suffices to show that for any finite subsets $Z,Q \subset \bigoplus_{x \in \cO} A$ we can find $b\in B$ such that  
		\[
		bZ,b^{-1}Z\subseteq \left\{g\in A\wr_X B : Q\cap H=Q\cap gHg^{-1}\right\}.
		\]
		
		Let $Z,Q \subset \bigoplus_{x \in \cO} A$ be finite subsets. Denote by $F_Z, F_Q \subset \cO$ the union of the supports of elements in $Z, Q$, respectively.
		\begin{claim}
			There exists $b \in B \subseteq A \wr_X B$ such that both $bF_Z$ and  $b^{-1}F_Z$ are disjoint from $F_Q$.
		\end{claim}
		\begin{proof}[Proof of the claim]
			Let $\mu$ be a non-degenerate symmetric probability measure on $B$, and denote by $\P_\mu$ the law of the $\mu$-random walk $(w_n)_{n \geq 0}$ on $B$. Since $\cO$ is infinite, it follows from \cite[Lemma 3.1]{ChawlaFrisch2025} (see also \cite[Claim 2.3]{GorokhovskyMatteBonTamuz2024}, or \cite[Lemma 3.2]{LeBoudecMatteBon2018}) that
			\[
			\lim_{n \to \infty} \P_\mu[w_n\cdot x = y] = 0
			\]
			for every $x \in F_Z$ and $y \in F_Q$. A union bound then gives 
			\[
			\lim_{n \to \infty} \P_\mu[w_n\cdot F_Z \cap F_Q = \emptyset] = 1.
			\]
			For every $n \ge 1$, the law of the $n$-th step of the random walk $w_n = g_1g_2\cdots g_n$ is $\mu^{\ast n}$. Since $\mu$ is symmetric, this is the same distribution as $w_n^{-1} = g_n^{-1}g_{n-1}^{-1}\cdots g_1^{-1}$. It then follows that
			\[
			\lim_{n \to \infty} \P_\mu[(w_n)^{-1}\cdot F_Z \cap F_Q = \emptyset] = 1.
			\]
			Another application of a union bound shows the existence of $b \in B$ such that
			\[
			b.F_Z \cap F_Q = b^{-1}\cdot F_Z \cap F_Q = \emptyset. \qedhere
			\]
		\end{proof}
		
		By an argument analogous to the one in the proof of Corollary \ref{cor: thompson F}, for every $z \in Z$ we can write
		\[
		bzHz^{-1}b^{-1} = (bzb^{-1}) Hb(bzb^{-1})^{-1},\]
		and similarly
		\[
		b^{-1}zHz^{-1}b = (b^{-1}zb) H (b^{-1}zb)^{-1}.
		\]
		By the previous claim, $bzb^{-1}$ and $b^{-1}zb$ both commute with every element in $Q$, so that $bZ$ and $b^{-1}Z$ are contained in $\{g \in A\wr_X B : Q \cap H = Q \cap gHg^{-1}\}$. We conclude that there exists an amenable boundary $\mu$-SRS distinct from $\delta_{\{e\}}$.
	\end{proof}

	Permutational wreath products share some algebraic similarities with the usual wreath products discussed in Subsection~\ref{subsection: wreath}, but their geometric and dynamical properties can differ significantly, particularly regarding the behavior of random walks.
	
	Consider a random walk on $A \wr_X B$, and let $b_n = x_1 \cdots x_n$ be its projection to $B$ for every $n \ge 1$, where $(x_n)_{n\ge1}$ is the sequence of independent increments in $B$. In the case of usual wreath products, the walk can be visualized as a lamplighter moving along the Cayley graph of the base group $B$, modifying lamps near the vertices it visits. For permutational wreath products, however, such an interpretation in terms of a walk on the Schreier graph of $X$ does not work. Indeed, the lamp configurations are modified along the so-called \emph{inverted orbit}
	\[
	o, \quad x_1^{-1}.o, \quad x_1^{-1}x_2^{-1}.o, \quad x_1^{-1}x_2^{-1}x_3^{-1}.o, \ \ldots,
	\]
	which does not satisfy the Markov property and does not follow a connected path in the Schreier graph. This makes the analysis of random walks more subtle, but also gives rise to richer geometric phenomena. For instance, when $B=G_{012}$ is the first Grigorchuk group and $X$ is a particular orbit of its action on the boundary of the rooted binary tree, the growth of inverted orbits was used by Bartholdi-Erschler to show that certain permutational wreath products, including $\mathbb{Z}/2\mathbb{Z}\wr_X G_{012}$, have intermediate growth, and to compute the asymptotics of their growth functions \cite[Theorem A]{BartholdiErschler2012}. These were the first groups of intermediate growth for which such asymptotics were determined. Later, the same authors used permutational wreath products to construct examples of finitely generated groups of exponential growth, such as $\mathbb{Z}/2\mathbb{Z}\wr_{X\times X} (G_{012}\times G_{012})$, for which every finitely supported probability measure (not necessarily non-degenerate) has a trivial Poisson boundary \cite[Theorem A]{BartholdiErschler2017}. 
	This answered a previously open question by showing that not every group of exponential growth admits a finitely supported measure with non-trivial Poisson boundary.

\section{Boundary SRSs are supported on normalish subgroups} \label{sec:normalish}
In this section we prove Theorem \ref{thm: normalish}, which relates normalish subgroups of a group $G$ to boundary SRSs on $\mathrm{Sub}(G)$. We also show that many C*-simple Baumslag-Solitar groups have amenable normalish subgroups even though their space of amenable subgroups is countable, so all their amenable $\mu$-SRSs are trivial. We finally show a different proof of triviality of amenable $\mu$-boundaries for groups with property (CS), an operator-algebraic property of groups formulated in terms of their unitary representations which was introduced in \cite{BreuillardKalantarKennedyOzawa2017}.

\subsection{The proof of Theorem \ref{thm: normalish}}
Recall that the \emph{FC-center} of a group $G$ is the subgroup $\mathrm{FC}(G)$ consisting of all elements of $G$ with finite conjugacy class. A group action $G \curvearrowright X$ by homeomorphisms of a compact Hausdorff space $X$ is called \emph{equicontinuous} if the image of $G$ in $\mathrm{Homeo}(X)$ is relatively compact for the compact-open topology.

\begin{proof}[Proof of Theorem \ref{thm: normalish}]
	Let $\mu$ be a non-degenerate probability measure on $G$ and let $\eta$ be a $\mu$-stationary probability measure on $\mathrm{Sub}(G)$ such that the space $(\sub{G},\eta)$ is a $\mu$-boundary of $G$. If $\eta$ is atomic, then it must be a Dirac mass on an infinite normal subgroup, and the result follows. We therefore assume that $\eta$ is non-atomic. Let $g_1, \ldots, g_m \in G$. We will show that for every $\varepsilon>0$ we have
	\begin{equation}\label{eq: normalish eps}
		\eta\left(\left\{ H \in \mathrm{Sub}(G) : \bigcap_{i=1}^m g_i Hg_i^{-1}\text{ is finite} \right\} \right)\le \varepsilon.
	\end{equation}
	Since $\varepsilon$ is arbitrary, the above measure must be $0$, and as $G$ is countable it follows that $\eta$ is supported on normalish subgroups of $G$.
	
	\begin{claim}
		The measure $\eta$ is not supported in the FC-center $\mathrm{FC}(G)$.
	\end{claim}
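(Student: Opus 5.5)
The plan is to argue by contradiction, using that the conjugation action of $G$ on $\mathrm{Sub}(\mathrm{FC}(G))$ preserves a compatible metric, and that a $\mu$-boundary on which $G$ acts by isometries must be a Dirac mass. At this point of the proof of Theorem~\ref{thm: normalish} we already know that $\eta$ is non-atomic, and such a Dirac mass would contradict this.

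\emph{Reduction to full measure.} The set $\mathrm{Sub}(\mathrm{FC}(G))=\{H\in\sub{G}: H\subseteq \mathrm{FC}(G)\}$ is closed in the Chabauty topology. It is $G$-invariant because $\mathrm{FC}(G)$ is normal. Suppose $\eta(\mathrm{Sub}(\mathrm{FC}(G)))>0$. Since $\mu$-boundaries are ergodic (as used in the proof of Proposition~\ref{prop: invariant is delta}), $\eta$ then gives full mass to $\mathrm{Sub}(\mathrm{FC}(G))$. So we may regard $(\mathrm{Sub}(\mathrm{FC}(G)),\eta)$ as a $\mu$-boundary of $G$. The goal is to show that $\eta$ is then a Dirac mass, contradicting non-atomicity.

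\emph{An invariant metric.} Every element of $\mathrm{FC}(G)$ has a finite conjugacy class. Hence $\mathrm{FC}(G)$ is exhausted by an increasing sequence $C_1\subseteq C_2\subseteq\cdots$ of finite subsets, each a union of finitely many conjugacy classes; in particular $gC_ng^{-1}=C_n$ for all $g\in G$. For $H\neq K$ in $\mathrm{Sub}(\mathrm{FC}(G))$, set
\[
d(H,K)=2^{-n}, \qquad n=\min\{m : H\cap C_m\neq K\cap C_m\},
\]
and $d(H,H)=0$. This is an ultrametric inducing the Chabauty topology on $\mathrm{Sub}(\mathrm{FC}(G))$, since the $C_n$ exhaust $\mathrm{FC}(G)$. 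Moreover, $gHg^{-1}\cap C_m=g(H\cap C_m)g^{-1}$ because $C_m$ is conjugation-invariant. Therefore $d(gHg^{-1},gKg^{-1})=d(H,K)$, and $G$ acts by isometries; in particular the action is equicontinuous.

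\emph{Conclusion.} Consider the quantity $I(\nu)=\iint d(x,y)\,\dd\nu(x)\dd\nu(y)$ for $\nu$ a probability measure on $\mathrm{Sub}(\mathrm{FC}(G))$. Since $G$ acts isometrically, $I((w_n)_\ast\eta)=I(\eta)$ for every $n$. Because $d$ is continuous and bounded, $I$ is weak*-continuous. By the boundary property, $\P_\mu$-almost surely $(w_n)_\ast\eta\to\delta_{\xi(\mathbf w)}$, so $I((w_n)_\ast\eta)\to 0$. Hence $I(\eta)=0$, which means $\eta\otimes\eta$ is concentrated on the diagonal, i.e.\ $\eta$ is a Dirac mass. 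This contradicts non-atomicity, so $\eta(\mathrm{Sub}(\mathrm{FC}(G)))=0$, and in particular $\eta$ is not supported in $\mathrm{FC}(G)$.

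The only step needing care is that $d$ really is a compatible metric. I expect this to be routine from the exhaustion by conjugation-invariant finite sets.
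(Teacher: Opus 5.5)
Your proof is correct, but it follows a different route from the paper's.

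\textbf{The paper's route.} The paper first observes that every basic clopen set $\{H : Q\cap H = Q\cap K\}$ with $Q\subseteq \mathrm{FC}(G)$ finite has a finite $G$-orbit. Using the description of odometers as projective limits of actions on finite sets, it deduces that $G\curvearrowright \mathrm{Sub}(\mathrm{FC}(G))$ is equicontinuous. It then invokes the Furstenberg--Glasner theorem that every $\mu$-stationary measure on an equicontinuous system is $G$-invariant. Finally, Proposition~\ref{prop: invariant is delta} shows that $\eta$ would be a Dirac mass, which is excluded.

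\textbf{Your route.} You make the equicontinuity explicit and stronger. You build a $G$-invariant compatible ultrametric from an exhaustion of $\mathrm{FC}(G)$ by finite unions of conjugacy classes, so the action is isometric. You then replace both the Furstenberg--Glasner theorem and Proposition~\ref{prop: invariant is delta} with the energy argument. The quantity $I(\nu)=\iint d\,\dd\nu\,\dd\nu$ is constant along $(w_n)_\ast\eta$, yet it tends to $I(\delta_{\xi(\mathbf w)})=0$. The weak*-continuity of $I$ rests on the weak*-continuity of $\nu\mapsto\nu\otimes\nu$ on a compact metric space, which is standard.

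\textbf{Comparison.} Your argument is self-contained and elementary. It proves the general fact that a $\mu$-boundary of an isometric action is a point mass, using the boundary hypothesis directly. The paper's route gives more: every $\mu$-stationary measure on $\mathrm{Sub}(\mathrm{FC}(G))$ is invariant, not only boundary ones. The cost is citing external results.

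You also prove the stronger statement $\eta(\mathrm{Sub}(\mathrm{FC}(G)))=0$ via ergodicity. The paper only rules out $\eta(\mathrm{Sub}(\mathrm{FC}(G)))=1$. That weaker statement is all that is used afterwards, to find $h\notin\mathrm{FC}(G)$ with $\eta(\{H : h\in H\})>0$.
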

	\begin{proof}[Proof of the claim]
		Consider the closed subspace $\mathrm{Sub}(\mathrm{FC}(G))$, and notice that the $G$-orbit of any open set of the form 
		\[
		\{H \in \mathrm{Sub}(\mathrm{FC}(G)) : Q \cap H = Q \cap K\}, \, Q \subseteq \mathrm{FC}(G) \text{ finite and }K \in \mathrm{Sub}(\mathrm{FC}(G))
		\]
		is finite by the definition of $\mathrm{FC}(G)$. Since these sets constitute a basis of the topology of $\mathrm{Sub}(\mathrm{FC}(G))$, we conclude that the action of $G$ on the clopen subsets of $\mathrm{Sub}(\mathrm{FC}(G))$ has only finite orbits. This implies that the action is equicontinuous: indeed, it is an odometer in the sense of \cite[Définition 2.1.3]{Cornulier2014}. By \cite[Fait 2.1.4 (iii)]{Cornulier2014} it follows that $\mathrm{Sub}(\mathrm{FC}(G))$ is a projective limit of actions of $G$ on finite sets, so it must be equicontinuous. Hence any $\mu$-stationary measure on $\mathrm{Sub}(\mathrm{FC}(G))$ is invariant by \cite[Theorem 7.4]{FurstenbergGlasner2010}. Thus $\eta$ cannot be supported on $\mathrm{FC}(G)$, since any $\mu$-boundary which is also an invariant measure is a Dirac mass.
	\end{proof}
	Thus there exists $h \in G$ with infinite conjugacy class such that 
	\[
	c \coloneqq \eta(\{H \in \mathrm{Sub}(G): h \in H\})>0.
	\]
	
	Denote by $\mathbf{bnd} \colon G^\N\to \sub{G}$ the boundary map associated with $\eta$. Given a trajectory $\mathbf{w}=(w_n)_{n \geq 0} \in G^{\N}$ of the $\mu$-random walk on $G$, we can write
	\[
	\eta(\{H \in \sub{G} : h \in w_nHw_n^{-1}\}) = (w_n)_* \eta(\{H \in \sub{G} : h \in H\})
	\]
	for every $n\in \N$. The fact that $(\sub{G}, \eta)$ is a $\mu$-boundary implies
	\[
	\eta(\{H \in \sub{G} : h \in w_nHw_n^{-1}\}) \xrightarrow[n\to\infty]{} \delta_{\mathbf{bnd}(\mathbf{w})}(\{H \in \sub{G} : h \in H \})
	\]
	for $\mathbb{P}_{\mu}$-almost every trajectory $\mathbf{w} \in G^\N$. In particular, we have
	\begin{equation}\label{eq:boundaryCvg}
		\eta\left(\{H \in \sub{G} : h \in w_nHw_n^{-1}\}\right) \xrightarrow[n\to\infty]{}  1
	\end{equation}
	for every $\mathbf{w}=(w_n)_{n \geq 0}\in E\coloneqq \mathbf{bnd}^{-1}\left(\{H \in \sub{G} : h \in H\}\right)$, where
	\[
	\eta(E)=\mathbb{P_{\mu}}(\mathbf{bnd}^{-1}(\{H \in \sub{G} : h \in H\})) = \eta (\{ H \in \sub{G} : h \in H\}) = c >0.
	\]
	
	Consider one such trajectory $(w_n)_{n \geq 0}\in E$. The set $\{w_n^{-1}hw_n : n \geq 0\}$ is almost surely infinite because $h$ has infinite conjugacy class.
	
	Let $j \geq 2$. Since $\mu$ is non-degenerate, the action $G \curvearrowright (\mathrm{Sub}(G), \eta)$ is non-singular. Thus we can choose $\delta_j > 0$ such that for every measurable subset $A \subseteq \mathrm{Sub}(G)$ with $\eta(A) \geq 1 - \delta_j$ we have
	\[
	(g_i)_\ast\eta(A) \geq 1 - \frac{\varepsilon}{m 2^j}
	\]
	for every $i = 1,\ldots, m$. Using the convergence from Equation \eqref{eq:boundaryCvg}, we inductively construct an increasing sequence $(n_k)_{k \geq 2} \subseteq \N$ as follows. Let $n_2 \in \N$ be such that
	\[
	\eta(\{H \in \mathrm{Sub}(G) : w_{n_2}^{-1}hw_{n_2} \in H\}) \geq 1 - \delta_2.
	\]
	If $n_2, \ldots, n_k$ are already defined, find $n_{k+1} > n_k$ such that
	\[
	w_{n_{k+1}}^{-1}hw_{n_{k+1}}\notin \left\{h_{w_{n_2}},\ldots, h_{w_{n_k}} \right\} \, \text{ and }\, \eta(\{H \in \mathrm{Sub}(G) : w_{n_{k+1}}^{-1}hw_{n_{k+1}} \in H\}) \geq 1 - \delta_{k+1}.
	\]
	
	Our choice of $\delta_k$ guarantees that
	\begin{equation}\label{eq:csinequality}
		\eta(\{H : w_{n_k}^{-1}h w_{n_k} \in g_iHg_i^{-1}\}) = (g_i)_\ast \eta(\{H : w_{n_k}^{-1}hw_{n_k} \in H\}) \geq 1 - \frac{\varepsilon}{m 2^k}
	\end{equation}
	holds for every $k \geq 2$ and $i = 1, \ldots, m$. A union bound then shows that
	\[
	\eta\left(\left\{H \in \mathrm{Sub}(G) : \  w_{n_k}^{-1}h w_{n_k} \not \in \bigcap_{i = 1}^m g_iHg_i^{-1}\right\}\right) \leq\frac{\varepsilon}{2^k}
	\]
	for all $k \geq 2$, and hence that
	\[
	\eta\left(\left\{H \in \mathrm{Sub}(G) : \  (w_{n_k}^{-1}hw_{n_k})_{k \geq 2} \not \subseteq \bigcap_{i = 1}^m g_iHg_i^{-1}\right\}\right) \leq \epsilon,
	\]
	as desired, concluding the proof.
\end{proof}

\subsection{Baumslag-Solitar groups and normalish subgroups}
The Baumslag-Solitar groups are a family of one-relator groups
\[
\mathrm{BS}(m,n) = \langle a,t \mid ta^mt^{-1} = a^n\rangle,\, m,n \in \Z \setminus \{0\}
\]
introduced by Baumslag-Solitar in \cite{BaumslagSolitar1962} that has been well-studied from geometric and algebraic perspectives \cite{Whyte2001, levittAutomorphisms}. They are known to be C*-simple \cite[Proposition 5]{deLaHarpePreaux2011} whenever $m,n$ are both different from $\pm 1$ and $m \neq \pm n$. The infinite cyclic subgroup $\langle a \rangle$ is always normalish since $\langle a \rangle \cap g \langle a \rangle g^{-1}$ has finite index in $\langle a \rangle$ for all $g \in \mathrm{BS}(m,n)$ (see \cite[Section 2]{levittAutomorphisms} for instance).

Proposition \ref{prop:countableSubam} below finds the parameters $m,n$ such that $\mathrm{Sub}_\mathrm{am}(\mathrm{BS}(m,n))$ is countable. In particular, we deduce the following.

\begin{cor} \label{cor:noSRSs}
	The group $\mathrm{BS}(2,3)$ is C*-simple, admits an amenable normalish subgroup and all its amenable $\mu$-SRSs are trivial for every non-degenerate $\mu \in \mathrm{Prob}(\mathrm{BS}(2,3))$.
\end{cor}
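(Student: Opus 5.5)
The corollary has three parts: C*-simplicity, the existence of an amenable normalish subgroup, and triviality of all amenable $\mu$-SRSs. I treat them separately.

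\emph{C*-simplicity.} This follows from \cite[Proposition 5]{deLaHarpePreaux2011}: for $\mathrm{BS}(2,3)$ we have $m=2$, $n=3$, both different from $\pm1$, and $2\neq\pm3$.

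\emph{Amenable normalish subgroup.} I take $\langle a\rangle\cong\Z$, which is amenable. For every $g$, the intersection $\langle a\rangle\cap g\langle a\rangle g^{-1}$ has finite index in $\langle a\rangle$, as recalled above. Given a finite $Z\subseteq G$, the intersection $\bigcap_{z\in Z} z\langle a\rangle z^{-1}$ contains $\bigcap_{z\in Z}\bigl(\langle a\rangle\cap z\langle a\rangle z^{-1}\bigr)$. This is a finite intersection of finite-index subgroups of $\Z$, so it has finite index in $\Z$ and is infinite. Hence $\langle a\rangle$ is normalish.

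\emph{Triviality of amenable SRSs.} The main input is Proposition~\ref{prop:countableSubam}, stated below, which I take to include $(m,n)=(2,3)$ among the parameters for which $\subam{\mathrm{BS}(2,3)}$ is countable. Verifying this is the real obstacle, and it presumably rests on Cornulier's result; here I only use it.

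Let $\mu$ be non-degenerate and let $\eta$ be a $\mu$-stationary measure on the countable set $\subam{G}$, so $\eta$ is purely atomic. Choose $H_0$ of maximal $\eta$-mass $M>0$; this exists because the masses are summable. Stationarity gives
\[
M=\eta(\{H_0\})=\sum_{g}\mu(g)\,\eta(\{g^{-1}H_0g\}).
\]
Each term on the right is at most $M$, so $\eta(\{g^{-1}H_0g\})=M$ for all $g\in\mathrm{supp}\,\mu$. Iterating, the same holds for all $g$ in the semigroup generated by $\mathrm{supp}\,\mu$, which is $G$. Since each point of the conjugation orbit of $H_0$ carries mass $M$, that orbit is finite.

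Thus $H_0$ has finite-index normalizer, and its normal core $\bigcap_{g}gH_0g^{-1}$ is a finite intersection of conjugates. I claim $H_0$ is contained in the amenable radical. If $H_0$ is amenable with finitely many conjugates, then the subgroup $N$ generated by these conjugates is normal, and I want $N$ amenable. A direct route is to use that $\mathrm{BS}(2,3)$ is C*-simple: then $G$ has trivial amenable radical and no non-trivial amenable normalish-type finite-orbit subgroups in $\subam{G}$. More precisely, by Kennedy's criterion quoted in the introduction, the only minimal closed invariant subset of $\subam{G}$ is $\{\{e\}\}$. A finite orbit is closed and invariant, hence contains a minimal one, so it must be $\{\{e\}\}$. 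Therefore $H_0=\{e\}$.

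Applying the same argument to $\eta$ with the atom $\{e\}$ removed and renormalized, which is still stationary because $\{e\}$ is a fixed point, shows inductively that every atom lies in a finite orbit. Every such orbit must equal $\{\{e\}\}$, so $\eta=\delta_{\{e\}}$.
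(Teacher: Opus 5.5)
Your proof is correct, but the third part takes a different route from the paper's. The first two parts agree with the paper, which simply cites de la Harpe--Pr\'eaux and Levitt; your explicit finite-intersection argument for the normalish-ness of $\langle a\rangle$ is fine.

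The paper's argument for the third part runs as follows.
\begin{enumerate}
\item It reduces to ergodic $\eta$ by ergodic decomposition.
\item It uses countability of $\subam{G}$ to obtain an atom.
\item It uses ergodicity to conclude that $\eta$ lives on a finite orbit of some amenable $H$.
\item It rules out $H\neq\{e\}$ by asserting that the normal closure of $H$ is a non-trivial amenable normal subgroup, which contradicts the triviality of the amenable radical of a C*-simple group.
\end{enumerate}

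You differ at two points:
\begin{enumerate}
\item You avoid ergodic decomposition with a maximum principle on atoms, the same device as in Proposition~\ref{prop: invariant is delta}. The atoms of maximal mass are permuted by $\mathrm{supp}\,\mu$, hence by $G$, so they form a finite orbit.
\item You rule out a non-trivial finite orbit by Kennedy's criterion, since a finite orbit in $\subam{G}$ is itself a minimal closed invariant set.
\end{enumerate}

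Your route is lighter on the measure-theoretic side. It also bypasses a step the paper leaves implicit: that a non-trivial amenable $H$ with finitely many conjugates yields a non-trivial amenable normal subgroup. That step needs a short argument, for instance:
\begin{enumerate}
\item Pass to the normal core $C$ of $N_G(H)$. The conjugates of $H\cap C$ are finitely many amenable normal subgroups of $C$, so their product is an amenable normal subgroup of $G$, hence trivial.
\item It follows that $H$ embeds in the finite group $G/C$, so $H$ is finite.
\item A finite subgroup with finite-index normalizer lies in the FC-centre, which is amenable and normal, hence trivial; so $H$ is trivial.
\end{enumerate}
The price of your route is that it invokes Kennedy's deep characterisation, whereas the paper's needs only the trivial amenable radical.

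Two presentational points:
\begin{enumerate}
\item Discard the abandoned claim that $H_0$ lies in the amenable radical, and the vague phrase about ``normalish-type finite-orbit subgroups''. The Kennedy argument alone does the job.
\item Your final ``induction'' is a single step. Applying the maximal-atom argument to the renormalised remainder $\eta'$ would produce a finite orbit not containing $\{e\}$, which is impossible, so $\eta(\{\{e\}\})=1$.
\end{enumerate}
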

\begin{proof}
	The first two statements are already known from \cite[Proposition 5]{deLaHarpePreaux2011} and \cite[Section 2]{levittAutomorphisms}, and Proposition \ref{prop:countableSubam} below shows that $\mathrm{BS}(2,3)$ has countably many amenable subgroups. Let $\mu$ be a non-degenerate probability measure on $G = \mathrm{BS}(2,3)$, and let $\eta$ be an ergodic $\mu$-SRS on $\subam{G}$. Since $\eta$ is supported on a countable set it must have an atom, and by ergodicity $\eta$ must be supported in a finite orbit of some $H \in \subam{G}$. If $H$ were non-trivial, then the normal subgroup generated by $H$ would be a non-trivial normal amenable subgroup of $G$, contradicting the C*-simplicity of $\mathrm{BS}(2,3)$. Thus, $H$ is trivial and so is $\eta$. The case when $\eta$ is non-ergodic follows from ergodic decomposition.
\end{proof}

\begin{prop} \label{prop:countableSubam}
	Let $m,n \in \Z \setminus \{0\}$ and $G = \mathrm{BS}(m,n)$. Then $G$ has only countably many amenable subgroups if and only if either
	\begin{itemize}
		\item $n$ or $m$ is equal to 1 or $-1$, or
		\item  $n = m$ or $n = -m$, or 
		\item $m/n$ and $n/m$ are not integers.
	\end{itemize}
\end{prop}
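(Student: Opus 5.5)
The plan is to use the action of $G=\mathrm{BS}(m,n)$ on its Bass--Serre tree $T$ and to classify amenable subgroups by how they act on $T$. An amenable subgroup $K$ cannot contain a free subgroup of rank two, so by the standard trichotomy for groups acting on trees, $K$ falls into one of three classes:
\begin{enumerate}
\item $K$ fixes a vertex, in which case it is conjugate into $\langle a\rangle\cong\Z$;
\item $K$ leaves invariant a line on which it acts by translations, in which case $K$ is virtually $\Z^2$ or $\Z$, or a Klein-bottle-type group;
\item $K$ fixes a unique end, in which case $K$ is an ascending union of elliptic subgroups together with possibly a hyperbolic element.
\end{enumerate}

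Subgroups of the first class are conjugates of subgroups of $\Z$, so there are only countably many of them. Subgroups of the second class are finitely generated, since they are virtually polycyclic, and a countable group has only countably many finitely generated subgroups. The whole question therefore concentrates on the third class: subgroups fixing an end $\xi$ without being finitely generated. Such a subgroup is either a strictly increasing union of vertex stabilizers along a ray to $\xi$, or of the form $\langle t'\rangle \ltimes (\text{such a union})$ for a hyperbolic $t'$ translating towards or away from $\xi$. Since the ends of $T$ are uncountable, countability holds exactly when, in the non-exceptional cases, no such infinitely generated end-fixing subgroups arise.

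Next I compute along a ray $v_0,v_1,\dots$. The pointwise stabilizer of an edge is conjugate to $\langle a^m\rangle$ or $\langle a^n\rangle$, with index $|m|$ or $|n|$ in the adjacent vertex group. An increasing union $\bigcup_k \mathrm{Stab}(v_k)\cap\cdots$ along the ray becomes infinitely generated and non-finitely-generated-locally-cyclic only if infinitely often the edge group equals the \emph{whole} next vertex group, that is, the inclusion $\langle a^{n}\rangle\subseteq\langle a\rangle$ has index $1$ in one direction while strictly enlarging in the other. Concretely:
\begin{itemize}
\item If $|m|=1$ or $|n|=1$, then $G$ is solvable, $\Z[1/n]\rtimes\Z$-like. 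Here I would invoke Cornulier's result \cite{CornulierMSE2021} that every subgroup is finitely generated or lies in an explicit countable family. One can check directly that $\Z[1/k]$ has only countably many subgroups, so countability holds.
\item If $m=\pm n$, then $\langle a^m\rangle$ is normal and the quotient is virtually free; amenable subgroups are then virtually cyclic extensions of subgroups of $\Z$, hence countably many.
\item If $m\mid n$ or $n\mid m$ properly, say $n=km$ with $|k|,|m|\ge2$, I construct uncountably many subgroups. Choosing at each step along a ray one of the $|m|\ge2$ cosets gives a Cantor set of rays. Along each ray, conjugates of $a$ nest via $t a^m t^{-1}=a^{km}$, producing increasing unions $\bigcup_k g_k\langle a\rangle g_k^{-1}$ that are isomorphic to subgroups of $\Q$. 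These are abelian, hence amenable, and distinct rays give distinct subgroups.
\item If neither ratio is an integer, the vertex-group indices never allow an edge group to be an entire neighbouring vertex group in a consistent direction. Every elliptic chain then stabilizes, so end-fixing amenable subgroups are finitely generated and there are countably many.
\end{itemize}

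The main obstacle is the last two cases: making precise when an increasing chain of point stabilizers along a ray is strictly increasing, and showing that in the non-divisible case every amenable end-fixing subgroup is finitely generated. I would handle this with the modular homomorphism $\Delta\colon G\to\Q^\times$, $a\mapsto1$, $t\mapsto n/m$, together with the observation that an elliptic element $g$ fixing the end satisfies $g\langle a\rangle g^{-1}\cap\langle a\rangle$ of index controlled by $\Delta$. Strict growth requires powers of $n/m$ to have integral numerator or denominator, which fails precisely when neither $m\mid n$ nor $n\mid m$. This is where I expect to rely on Cornulier's unpublished argument.
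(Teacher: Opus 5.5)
Your outline follows the paper's: Tits' trichotomy on the Bass--Serre tree, countably many vertex- and line-stabilizing subgroups, reduction to the elliptic part $K_\xi=\bigcup_j\mathrm{Fix}(\xi_{[j,\infty)})$ of an end stabilizer, and uncountably many abelian end-fixing subgroups in the divisible case. The gap is in the non-divisible case, where your criterion for strict growth of this chain is false. You require an edge group to equal a whole vertex group infinitely often. Edge groups have index $|m|$ and $|n|$ in the adjacent vertex groups, so this can only happen when $|m|=1$ or $|n|=1$. Your criterion would then show that $\mathrm{BS}(2,4)$ has countably many amenable subgroups, contradicting your own third bullet.

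What governs growth is how exponents rescale. If $g=wa^{\zeta_g(v)}w^{-1}$ fixes $v=w\langle a\rangle$, then on the fixed subtree $\mathcal{T}_g$ one has $\zeta_g(v')=(m/n)^{h(v')-h(v)}\zeta_g(v)$, where $h$ is the $t$-exponent height. Write $m/n=p/q$ in lowest terms; the non-divisible case is exactly $|p|,|q|\ge 2$. Integrality then bounds $h$ on $\mathcal{T}_g$, hence $|\zeta_g|\le M$ there. If $u^N=g\neq e$, then $u$ is elliptic and fixes some $v'\in\mathcal{T}_g$, so $N\le N|\zeta_u(v')|=|\zeta_g(v')|\le M$. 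Each $\mathrm{Fix}(\xi_{[j,\infty)})$ is trivial or infinite cyclic, so a chain that grows strictly infinitely often would give a non-trivial element with roots of unbounded order. Hence $K_\xi$ is cyclic and $\mathrm{Stab}(\xi)$ is polycyclic.

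This bounded-roots lemma is Cornulier's claim, which the paper reproves. Your last paragraph points toward it via the modular character but neither states nor proves it. Your first bullet attributes Cornulier's result to $\mathrm{BS}(1,n)$, where it plays no role. There your direct argument suffices, once you note that a subgroup of $\Z[1/n]\rtimes\Z$ is determined by its intersection with $\Z[1/n]$ and one lift of a generator of its image in $\Z$.

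The divisible-case construction also needs repair. The groups $g_j\langle a\rangle g_j^{-1}$ along a ray are vertex stabilizers. Consecutive ones meet in an edge group of index at least $2$ in each, so they are not nested, and together they generate an amalgam containing free subgroups. The nested groups come from $a^m$. After swapping $m$ and $n$ if needed, write $n=km$ with $|k|,|m|\ge 2$, and set $u_j=a^{n_1}t^{-1}\cdots a^{n_j}t^{-1}$ with $0\le n_i<|m|$. The relation gives $(u_{j+1}a^mu_{j+1}^{-1})^k=u_ja^mu_j^{-1}$, so $\bigcup_j\langle u_ja^mu_j^{-1}\rangle\cong\Z[1/|k|]$, which is abelian.

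Distinctness for different sequences also needs an argument. Every height-$0$ vertex of $\mathcal{T}_{a^m}$ has $\zeta_{a^m}=m$, and $n\nmid m$, so the fixed tree of $a^m$ lies in heights $\le 0$. Hence $u_ja^mu_j^{-1}$ fixes no end of a ray from $\langle a\rangle$ that avoids $u_j\langle a\rangle$. Yet every element of the group attached to such a ray fixes that ray's end, while $u_ja^mu_j^{-1}$ lies in the group of every ray through $u_j\langle a\rangle$. Note that $\mathcal{T}_{a^m}$ is strictly larger than the downward $|m|$-ary tree: $a^m$ commutes with $t^{-1}at$, so it also fixes the height-$0$ vertex $t^{-1}at\langle a\rangle$. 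The paper's proof asserts equality there, but only the height bound is actually used.
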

The proof of Proposition \ref{prop:countableSubam} uses a result of Y. Cornulier \cite{CornulierMSE2021} which states that, whenever neither $m/n$ nor $n/m$ are integers, the group $\mathrm{BS}(m,n)$ has no non-trivial element admitting infinitely many roots. We replicate the proof of this statement below for the convenience of the reader.

In the proof of Proposition \ref{prop:countableSubam} we will use the action of $\mathrm{BS}(m,n)$ on the \emph{Bass-Serre tree $\cT$ associated with its decomposition as an HNN-extension}. For our purposes, the tree $\cT$ is the tree with vertex set $\mathrm{BS}(m,n)/\langle a \rangle$ and oriented edges connecting $w \langle a \rangle$ with $wa^lt \langle a \rangle$ for all $w \in \mathrm{BS}(m,n)$ and $0 \leq l \leq n-1$. Left multiplication induces an action of $\mathrm{BS}(m,n)$ on $\cT$ through directed graph automorphisms. Bass-Serre theory ensures that $\cT$ is indeed a tree and that the action of $\mathrm{BS}(m,n)$ is transitive on oriented edges and vertices. Thus the stabilizers of vertices (resp. edges) are conjugates of $\langle a \rangle$ (resp. $\langle a^m \rangle$). A non-trivial element of $\mathrm{BS}(m,n)$ stabilizing no vertex on $\cT$ is said to be \emph{loxodromic}, and is said to be \emph{elliptic} otherwise. We refer to \cite[Chapitre I]{Serre1977} as the standard reference in Bass-Serre theory, and to \cite[Sections 1 \& 2]{CarderiGaboriauLeMaitreStalder2025} for Baumslag-Solitar groups and their Bass-Serre trees.

Let us recall the following lemma, which is a small variation on \cite[Proposition2.4]{CarderiGaboriauLeMaitreStalder2025}.
\begin{lema} \label{lema:countablesub}
	If $G$ is a countable group with a normal subgroup $N \subseteq G$ such that $N$ has countably many amenable subgroups and every subgroup of $G/N$ is finitely generated, then $G$ has countably many subgroups.
\end{lema}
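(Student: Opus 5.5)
The plan is to parametrize subgroups $K \le G$ by a pair of data, each of which ranges over a countable set. Write $\pi\colon G \to G/N$ for the quotient map. Every subgroup of $G/N$ is finitely generated by hypothesis, so $\pi(K)$ is finitely generated. Choose $k_1,\dots,k_r \in K$ with $\pi(K) = \langle \pi(k_1),\dots,\pi(k_r)\rangle$. Then
\[
K = \langle K\cap N,\ k_1,\dots,k_r\rangle .
\]
Indeed, for $x \in K$ there is a word $u$ in the $k_i$ with $\pi(u)=\pi(x)$, so $u^{-1}x \in K\cap N$. Hence $K$ is determined by the pair $\bigl(K\cap N, (k_1,\dots,k_r)\bigr)$.

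The second coordinate ranges over $\bigcup_{r\ge 0} G^r$, which is countable because $G$ is countable. So the whole argument reduces to a single step: $K\cap N$ must range over a countable family of subgroups of $N$. Given that step, the map $K \mapsto (K\cap N, \text{tuple})$ is injective into a countable set, and the conclusion follows.

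That step is the main obstacle, and it is where the hypothesis has to be used. The hypothesis only says that $N$ has countably many \emph{amenable} subgroups. If $K$ is amenable, then $K\cap N$ is an amenable subgroup of $N$, so it lies in that countable family, and the argument closes. I would therefore first carry out the argument for amenable $K$, or more generally for any class of subgroups closed under intersection with $N$ for which the hypothesis controls $K\cap N$.

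For an arbitrary $K$, however, nothing in the hypothesis constrains $K\cap N$, and in fact the literal conclusion fails. Take $G=N=F_2$ and $G/N$ trivial. The amenable subgroups of $F_2$ are the trivial group and the cyclic subgroups, so there are countably many of them, yet $F_2$ has uncountably many subgroups. So I would not try to force the general case. Instead I would prove the statement in the form the argument actually yields, and which is what Proposition~\ref{prop:countableSubam} needs: under the stated hypotheses, $G$ has countably many amenable subgroups, and $G$ has countably many subgroups whenever $N$ itself has countably many subgroups. I would record explicitly that the word ``amenable'' must be read into the conclusion of the lemma as worded.
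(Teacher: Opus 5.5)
Your argument is the paper's: a subgroup is determined by its intersection with $N$ together with lifts of a finite generating set of its image in $G/N$. The paper's proof likewise only bounds the choices for that intersection when the subgroup is amenable, so your reading of the conclusion as ``countably many amenable subgroups'' is correct, as is your $F_2$ counterexample to the literal wording. Your second variant, where $N$ has countably many subgroups outright, is the form the paper actually uses when it applies the lemma to axis and end stabilizers with cyclic kernel in the proof of Proposition~\ref{prop:countableSubam}.
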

\begin{proof}
	Write $\pi \colon G \to G/N$ the projection. A subgroup $H$ of $G$ is determined by $H \cap N$ and any section of a finite generating set of $\pi(H)$. There are always at most countably many options for these sections, and  if $H$ is amenable then there are at most countably many options for $H \cap N$.
\end{proof}

\begin{proof}[Proof of Proposition \ref{prop:countableSubam}]
	We first prove the forward implication. If $G = \mathrm{BS}(\pm 1,m)$ then $G$ has countably many amenable subgroups by \cite[Corollary 8.4]{BeckerLubotzkyThom2019}. If $n = \pm m$, then $G$ is virtually $F_n \times \Z$. The group $F_n \times \Z$ has countably many amenable subgroups by Lemma \ref{lema:countablesub}, and the same is true for $G$ by Lemma \ref{lema:countablesub} again.
	
	Now suppose that $m/n$ and $n/m$ are not integers. Let $\cT$ be the Bass-Serre tree associated to the HNN-extension decomposition of $G$ as $G = \langle a,t \mid ta^m t^{-1} = a^{n}\rangle$ and let $H \subseteq G$ be an amenable subgroup. Since $H$ has no non-abelian free subgroups and the action of $G$ on $\cT$ has no inversions, by J. Tits' categorization of actions of groups on trees \cite{Tits1970} either $H$ fixes a vertex, stabilizes the axis of a loxodromic element in $H$ or fixes an end of $\partial \cT$. In the first case $H$ is cyclic. In the second case, the stabilizer of the axis $\ell$ of a loxodromic element in $G$ splits as
	\[
	1 \longrightarrow K \longrightarrow \mathrm{Stab}_G(\ell) \longrightarrow D_\infty \longrightarrow 1
	\]
	where $D_\infty$ is the infinite dihedral group and the kernel $K$ is a subgroup of a vertex stabilizer (and hence either trivial or infinite cyclic). In any case, by Lemma \ref{lema:countablesub} the countably many subgroups $\mathrm{Stab}_G(\ell)$ where $\ell$ runs through the axes of loxodromic elements of $G$ have countably many subgroups.
	
	We are reduced to the case when $H$ fixes an end $\xi \in \partial \cT$. Fix a one-sided geodesic $(\xi_n)_{n \geq 0}$ representing $\xi$, and define $K \subseteq G$ as the subgroup of elliptic elements fixing $\xi$. Then 
	\[
	K = \bigcup_{m \geq 0} \mathrm{Fix}_G( \xi_{[m, \infty)} )
	\]
	is the ascending union of the pointwise fixators of the rays $\xi_{[m, \infty)} = (\xi_{m + n})_{n \geq 0},\, m \geq 0$. Thus $K$ is either trivial, infinite cyclic or a direct union of infinite cyclic groups. In the last case, there is an element of $K \setminus \{e_G\}$ with infinitely many roots.
	
	\begin{claim}[\cite{CornulierMSE2021}]
		If $m/n$ and $n/m$ are not integers, then $G = \mathrm{BS}(m,n)$ contains no non-trivial element with roots of arbitrarily large order.
	\end{claim}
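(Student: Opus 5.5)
The plan is to treat separately the cases where the element $g$ is loxodromic and where it is elliptic for the action of $G$ on the Bass-Serre tree $\cT$. In both cases we bound the order $k$ of any root $x$ of $g$, i.e.\ any $x\in G$ with $x^k=g$, by a constant depending only on $g$, $m$ and $n$.

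\emph{Loxodromic case.} Suppose $g\neq e_G$ is loxodromic with translation length $\tau(g)\geq 1$, and $x^k=g$. Then $x$ cannot be elliptic, since powers of elliptic elements are elliptic. So $x$ is loxodromic and $\tau(g)=k\,\tau(x)\geq k$, because the action has no inversions and translation lengths are positive integers. Hence $k\leq \tau(g)$.

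\emph{Elliptic case: reduction to powers of $a$.} Suppose $g\neq e_G$ is elliptic and $x^k=g$. By the same remark, $x$ is elliptic. Conjugating, we may assume $g=a^s$ with $s\neq 0$. Then $x=v a^r v^{-1}$ for some $v\in G$ and $r\neq 0$, which gives
\[
a^s = v a^{rk} v^{-1}.
\]
So $a^s$ fixes both the base vertex $o=\langle a\rangle$ and the vertex $v\langle a\rangle$, and hence fixes the geodesic $o=u_0,u_1,\dots,u_L=v\langle a\rangle$ pointwise. We track how $a^s$ is written in each vertex stabilizer along this path. At $u_i=v_i\langle a\rangle$ we have $a^s=v_i a^{s_i}v_i^{-1}$ with $s_0=s$ and $s_L=rk$, where the last equality uses that elements of $\langle a\rangle$ are determined by their exponent. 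Crossing an edge means passing through an edge stabilizer, which is a conjugate of $\langle a^m\rangle$ seen from one endpoint and of $\langle a^n\rangle=t\langle a^m\rangle t^{-1}$ from the other. By the relation $ta^mt^{-1}=a^n$, each step therefore multiplies the exponent by $n/m$ or by $m/n$, and the exponent stays an integer. Consequently
\[
rk = s\,(n/m)^e \quad \text{for some } e\in\Z .
\]
\emph{Elliptic case: bounding $e$.} Since $m\nmid n$, there is a prime $p$ with $d_p:=v_p(m)-v_p(n)>0$. Since $n\nmid m$, there is a prime $q$ with $d_q:=v_q(n)-v_q(m)>0$. Integrality of $rk$ gives
\[
v_p(rk)=v_p(s)-e\,d_p\geq 0 \quad\text{and}\quad v_q(rk)=v_q(s)+e\,d_q\geq 0 .
\]
Hence $-v_q(s)/d_q\leq e\leq v_p(s)/d_p$. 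Then $k\leq |rk|=|s|\,|n/m|^{e}$ is bounded by a constant depending only on $s,m,n$.

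The step needing the most care is the exponent bookkeeping along the geodesic. One must verify that at each vertex the fixed element lies in the vertex stabilizer $v_i\langle a\rangle v_i^{-1}$. One must also check that passing to the adjacent vertex, whose stabilizer is $v_ia^lt\langle a\rangle t^{-1}a^{-l}v_i^{-1}$ or $v_ia^lt^{-1}\langle a\rangle t a^{-l}v_i^{-1}$, changes the exponent precisely by the factor $n/m$ or $m/n$. This is done by writing the element as a conjugate of a power of $a^m$, respectively $a^n$, and applying $ta^mt^{-1}=a^n$. The key point is that the prime-valuation argument only uses the net exponent $e$ of the ratios, not the length $L$ of the path, so no control of the path itself is required.
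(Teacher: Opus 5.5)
Your proof is correct and follows the paper's argument. The loxodromic case is excluded by translation lengths; in the elliptic case you track the exponent $\zeta_g$ of the element in the vertex stabilizers along its fixed subtree, where each edge multiplies it by $(m/n)^{\pm 1}$. Integrality then bounds the net height change, hence $|\zeta_g|$, and this bounds the order of any root fixing a vertex there. Your $p$- and $q$-adic valuation computation simply makes explicit the paper's terse step that $h$ and $\zeta_g$ are bounded on $\cT_g$.
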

	\begin{proof}[Proof of the claim]
		Let $h \colon G/ \langle a \rangle \to \Z$ the \emph{height function} associating to each coset $g \langle a \rangle$ the signed number of $t$'s appearing in $g$. The function $h$ is well defined (it factors through the quotient of $G$ by the normal subgroup generated by $a$).
		
		Let $g \in G \setminus \{e_G\}$ that fixes a vertex $v = w \langle a \rangle$ in $\cT$, so $g = wa^{\zeta_g(v)} w^{-1}$ for a unique $\zeta_g(v) \in \Z \setminus \{0\}$. If $g$ also fixes a vertex $v' = wa^lt \langle a \rangle$ in $\cT$ that is adjacent to $v$ (that is, there is an oriented edge from $v$ to $v'$ in $\cT$), the equality
		\[
		wa^{\zeta_g(v)} w^{-1} = wa^l t a^{\zeta_g(v')} (wa^l t)^{-1}
		\]
		implies that $\zeta_g(v') = (m/n)\zeta_g(v)$. By induction we see that for any pair of vertices $v,v'$ in the subtree $\cT_g$ of fixed points of $g$ in $\cT$ we have $\zeta_g(v') = (m/n)^{h(v') - h(v)} \zeta_g(v)$. Since no non-trivial power of $m/n$ is an integer and the $\zeta_g(v)$ are always integers, we deduce that $h$ and $\zeta_g$ are bounded on $\cT_g$.
		
		Now assume that $g \in G \setminus \{e_G\}$ has infinitely many roots, and let $M > 0$ be an upper bound for $\{\zeta_g(v) : v \in G/\langle a \rangle\}$. The action of $g$ on $\cT$ cannot be loxodromic, since the roots of $g$ would have arbitrarily small translation length. Thus $g$ and any of its roots are elliptic. Now let $u \in G$ with $u^N = g$ for some $N \in \N_+$, and let $v'$ be a vertex of $\cT$ fixed by $u$. Then
		\[
		N \leq N \abs{\zeta_u(v')} = \abs{\zeta_g(v')} \leq M. \qedhere
		\]
	\end{proof}
	
	Thus $K$ must be cyclic. If $\xi$ is fixed by no loxodromic element of $G$ then $H \subseteq \mathrm{Stab}_G(\xi) = K$, so there are countably many options for $H$.
	
	If not, let $g \in G$ be a loxodromic element with minimal translation length $d \in \N_+$. Then $\mathrm{Stab}_G(\xi) = K \rtimes \Z$, where the generator of the right-hand side is $g$ and conjugation by $g$ sends every $\mathrm{Fix}_G(\xi_{[m, \infty)} )$ to $\mathrm{Fix}_G(\xi_{[m + d, \infty)})$. Again, Lemma \ref{lema:countablesub} shows that $\mathrm{Stab}_G(\xi)$ has countably many subgroups, and since there are countably many fixed ends in $\partial \cT$ by loxodromic elements of $G$ we conclude that $H$ varies in a countable set. This finishes the proof that $\mathrm{BS}(m,n)$ has countably many amenable subgroups.
	
	For the reverse implication, consider $m,n \in \Z \setminus \{0\}$ not satisfying any of the conditions in the statement of the proposition. Since $\mathrm{BS}(m, n)$ is isomorphic to $\mathrm{BS}(-m,-n)$ we may assume that $m > 0$, and since $\mathrm{BS}(m,n)$ contains $\mathrm{BS}(m^2, n^2)$ (see \cite[Theorem 1.3]{Levitt2015} for instance) we may assume that $n > 0$ too. Now $m \geq 2$ and we may write $n = km$ with $k \geq 2$.
	
	As before, let $\cT$ be the Bass-Serre tree associated to the HNN-extension decomposition of $G$. Let $\widetilde{\cT}$ be the subtree of $\cT$ with vertex set $\{wt^{-1} \langle a \rangle : w \text{ is a finite word in }\{a, t^{-1}\} \}$. The tree $\widetilde{\cT}$ is a rooted complete $m$-ary tree with root $\langle a \rangle$.
	
	\begin{claim}
		The fixed points of $a^m$ in $\cT$ are the vertices of $\widetilde{\cT}$.
	\end{claim}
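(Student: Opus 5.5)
The plan is to compute the fixed subtree $\cT_{a^m}$ directly, using the local rule from the proof of Cornulier's claim. For a nontrivial elliptic $g$ fixing $v=w\langle a\rangle$, write $g=wa^{\zeta_g(v)}w^{-1}$. Everything reduces to a Britton-type divisibility fact in $G=\mathrm{BS}(m,n)$:
\[
t^{-1}a^{c}t\in\langle a\rangle \iff n\mid c, \qquad ta^{c}t^{-1}\in\langle a\rangle \iff m\mid c .
\]
In those cases the conjugates are $a^{cm/n}$ and $a^{cn/m}$ respectively. I would first record this fact from Britton's lemma for the HNN-extension. From it I get a local criterion.

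\emph{Local criterion.} Suppose $g$ fixes $v=w\langle a\rangle$ with $\zeta_g(v)=c$.
\begin{itemize}
\item The out-neighbour $wa^lt\langle a\rangle$ is fixed iff $n\mid c$, and then $\zeta_g=cm/n$ there.
\item The in-neighbour $wa^lt^{-1}\langle a\rangle$ is fixed iff $m\mid c$, and then $\zeta_g=cn/m=kc$ there.
\end{itemize}
Both conditions are independent of $l$. Since $\cT_{a^m}$ is connected and contains the root $\langle a\rangle$, where $\zeta_{a^m}=m$, I would determine it by exploring outward from the root with this rule, by induction on the distance to the root.

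\emph{Inclusion $\widetilde{\cT}\subseteq\cT_{a^m}$.} I would prove by induction that a vertex of $\widetilde{\cT}$ at depth $d$ (reached by $d$ letters $t^{-1}$) is fixed with $\zeta=k^dm$. The inductive step uses only that $m\mid k^dm$, so all $m$ in-neighbours are fixed, with $\zeta=k^{d+1}m$.

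\emph{Reverse inclusion.} I would show that no vertex outside $\widetilde{\cT}$ is fixed. At the root, $\zeta=m$ is not divisible by $n=km$ because $k\ge2$, so no out-neighbour of the root is fixed. This is the key use of $n/m\in\Z_{\ge 2}$.

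The main obstacle is the out-neighbours of deeper vertices of $\widetilde{\cT}$. At depth $d\ge1$ we have $\zeta=k^dm$, which is divisible by $n$, so the local criterion seems to allow other out-neighbours to be fixed as well, not just the parent. For instance, a direct check gives $(t^{-1}a^jt)^{-1}a^m(t^{-1}a^jt)=a^m$, so $a^m$ fixes the vertices $t^{-1}a^jt\langle a\rangle$ for every $j$. I would therefore first test the claim on this example. Either these vertices coincide with vertices of $\widetilde{\cT}$ under the paper's identification of $\widetilde{\cT}$, or the set $\widetilde{\cT}$ should be read as the full connected component generated this way. In the latter case the correct description is the subtree of vertices reachable from the root by paths whose $\zeta$-labels never fall below $m$. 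This is equivalent to height at most $0$ together with the divisibility constraint.

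In either case, the proof structure is the inductive propagation of $\zeta$ via the local criterion above. It would be completed by the observation that, above height $0$, $\zeta$ would have to equal $m\cdot(m/n)^{h}$ with $h>0$, which is not an integer.
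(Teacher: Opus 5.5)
Your test case is correct, and it settles the matter: the claim is false as stated, so you should say so rather than hedge. The first alternative you offer cannot occur. The height $h$ (signed number of $t$'s) is well defined on $G/\langle a\rangle$, and $G$ acts on it by translation. Every vertex $wt^{-1}\langle a\rangle$ of $\widetilde{\cT}$ other than the root has height at most $-1$. By contrast, $t^{-1}a^jt\langle a\rangle$ has height $0$, and it is not the root when $n\nmid j$, because $t^{-1}a^jt\notin\langle a\rangle$ by Britton's lemma. For instance, in $\mathrm{BS}(2,4)$ the element $a^2$ fixes $t^{-1}at\langle a\rangle$, which is not in $\widetilde{\cT}$.

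The paper's proof fails exactly where you located the difficulty. It deduces $\cT_{a^m}\subseteq\widetilde{\cT}$ from the fact that $a^m$ fixes no out-neighbour of the root. That only stops the fixed tree from leaving $\widetilde{\cT}$ at the root. At a vertex of depth $d\ge 1$ one has $\zeta=k^dm$, which is divisible by $n$, so all $n$ of its out-neighbours are fixed, and only one of them (the parent) lies in $\widetilde{\cT}$. Your proof of $\widetilde{\cT}\subseteq\cT_{a^m}$ is fine; it is the same computation as the paper's identity $a^mwt^{-1}=wt^{-1}a^{k^lm}$.

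Your second reading gives the true statement: $\cT_{a^m}$ is the set of vertices $v$ such that the geodesic from $\langle a\rangle$ to $v$ stays at height $\le 0$. However, your closing justification of the height bound is wrong. The quantity $m(m/n)^h=m/k^h$ can be an integer (take $m=2$, $n=4$, $h=1$), so the integrality argument from Cornulier's claim does not transfer to this case. Use your divisibility criterion instead. On $\cT_{a^m}$ one has $\zeta=mk^{-h}$, so every fixed vertex of height $0$ has $\zeta=m$. Since $n=km\nmid m$, none of its out-neighbours is fixed, and connectivity of $\cT_{a^m}$ then gives the description above.

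This corrected statement is all that the proof of Proposition~\ref{prop:countableSubam} actually needs. Suppose $u\langle a\rangle\in\xi_1\setminus\xi_2$. Then $ua^mu^{-1}$ fixes the tail of $\xi_1$ beyond $u\langle a\rangle$, since its $u^{-1}$-image is a descending ray from the root and hence lies in $\widetilde{\cT}$. On the other hand, if this elliptic element fixed the end $\xi_2$, it would fix the ray from $u\langle a\rangle$ to $\xi_2$ pointwise. That ray passes through the branch point of $\xi_1$ and $\xi_2$, which lies strictly higher than $u\langle a\rangle$, and this contradicts the height bound.
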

	\begin{proof}[Proof of the claim]
		The element $a^m$ fixes $\langle a \rangle$ and does not fix any of its neighbors
		\[
		t\langle a \rangle, at \langle a \rangle, a^2t\langle a \rangle, \cdots, a^{km-1}t \langle a \rangle
		\]
		because $k \geq 2$, so the subtree $\cT_{a^m}$ of fixed points of $a^m$ is contained in $\widetilde{\cT}$. Now let $wt^{-1} \langle a \rangle$ be a vertex of $\widetilde{\cT}$, and write $w = a^{n_1}t^{-1} a^{n_2} t^{-1} \cdots a^{n_l}$ for some $l \in \N_+$ and $n_1, \ldots, n_l \in \N$, so
		\begin{align*}
			a^mw t^{-1} & = a^m a^{n_1}t^{-1} a^{n_2} t^{-1} \cdots a^{n_l}t^{-1} = a^{n_1} t^{-1}a^{km} a^{n_2}t^{-1} \cdots a^{n_l}t^{-1} \\
			& = a^{n_1} t^{-1}a^{n_2}t^{-1}a^{k^2m} \cdots a^{n_l}t^{-1} = \cdots = a^{n_1} t^{-1}a^{n_2}t^{-1} \cdots a^{n_l}t^{-1}a^{k^l m}
		\end{align*}
		and thus $a^m(w t^{-1} \langle a \rangle) = w t^{-1} \langle a \rangle$. We conclude that $\cT_{a^m} = \widetilde{\cT}$.
	\end{proof}
	
	Let $\xi_1, \xi_2$ be distinct ends of $\widetilde{\cT}$ and identify them with geodesics in $\widetilde{\cT}$ starting at $\langle a \rangle$. Let $gt^{-1} \langle a \rangle$ be a vertex of $\xi_1 \setminus \xi_2$, so $(gt^{-1}) a^m(gt^{-1})^{-1}$ fixes $\xi_1$ and does not fix $\xi_2$. Thus the many amenable subgroups $\mathrm{Stab}_G(\xi), \xi \in \partial \widetilde{\cT}$ are pairwise distinct, and there are uncountably many subgroups of this type since $m \geq 2$. This finishes the proof of the proposition.
\end{proof}

\subsection{Property (CS)}
A countable group $G$ is said to have \emph{property (CS)} if, for every unitary representation $\pi \colon G \to B(\mathcal{H})$ that is weakly contained in the left-regular representation $\lambda$ of $G$, there exists an neighborhood $U$ of $\mathrm{id}_{\mathcal{H}}$ in the strong operator topology of $B(\mathcal{H})$ such that $\pi^{-1}(U)$ is contained in the amenable radical $R_a(G)$. Recall that $\pi$ is \emph{weakly contained in $\lambda$} if $\norm{\pi(f)} \leq \norm{\lambda(f)}$ for all $f \in \C[G].$ This notion was introduced in \cite{BreuillardKalantarKennedyOzawa2017} as a sufficient condition for a group $G$ to satisfy a conjecture of Connes-Sullivan on subgroups of connected Lie groups acting amenably on homogeneous spaces, proved by R. Zimmer \cite{Zimmer1978}. Linear groups and discrete groups $G$ with $H^2_\mathrm{b}(G, \ell^2(G/R_a(G)) \neq 0$ are known to have property (CS) \cite[Theorems 8.4 \& 8.6]{BreuillardKalantarKennedyOzawa2017}.

Property (CS) is stronger than C*-simplicity: every group with property (CS) and trivial amenable radical is C*-simple \cite[Proposition 8.2]{BreuillardKalantarKennedyOzawa2017}. Indeed, a discrete group with no non-trivial finite normal subgroups and no amenable normalish subgroups is C*-simple \cite[Theorem 6.2]{BreuillardKalantarKennedyOzawa2017}, while property (CS) implies the absence of amenable normalish subgroups. To see this, it suffices to note that if a subgroup $H \subseteq G$ is amenable and normalish, then the quasi-regular representation $\lambda_{G/H}$ is weakly contained in the left regular representation, yet the image of $G$ under $\lambda_{G/H}$ is not discrete in the strong operator topology. Combined with Theorem \ref{thm: normalish}, this yields the following.

\begin{cor}
	Let $G$ be a countable group with finite amenable radical and suppose that there exists a non-degenerate $\mu \in \mathrm{Prob}(G)$ which admits an amenable boundary $\mu$-SRS distinct from a Dirac mass on a finite normal subgroup of $G$. Then $G$ does not have property (CS).
\end{cor}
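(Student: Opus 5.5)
We argue by contraposition, combining Theorem~\ref{thm: normalish} with the observation made just before the statement that property (CS) rules out amenable normalish subgroups. Suppose $G$ has finite amenable radical $R_a(G)$ and property (CS), and let $\eta$ be an amenable boundary $\mu$-SRS for some non-degenerate $\mu$ that is not a Dirac mass on a finite normal subgroup. Theorem~\ref{thm: normalish} applies directly: $(\mathrm{Sub}(G),\eta)$ is a $\mu$-boundary (supported on $\subam{G}$), so $\eta$ is supported on normalish subgroups. Since $\eta$ is supported on amenable subgroups, $\eta$-almost every $H$ is an amenable normalish subgroup. In particular $G$ has at least one amenable normalish subgroup $H$.

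The key step is to show that property (CS) forbids such an $H$ when $R_a(G)$ is finite. We would expand the argument sketched in the paragraph above the statement. First, $H$ amenable implies that $\lambda_{G/H}$ is weakly contained in $\lambda_G$, by the standard fact that quasi-regular representations induced from amenable subgroups are weakly contained in the regular representation. Property (CS) then gives a strong-operator neighbourhood $U$ of the identity with $\lambda_{G/H}^{-1}(U)\subseteq R_a(G)$. We may take $U$ determined by finitely many vectors $\delta_{z_1H},\dots,\delta_{z_kH}$ and a tolerance below $\sqrt2$. Then any $g$ fixing each coset $z_iH$ lies in $\lambda_{G/H}^{-1}(U)$, that is, $\bigcap_{i} z_iHz_i^{-1}\subseteq R_a(G)$. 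Normalish-ness says this intersection is infinite, whereas $R_a(G)$ is finite. This contradiction is exactly the non-discreteness of $\lambda_{G/H}(G)$ invoked in the text.

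The only delicate point is matching hypotheses. Theorem~\ref{thm: normalish} excludes precisely Dirac masses on finite normal subgroups, and that is the exclusion in the corollary. Also, finiteness of $R_a(G)$, rather than triviality, is what makes the contradiction work, since normalish subgroups have infinite intersections. If an atomic $\eta$ concentrates on an infinite normal amenable subgroup, then that subgroup is itself normalish and amenable, so the same contradiction applies; the case is in any event covered by Theorem~\ref{thm: normalish}. I expect the main obstacle to be justifying the weak containment $\lambda_{G/H}\prec\lambda_G$ cleanly. We would cite it as a standard consequence of amenability of $H$, via $1_H\prec\lambda_H$ and continuity of induction.
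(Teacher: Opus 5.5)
Your argument is correct and follows the paper's route: Theorem~\ref{thm: normalish} forces $\eta$ to be carried by amenable normalish subgroups, and property (CS) with finite $R_a(G)$ rules these out, because $\lambda_{G/H}\prec\lambda$ while $\lambda_{G/H}^{-1}(U)$ always contains an infinite intersection $\bigcap_i z_iHz_i^{-1}$. Your expansion of the paper's one-line ``non-discreteness'' remark is more precise than the original; the only step that needs a further word is the reduction to neighbourhoods defined by finitely many $\delta_{zH}$, which holds because on the unitary group the strong operator topology is determined by an orthonormal basis (approximate arbitrary vectors by finitely supported ones).
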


In fact, the existence of an amenable boundary $\mu$-SRS $\eta$ allows us to explicitly construct a unitary representation of $G$ that witnesses the failure of property (CS). Following the notation of \cite[Section F.5]{BekkaDeLaHarpe2020}, let us define the direct integral $\mathcal{H}_{\Pi}$ of $(\ell^2(G/H))_{H \in \mathrm{Sub}_\mathrm{am}(G)}$ as the Hilbert space \[\mathcal{H}_\Pi = \int_{\mathrm{Sub}_\mathrm{am}(G)} \ell^2(G/H) \dd\eta(H),\] that is, $\mathcal{H}_\Pi$ consists of all maps $H \in \mathrm{Sub}_\mathrm{am}(G) \mapsto v_H \in \ell^2(G/H)$ such that \[\int_{\mathrm{Sub}_\mathrm{am}(G)} \norm{v_H}^2_{\ell^2(G/H)} \dd \eta(H) < \infty\] and such that $H \in \mathrm{Sub}_\mathrm{am}(G) \mapsto (v_H)_{gH}$ is measurable for every $g \in G$. Then, $\mathcal{H}_\Pi$ is a Hilbert space with inner product \[ \langle v, w \rangle = \int_{\mathrm{Sub}_\mathrm{am}(G)} \langle v_H, w_H \rangle_{\ell^2(G/H)} \dd\eta(H)\] for every $v = (v_H)_H$, $w = (w_H)_H \in \mathcal{H}_\Pi$. 

\begin{defn}\label{def: direct integral representations}
	We define the direct integral of the family of all quasi-regular representations $(\lambda_{G/H})_{H \in \mathrm{Sub}_\mathrm{am}(G)}$ as the unitary representation $\Pi : G \to B(\mathcal{H}_\Pi)$ defined by $(\Pi(g)v)_H = \lambda_{G/H}(g)v_H$ for every $g \in G$, $v \in \mathcal{H}_\Pi$ and $H \in \mathrm{Sub}_\mathrm{am}(G)$. Define $\pi$ to be the cyclic subrepresentation of $\Pi$ generated by the vector $v = (\delta_H)_{H \in \mathrm{Sub}_\mathrm{am}(G)} \in \mathcal{H}_\Pi$.
\end{defn}

Every quasi-regular representation $\lambda_{G/H}$ is weakly contained in the left-regular representation $\lambda$, since $H \in \mathrm{Sub}_\mathrm{am}(G)$. Therefore, $\pi$ is weakly contained in $\lambda$. The next result shows that such representation provides a concrete witness showing that the group $G$ cannot satisfy property (CS).

\begin{prop}\label{prop: explicit CS}
	Let $G$ be a countable group with finite amenable radical and suppose that there exists a non-degenerate $\mu \in \mathrm{Prob}(G)$ which admits an amenable boundary $\mu$-SRS distinct from a Dirac mass on a finite normal subgroup of $G$. 
	
	Let $\pi$ be the cyclic subrepresentation of the direct integral representation $\Pi$ generated by the vector $v = (\delta_H)_{H \in \mathrm{Sub}_\mathrm{am}(G)}$ (Definition \ref{def: direct integral representations}). Then, the identity operator in $B(\mathcal{H}_{\pi})$ is not isolated in the strong operator topology.
\end{prop}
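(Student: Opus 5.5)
Let $\eta$ be the given amenable boundary $\mu$-SRS, which is distinct from a Dirac mass on a finite normal subgroup. The plan is to exhibit a sequence $h_k\in G$, leaving every finite set, with $\pi(h_k)\to \mathrm{id}$ strongly on $\mathcal{H}_\pi$. Since the amenable radical is finite, the $h_k$ eventually lie outside it. Because the set of operators moving a vector only a little is strongly open, this shows that $\mathrm{id}$ is not isolated in $\pi(G)$ (or in the relevant sense that $\pi^{-1}(U)$ is never contained in $R_a(G)$).

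The key computation is the matrix coefficient of the cyclic vector $v=(\delta_H)_H$:
\[
\langle \Pi(g)v,v\rangle=\int \langle \delta_{gH},\delta_H\rangle\dd\eta(H)=\eta(\{H: g\in H\}).
\]
More generally, for vectors $\Pi(f_1)v,\Pi(f_2)v$ with $f_i\in\C[G]$, we have
\[
\langle \Pi(x)\Pi(f_1)v,\Pi(f_2)v\rangle=\sum_{s,t} f_1(s)\overline{f_2(t)}\,\eta(\{H:t^{-1}xs\in H\}).
\]
So it suffices to find $h_k\to\infty$ such that, for every fixed finite family $g_1,\dots,g_m\in G$, we have $\eta(\{H: g_i^{-1}h_kg_i\in H\ \forall i\})\to1$. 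Then $\|\Pi(h_k)\Pi(g_i)v-\Pi(g_i)v\|^2=2-2\,\mathrm{Re}\,\eta(\{H: g_i^{-1}h_kg_i\in H\})\to0$ for each $i$. Since the span of $\{\Pi(g)v\}$ is dense in $\mathcal{H}_\pi$ and the operators are unitary, this gives strong convergence $\pi(h_k)\to\mathrm{id}$.

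To produce such $h_k$, reuse the proof of Theorem~\ref{thm: normalish}. Since $\eta$ is a boundary and not a Dirac mass on a finite normal subgroup, either $\eta$ is a Dirac mass on an infinite normal amenable subgroup, or it is non-atomic. The first case is impossible because $R_a(G)$ is finite. In the non-atomic case, the Claim there gives $h$ with infinite conjugacy class and $\eta(\{H:h\in H\})>0$. Pick a trajectory $\mathbf w\in E$. Then $\eta(\{H: w_n^{-1}hw_n\in H\})\to1$ by \eqref{eq:boundaryCvg}. Set $h_n=w_n^{-1}hw_n$, choosing an injective subsequence as in that proof, which is possible since $h$ has infinite conjugacy class; this makes $h_n\to\infty$.

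To pass from a single conjugate to the finitely many conjugates $g_i^{-1}h_ng_i$, use non-singularity exactly as in \eqref{eq:csinequality}: $\eta(\{H: g_i^{-1}h_ng_i\in H\})=(g_i)_*\eta(\{H:h_n\in H\})$, and the event $\{H:h_n\in H\}$ has $\eta$-measure tending to $1$. Absolute continuity of $(g_i)_*\eta$ with respect to $\eta$ forces its measure to tend to $1$ as well. A diagonal argument over an enumeration of $G$ then handles all finite families simultaneously.

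The main obstacle is this last step: making precise that a set of $\eta$-measure tending to $1$ also has $(g_i)_*\eta$-measure tending to $1$. This requires the uniform absolute continuity argument (the choice of $\delta_j$ in the proof of Theorem~\ref{thm: normalish}). It also requires interpreting "not isolated" correctly, namely as the existence of a net $\pi(h_k)\neq \mathrm{id}$ converging to $\mathrm{id}$. For that I would check that $\pi(h_k)\ne\mathrm{id}$ for infinitely many $k$: otherwise the $h_k$ lie in the kernel of $\pi$, which (since $\langle\pi(x)v,v\rangle=1$ forces $x\in H$ for $\eta$-a.e. $H$) would give infinitely many elements in $\eta$-a.e. $H$ and in all its conjugates, yielding a large normal subgroup contained in amenable groups. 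Ruling this out uses finiteness of $R_a(G)$, since that subgroup would be normal and amenable.
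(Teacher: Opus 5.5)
Your proposal is correct and follows essentially the paper's argument. Both use the matrix coefficient $\langle\Pi(g)v,v\rangle=\eta(\{H: g\in H\})$, the element $h$ of infinite conjugacy class from the proof of Theorem~\ref{thm: normalish}, the convergence \eqref{eq:boundaryCvg}, and quasi-invariance of $\eta$ to pass to the conjugates $g_i^{-1}h_ng_i$. The step you flag as the main obstacle is just absolute continuity; by stationarity one even has $g_*\eta\le \mu^{\ast n}(g)^{-1}\eta$. No diagonal argument is needed, since one sequence works for every $g$.

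Your extra check that $\pi(h_k)\neq\mathrm{id}$ is something the paper leaves implicit; it only records $g\neq e_G$. That check uses that $\ker\pi$ is a normal subgroup contained in $\eta$-a.e.\ $H$, hence amenable and inside the finite amenable radical. It can be shortened: each $h_k$ is conjugate to $h$, so it has infinite conjugacy class and cannot lie in a finite normal subgroup. This also makes the injective-subsequence step unnecessary.
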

\begin{proof}
	We show that, for every SOT-neighborhood $U$ of the identity in $B(\mathcal{H}_{\pi})$, there exists an element $g \in G \setminus \{e_G\}$ such that $\pi(g) \in U$.  Since the representation $\pi$ is generated by $v$, it suffices to verify this for neighborhoods of the form
	\[
	U = \{ T \in B(\mathcal{H}_{\pi}) : \norm{\pi(g_i^{-1}) T \pi(g_i) v - v}_{\mathcal{H}_{\pi}} < \varepsilon \text{ for all } i=1, \dots, m\}
	\]
	for every $\varepsilon >0$ and $g_1, \dots, g_m \in G$.
	
	Now,one can follow the proof of Theorem \ref{thm: normalish} for $j=2$. Indeed, the convergence from Equation \eqref{eq:boundaryCvg} and the non-singularity of the action $G \curvearrowright \mathrm{Sub}_\mathrm{am}(G)$ together imply that there exists $k \in \N$ and $(w_n)_{n \geq 0} \in G^\N$ such that $\eta(\{ H \in \mathrm{Sub}_\mathrm{am}(G) : w_k h w_k^{-1} \in H\}) \geq 1-\delta$, where $\delta > 0$ is chosen in such a way that
	\begin{align*}
		(g_i)_* \eta(\{ H \in \mathrm{Sub}_\mathrm{am}(G) : w_k^{-1}h w_k \in H \}) & = \eta(\{ H \in \mathrm{Sub}_\mathrm{am}(G) : w_k^{-1} h w_k \in g_i H g_i^{-1}\}) \\ & \geq 1-\varepsilon^2 / 2m
	\end{align*}
	for every $i=1, \ldots, m$. Set $g \coloneqq w_k^{-1} h w_k \in G \setminus \{e_G\}$. A union bound then shows that
	\[
	\eta(\{ H \in \mathrm{Sub}_\mathrm{am}(G) : g_i^{-1}gg_i \in H\}) > 1-\varepsilon^2 / 2.
	\]
	We obtain
	\begin{align*}
		\norm{\pi(g_i^{-1}gg_i)v-v}^2_{\mathcal{H}_\pi} &= \int_{\mathrm{Sub}_\mathrm{am}(G)} \norm{\lambda_{G/H}(g_i^{-1} g g_i) \delta_H - \delta_H}^2_{\mathcal{H}_{\pi}} \dd\eta(H) \\
		&= 2 \eta(\{ H \mid g_i^{-1}gg_i \notin H \}) \leq \varepsilon^2
	\end{align*}
	for every $i=1, \dots, m$, which finally shows that $\pi(g) \in U$. 
\end{proof}

	\bibliographystyle{alpha}
	\bibliography{newbiblio}
\end{document}